\newtheorem{assumption}[theorem]{Assumption}
\newtheorem{property}[theorem]{Property}
\title{A Spline Chaos Expansion
\thanks{This work was supported by the U.S. National Science Foundation under Grant Number CMMI-1607398.}}
\author{Sharif Rahman\thanks{College of Engineering and Applied Mathematics \& Computational Sciences, The University of Iowa, Iowa City, IA 52242 (\email{sharif-rahman@uiowa.edu}). Questions, comments, or corrections
to this document may be directed to that email address.}}
\begin{document}

\maketitle
\newcommand{\slugmaster}{%
\slugger{juq}{xxxx}{xx}{x}{x--x}}%slugger should be set to juq, siads, sifin, or siims

\begin{abstract}
A spline chaos expansion, referred to as SCE, is introduced for uncertainty quantification analysis.  The expansion provides a means for representing an output random variable of interest with respect to multivariate orthonormal basis splines (B-splines) in input random variables.  The multivariate B-splines are built from a whitening transformation to generate univariate orthonormal B-splines in each coordinate direction, followed by a tensor-product structure to produce the multivariate version. SCE, as it stems from compactly supported B-splines, tackles locally prominent responses more effectively than the polynomial chaos expansion (PCE).  The approximation quality of the expansion is demonstrated in terms of the modulus of smoothness of the output function, leading to the mean-square convergence of SCE to the correct limit. Analytical formulae are proposed to calculate the mean and variance of an SCE approximation for a general output variable in terms of the requisite expansion coefficients.  Numerical results indicate that a low-order SCE approximation with an adequate mesh is markedly more accurate than a high-order PCE approximation in estimating the output variances and probability distributions of oscillatory, nonsmooth, and nearly discontinuous functions.
\end{abstract}

\begin{keywords}
Uncertainty quantification, B-splines, polynomial chaos expansion, stochastic analysis.
\end{keywords}

\pagestyle{myheadings}
\thispagestyle{plain}
\markboth{S. RAHMAN}{A SPLINE CHAOS EXPANSION}

\section{Introduction}
Uncertainty quantification (UQ) of complex mathematical models is a cross-cutting research topic with broad impacts on engineering and applied sciences \cite{grigoriu02,smith13,sullivan15}. A frequently employed method for UQ analysis entails polynomial chaos expansion (PCE), which describes an infinite series expansion of a square-integrable output random variable in terms of measure-consistent orthogonal polynomials in input random variables \cite{cameron47,ernst12,wiener38}.  The expansion is largely predicated on the smoothness assumption of the output function, because the polynomial basis of PCE is globally supported.  While polynomials have many attractive properties, they possess one undesirable feature: polynomials may oscillate wildly \cite{schumaker07}.  As soon as the expansion degree or order
\footnote{The nouns \emph{degree} and \emph{order} of a polynomial or spline expansion are used synonymously in the paper.}
exceeds four or five, a PCE approximation becomes prone to unstable swings. This is chiefly because polynomials are inflexible if they are too smooth, long heralded as a virtue.  They are analytic, which means that the behavior of a polynomial in an arbitrarily small region determines the behavior everywhere. In the physical world, though, the output function is frequently of a disjointed nature, meaning that the behavior in one region may be completely unrelated to the behavior in another region. In this case, the convergence property of PCE or other polynomial-based methods may become markedly deteriorated. In an effort to enhance the performance of global supported PCE, domain decomposition techniques, such as multi-element formulation of PCE, have been introduced \cite{wan05}.  However, in the presence of large subdomains of discontinuities, the multi-element PCE becomes computationally inefficient, especially when there are many input random variables.  Therefore, alternative UQ methods, proficient in tackling locally pronounced highly nonlinear or nonsmooth output functions, are desirable.

This paper presents a new, alternative orthogonal expansion, referred to as spline chaos expansion or SCE, for UQ analysis subject to independent but otherwise arbitrary probability measures of input random variables.  The paper is structured as follows.  Section 2 starts with mathematical preliminaries and assumptions.  A brief exposition of univariate basis splines (B-splines) is given in Section 3. This is followed by a presentation of orthonormal B-splines, including their second-moment properties, in Section 4.  Section 5 describes the construction of multivariate B-splines and explains how they form an orthonormal basis of a spline space of interest.  Section 6 formally presents SCE for a square-integrable random variable and then demonstrates the convergence and optimality of SCE.  The formulae for the mean and variance of an SCE approximation are derived.   The results from three numerical examples are reported in Section 7. Section 8 discusses future work. Finally, conclusions are drawn in Section 9.

\section{Input random variables}
Let $\mathbb{N}:=\{1,2,\ldots\}$, $\mathbb{N}_{0}:=\mathbb{N} \cup \{0\}$, and $\mathbb{R}:=(-\infty,+\infty)$ represent the sets of positive integer (natural), non-negative integer, and real numbers, respectively.  Denote by $[a_k,b_k]$ a finite closed interval, where $a_k, b_k \in \mathbb{R}$, $b_k>a_k$. Then, given $N \in \mathbb{N}$, $\mathbb{A}^N=\times_{k=1}^N [a_k,b_k]$ represents a closed bounded domain of $\mathbb{R}^N$.

Let $(\Omega,\mathcal{F},\mathbb{P})$ be a probability space, where $\Omega$ is a sample space representing an abstract set of elementary events, $\mathcal{F}$ is a $\sigma$-algebra on $\Omega$, and $\mathbb{P}:\mathcal{F}\to[0,1]$ is a probability measure.  Defined on this probability space, consider an $N$-dimensional input random vector $\mathbf{X}:=(X_{1},\ldots,X_{N})^\intercal$, describing the statistical uncertainties in all system parameters of a stochastic or UQ problem.  Denote by $F_{\mathbf{X}}({\mathbf{x}}):=\mathbb{P}(\cap_{i=1}^{N}\{ X_k \le x_k \})$ the joint distribution function of $\mathbf{X}$. The $k$th component of $\mathbf{X}$ is a random variable $X_k$, which has the marginal probability distribution function $F_{X_k}(x_k):=\mathbb{P}(X_k \le x_k)$.  In the UQ community, the input random variables are also known as basic random variables.  The non-zero, finite integer $N$ represents the number of input random variables and is often referred to as the dimension of the stochastic or UQ problem.

A set of assumptions on input random variables used or required by SCE is as follows.

\begin{assumption}
The input random vector $\mathbf{X}:=(X_{1},\ldots,X_{N})^\intercal$ satisfies all of the following conditions:
\begin{enumerate}[{$(1)$}]

\item
All component random variables $X_k$, $k=1,\ldots,N$, are statistically independent, but not necessarily identically distributed.

\item
Each input random variable $X_k$ is defined on a bounded interval $[a_k,b_k] \subset \mathbb{R}$.  Therefore, all moments of $X_k$ exists, that is, for all $l \in \mathbb{N}_0$,
\begin{equation}
\mathbb{E} \left[ X_k^l \right] :=
\int_{\Omega} X_k^l(\omega) d\mathbb{P}(\omega) < \infty,
\label{2.1}
\end{equation}
where $\mathbb{E}$ is the expectation operator with respect to the probability measure $\mathbb{P}$.

\item
Each input random variable $X_k$ has absolutely continuous marginal probability distribution function $F_{X_k}(x_k)$ and continuous marginal probability density function $f_{X_k}(x_k):={\partial F_{X_k}(x_k)}/{\partial x_k}$ with a bounded support $[a_k,b_k] \subset \mathbb{R}$.  Consequently, with Items (1) and (2) in mind, the joint probability distribution function $F_{\mathbf{X}}({\mathbf{x}})$ and joint probability density function $f_{\mathbf{X}}({\mathbf{x}}):={\partial^N F_{\mathbf{X}}({\mathbf{x}})}/{\partial x_1 \cdots \partial x_N}$ of $\mathbf{X}$ are obtained from
\[
F_{\mathbf{X}}({\mathbf{x}})=\prod_{k=1}^{N} F_{X_k}(x_k)~~\text{and}~~
f_{\mathbf{X}}({\mathbf{x}})=\prod_{k=1}^{N} f_{X_k}(x_k),
\]
respectively, with a bounded support $\mathbb{A}^N \subset \mathbb{R}^N$ of the density function.

\end{enumerate}
\label{a1}
\end{assumption}

Assumption \ref{a1} assures the existence of a relevant sequence of orthogonal polynomials or splines consistent with the input probability measure.  The discrete distributions and dependent variables are not dealt with in this paper.

Given the abstract probability space $(\Omega,\mathcal{F},\mathbb{P})$ of $\mathbf{X}$, there exists an image probability space $(\mathbb{A}^N,\mathcal{B}^{N},f_{\mathbf{X}}d\mathbf{x})$, where $\mathbb{A}^N$ is the image of $\Omega$ from the mapping $\mathbf{X}:\Omega \to \mathbb{A}^N$ and $\mathcal{B}^{N}:=\mathcal{B}(\mathbb{A}^{N})$ is the Borel $\sigma$-algebra on $\mathbb{A}^N \subset \mathbb{R}^N$.  Relevant statements and objects in the abstract probability space have obvious counterparts in the associated image probability space.  Both probability spaces will be exploited in this paper.

\section{Univariate B-splines}
Let $\mathbf{x}=(x_1,\ldots,x_N)$ be an arbitrary point in $\mathbb{A}^N$.  For the coordinate direction $k$, $k=1,\ldots,N$, define a positive integer $n_k \in \mathbb{N}$ and a non-negative integer $p_k \in \mathbb{N}_0$, representing the total number of basis functions and polynomial degree, respectively.  The rest of this section briefly describes paraphernalia of univariate B-splines.

\subsection{Knot sequence}
In order to define B-splines, the concept of knot sequence, also referred to as knot vector by some, for each coordinate direction $k$ is needed.
\begin{definition}
A knot sequence $\boldsymbol{\xi}_k$ for the interval $[a_k,b_k] \subset \mathbb{R}$, given $n_k > p_k \ge 0$, is a non-decreasing sequence of real numbers
\begin{equation}
\begin{array}{c}
\boldsymbol{\xi}_k:={\{\xi_{k,i_k}\}}_{i_k=1}^{n_k+p_k+1}=
\{a_k=\xi_{k,1},\xi_{k,2},\ldots,\xi_{k,n_k+p_k+1}=b_k\},   \\
\xi_{k,1} \le \xi_{k,2} \le \cdots \le \xi_{k,n_k+p_k+1}, \rule{0pt}{0.2in}
\end{array}
\label{3.1}
\end{equation}
where $\xi_{k,i_k}$ is the $i_k$th knot with $i_k=1,2,\ldots,n_k+p_k+1$ representing the knot index for the coordinate direction $k$.  The elements of $\boldsymbol{\xi}_k$ are called knots.
\label{d1}
\end{definition}

According to \eqref{3.1}, the total number of knots is $n_k+p_k+1$.  The knots may be equally spaced or unequally spaced, resulting in a uniform or non-uniform distribution. More importantly, the knots, whether they are exterior or interior, may be repeated, that is, a knot $\xi_{k,i_k}$ of the knot sequence $\boldsymbol{\xi}_k$ may appear $1 \le m_{k,i_k} \le p_k+1$ times, where $m_{k,i_k}$ is referred to as its multiplicity.  The multiplicity has important implications on the regularity properties of B-spline functions. To monitor knots without repetitions, say, there are $r_k$ distinct knots $\zeta_{k,1},\ldots,\zeta_{k,r_k}$ in $\boldsymbol{\xi}_k$ with respective multiplicities $m_{k,1},\ldots,m_{k,r_k}$.  Then the knot sequence in \eqref{3.1} can be expressed more precisely by
\begin{equation}
\begin{array}{c}
\boldsymbol{\xi}_k=\{a_k=\overset{m_{k,1}~\mathrm{times}}{\overbrace{\zeta_{k,1},\ldots,\zeta_{k,1}}},
\overset{m_{k,2}~\mathrm{times}}{\overbrace{\zeta_{k,2},\ldots,\zeta_{k,2}}},\ldots,
\overset{m_{k,r_{k}-1}~\mathrm{times}}{\overbrace{\zeta_{k,r_{k}-1},\ldots,\zeta_{k,r_{k}-1}}},
\overset{m_{k,r_{k}}~\mathrm{times}}{\overbrace{\zeta_{k,r_{k}},\ldots,\zeta_{k,r_{k}}}}=b_k\}, \\
a_k=\zeta_{k,1} < \zeta_{k,2} < \cdots < \zeta_{k,r_k-1} < \zeta_{k,r_k}=b_k,  \rule{0pt}{0.2in}
\end{array}
\label{3.2}
\end{equation}
which consists of a total number of
\[
\sum_{i_k=1}^{r_k} m_{k,i_k} = n_k+p_k+1
\]
knots. A knot sequence is called open if the end knots have multiplicities $p_k+1$.  In this case, definitions of more specific knot sequences are in order.

\begin{definition}
A knot sequence is said to be $(p_k+1)$-open if the first and last knots appear $p_k+1$ times, that is, if
\begin{equation}
\begin{array}{c}
\boldsymbol{\xi}_k=\{a_k=\overset{p_k+1~\mathrm{times}}{\overbrace{\zeta_{k,1},\ldots,\zeta_{k,1}}},
\overset{m_{k,2}~\mathrm{times}}{\overbrace{\zeta_{k,2},\ldots,\zeta_{k,2}}},\ldots,
\overset{m_{k,r_{k}-1}~\mathrm{times}}{\overbrace{\zeta_{k,r_{k}-1},\ldots,\zeta_{k,r_{k}-1}}},
\overset{p_k+1~\mathrm{times}}{\overbrace{\zeta_{k,r_{k}},\ldots,\zeta_{k,r_{k}}}}=b_k\}, \\
a_k=\zeta_{k,1} < \zeta_{k,2} < \cdots < \zeta_{k,r_k-1} < \zeta_{k,r_k}=b_k.  \rule{0pt}{0.2in}
\end{array}
\label{3.3}
\end{equation}
\label{d2}
\end{definition}

\begin{definition}
A knot sequence is said to be $(p_k+1)$-open with simple knots if it is $(p_k+1)$-open and all interior knots appear only once, that is, if
\begin{equation}
\begin{array}{c}
\boldsymbol{\xi}_k=\{a_k=\overset{p_k+1~\mathrm{times}}{\overbrace{\zeta_{k,1},\ldots,\zeta_{k,1}}},
\zeta_{k,2},\ldots,
\zeta_{k,r_{k}-1},
\overset{p_k+1~\mathrm{times}}{\overbrace{\zeta_{k,r_{k}},\ldots,\zeta_{k,r_{k}}}}=b_k\}, \\
a_k=\zeta_{k,1} < \zeta_{k,2} < \cdots < \zeta_{k,r_k-1} < \zeta_{k,r_k}=b_k.  \rule{0pt}{0.2in}
\end{array}
\label{3.4}
\end{equation}
\label{d3}
\end{definition}

A $(p_k+1)$-open knot sequence with or without simple knots is commonly found in applications \cite{cottrell09}.

\subsection{B-splines}
The B-spline functions for a given degree are defined in a recursive manner using the knot sequence as follows.

\begin{definition}
Let $\boldsymbol{\xi}_k$ be a general knot sequence of length at least $p_k+2$ for the interval $[a_k,b_k]$,
as defined by \eqref{3.1}. Denote by $B_{i_k,p_k,\boldsymbol{\xi}_k}^k(x_k)$ the $i_k$th univariate B-spline function with degree $p_k \in \mathbb{N}_0$ for the coordinate direction $k$.  Given the zero-degree basis functions,
\begin{equation}
B_{i_k,0,\boldsymbol{\xi}_k}^k(x_k) :=
\begin{cases}
1, & \xi_{k,i_k} \le x_k < \xi_{k,i_k+1}, \\
0, & \text{otherwise},
\end{cases}
\label{3.5}
\end{equation}
for $k=1,\ldots,N$, all higher-order B-spline functions on $\mathbb{R}$ are defined recursively by
\begin{equation}
B_{i_k,p_k,\boldsymbol{\xi}_k}^k(x_k) :=
\frac{x_k - \xi_{k,i_k}}{\xi_{k,i_k+p_k} - \xi_{k,i_k}} B_{i_k,p_k-1,\boldsymbol{\xi}_k}^k(x_k) +
\frac{\xi_{k,i_k+p_k+1}-x_k}{\xi_{k,i_k+p_k+1}-\xi_{k,i_k+1}} B_{i_k+1,p_k-1,\boldsymbol{\xi}_k}^k(x_k),
\label{3.6}
\end{equation}
where $1 \le k \le N$, $1 \le i_k \le n_k$, $1 \le p_k < \infty$, and $0/0$ is considered as zero.
\label{d4}
\end{definition}

The recursive formula in Definition \ref{d4} is due to Cox \cite{cox72} and de Boor \cite{deboor72}. However, a similar formula was reported by Popoviciu and Chakalov in the 1930s \cite{deboor03}.  For alternative definitions, such as those involving divided differences, readers should consult the seminal work of Schoenberg \cite{schoenberg67}.

The B-spline functions satisfy the following desirable properties \cite{cox72,deboor72,piegl97}:

\begin{property}
They are non-negative, that is, $B_{i_k,p_k,\boldsymbol{\xi}_k}^k(x_k) \ge 0$ for all $i_k$ and $x_k$.
\label{pr1}
\end{property}

\begin{property}
They are locally supported on the interval $[\xi_{k,i_k},\xi_{k,i_k+p_k+1})$ for all $i_k$.
\label{pr2}
\end{property}

\begin{property}
They are linearly independent, that is, if
\[
\displaystyle
\sum_{i_k=1}^{n_k}c_{i_k}^k B_{i_k,p_k,\boldsymbol{\xi}_k}^k(x_k)=0,
\]
then $c_{i_k}^k=0$ for all $i_k$.
\label{pr3}
\end{property}

\begin{property}
They form a partition of unity, that is,
\[
\displaystyle
\sum_{i_k=1}^{n_k} B_{i_k,p_k,\boldsymbol{\xi}_k}^k(x_k) = 1, ~x_k \in [\xi_{k,1},\xi_{k,n_k+p_k+1}].
\]
\label{pr4}
\end{property}

\begin{property}
They are pointwise $C^\infty$-continuous everywhere except at the knots $\xi_{k,i_k}$ of multiplicity $m_{k,i_k}$, where it is $C^{p_k-m_{k,i_k}}$-continuous, provided that $1 \le m_{k,i_k} < p_k+1$.
\label{pr5}
\end{property}

For an illustration, consider $k=1$, $a_1=0$, $b_1=1$, $p_1=2$, and two open knot sequences:
\[
\begin{array}{l}
(1)~\boldsymbol{\xi}_1=\{0,0,0,0.2,0.4,0.6,0.8,1,1,1\},  \\
(2)~\boldsymbol{\lambda}_1=\{0,0,0,0.2,0.4,0.6,0.6,0.8,1,1,1\}.
\end{array}
\]
Here, $\boldsymbol{\xi}_1$ is a three-open knot sequence with simple knots because the multiplicity of each interior knot is one.  In contrast, $\boldsymbol{\lambda}_1$ is merely a three-open knot sequence, as the multiplicity of the sixth knot is two. Consequently, there are seven and eight univariate quadratic B-spline basis functions for these two cases: $B_{i_1,2,\boldsymbol{\xi}_1}^1(x_1)$, $i_1=1,\ldots,7$; and $B_{i_1,2,\boldsymbol{\lambda}_1}^1(x_1)$, $i_1=1,\ldots,8$, which are illustrated in Figures \ref{fig1}(a) and \ref{fig1}(b), respectively. The basis functions for the first case are $C^1$-continuous at all interior knots, whereas the basis functions for the second case are $C^0$-continuous at $\lambda_{1,6}=\lambda_{1,7}=0.6$ and $C^1$-continuous at other interior knots.  Clearly, the regularities of B-splines depend on the degree and multiplicities of the knots selected.

\begin{figure}[htbp]
\begin{centering}
\includegraphics[scale=0.53]{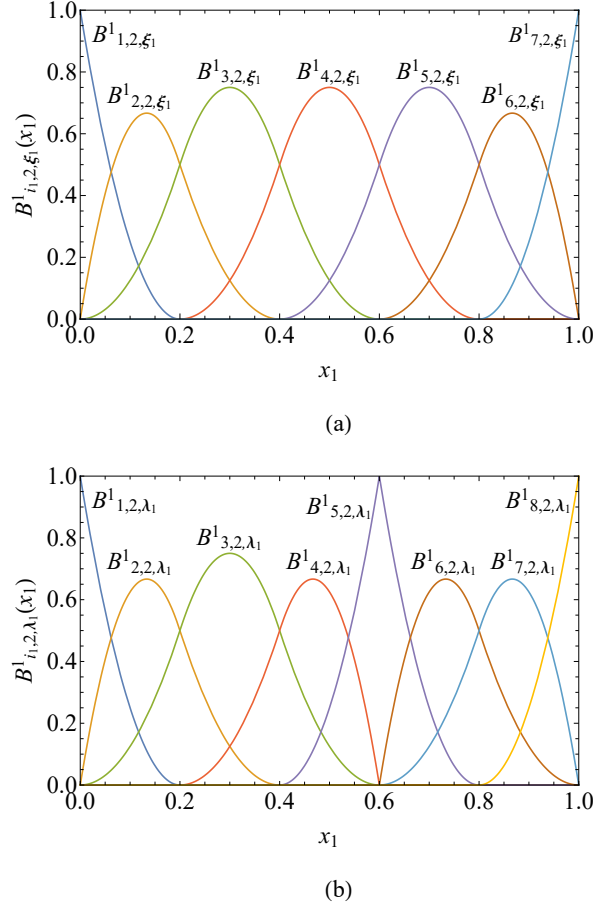}
\par\end{centering}
\caption{Quadratic B-splines generated on the interval [0,1];
(a) seven B-splines for $\boldsymbol{\xi}_1=\{0,0,0,0.2,0.4,0.6,0.8,1,1,1\}$
(b) eight B-splines for $\boldsymbol{\lambda}_1=\{0,0,0,0.2,0.4,0.6,0.6,0.8,1,1,1\}$.}
\label{fig1}
\end{figure}

\subsection{Spline space}
Suppose for $n_k > p_k \ge 0$, a knot sequence $\boldsymbol{\xi}_k$ has been specified on the interval $[a_k,b_k]$.  The associated spline space of degree $p_k$, denoted by $\mathcal{S}_{k,p_k,\boldsymbol{\xi}_k}$, is conveniently defined using an appropriate polynomial space. Define such a polynomial space as a finite-dimensional linear space
\vspace{-0.1in}
\[
\Pi_{p_k}:=
\left\{
g(x_k) =
\displaystyle
\sum_{l=0}^{p_k} c_{k,l} x_k^l: c_{k,l} \in \mathbb{R}
\right\}
\]
of real-valued polynomials in $x_k$ of degree at most $p_k$.

\begin{definition}[Schumaker \cite{schumaker07}]
For $n_k > p_k \ge 0$, let $\boldsymbol{\xi}_k$ be a
$(p_k+1)$-open knot sequence on the interval $[a_k,b_k]$, as defined by \eqref{3.3}.  Then the space
\begin{equation}
\mathcal{S}_{k,p_k,\boldsymbol{\xi}_k} :=
\left \{
\begin{array}{l}
g_k:[a_k,b_k] \to \mathbb{R}: ~\text{there exist polynomials}~g_{k,1},g_{k,2},\ldots,g_{k,r_k-1} ~\text{in}~ \Pi_{p_k}
\\
\text{such that}~g_k(x_k)=g_{k,i_k}(x_k)~\text{for}~x_k \in [\xi_{k,i_k},\xi_{k,i_k+1}),~i_k=1,\ldots,r_k-1, \\
\text{and}~
\displaystyle
\frac{\partial^{j_k} g_{k,i_k-1}}{\partial x_k}(\xi_{k,i_k}) =
\displaystyle
\frac{\partial^{j_k} g_{k,i_k}}{\partial x_k}(\xi_{k,i_k})~\text{for}~j_k=0,1,\ldots,p_k-m_{k,i_k},
\\
i_k=2,\ldots,r_k-1
\end{array}
\right\}
\label{3.7}
\end{equation}
is defined as the spline space of degree $p_k$ with distinct knots $\zeta_{k,1},\ldots,\zeta_{k,r_k}$ of multiplicities $m_{k,1}=p_k+1$, $1 \le m_{k,2} \le p_k+1$, $\ldots$, $1 \le m_{k,r_k-1} \le p_k+1$, $m_{k,r_k}=p_k+1$.
\label{d5}
\end{definition}

The spline space is uniquely determined by distinct interior knots $\zeta_{k,2},\ldots,\zeta_{k,r_k-1}$ of multiplicities $m_{k,2},\ldots,m_{k,r_k-1}$.  Indeed, the multiplicities decide the nature of $\mathcal{S}_{k,p_k,\boldsymbol{\xi}_k}$ by controlling the smoothness of the splines at interior knots.  For instance, if $m_{k,i_k}=p_k+1$, $i_k=2,\ldots,r_k-1$, then two polynomial pieces $g_{k,i_k-1}$ and $g_{k,i_k}$ in the sub-intervals adjoining the knot $\xi_{k,i_k}$ are unrelated, possibly forming a jump discontinuity at $\xi_{k,i_k}$.  In this case, $\mathcal{S}_{k,p_k,\boldsymbol{\xi}_k}$ will be the roughest space of splines.  If $m_{k,i_k}<p_k+1$, $i_k=2,\ldots,r_k-1$, then the two aforementioned polynomial pieces are connected smoothly in the sense that the first $p_k-m_{k,i_k}$ derivatives are all continuous across the knot. More specifically, if $m_{k,i_k}=1$, $i_k=2,\ldots,r_k-1$, then there are simple knots with the corresponding spline space becoming the smoothest space of piecewise polynomials of degree at most $p_k$.

\begin{proposition}[Schumaker \cite{schumaker07}]
The spline space $\mathcal{S}_{k,p_k,\boldsymbol{\xi}_k}$ is a linear space of dimension
\begin{equation}
\dim \mathcal{S}_{k,p_k,\boldsymbol{\xi}_k} = n_k = \displaystyle \sum_{i_k=2}^{r_k-1} m_{k,i_k} + p_k +1.
\label{3.8}
\end{equation}
\label{p1}
\end{proposition}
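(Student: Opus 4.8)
The plan is to construct an explicit linear isomorphism between $\mathcal{S}_{k,p_k,\boldsymbol{\xi}_k}$ and a product of polynomial spaces whose dimensions are evident, so that \eqref{3.8} can simply be read off. This simultaneously shows that $\mathcal{S}_{k,p_k,\boldsymbol{\xi}_k}$ is a linear space and circumvents any delicate ``the matching conditions are independent'' bookkeeping.

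First I would encode $g_k \in \mathcal{S}_{k,p_k,\boldsymbol{\xi}_k}$ by its $r_k-1$ polynomial pieces $g_{k,1},\ldots,g_{k,r_k-1} \in \Pi_{p_k}$ on the subintervals cut out by the distinct knots $\zeta_{k,1}<\cdots<\zeta_{k,r_k}$, which by \eqref{3.7} recover $g_k$ uniquely. The first piece $g_{k,1}$ is unconstrained, contributing $\dim\Pi_{p_k}=p_k+1$ parameters. For each interior knot $\zeta_{k,i_k}$, $i_k=2,\ldots,r_k-1$, the derivative-matching requirement of Definition \ref{d5} states exactly that $g_{k,i_k}-g_{k,i_k-1}\in\Pi_{p_k}$ vanishes to order at least $p_k-m_{k,i_k}+1$ at $\zeta_{k,i_k}$; by a Taylor expansion at $\zeta_{k,i_k}$ this is the same as
\[
g_{k,i_k}(x_k)=g_{k,i_k-1}(x_k)+(x_k-\zeta_{k,i_k})^{p_k-m_{k,i_k}+1}\,q_{k,i_k}(x_k),\qquad q_{k,i_k}\in\Pi_{m_{k,i_k}-1},
\]
with $q_{k,i_k}$ (a polynomial of degree at most $m_{k,i_k}-1$) otherwise arbitrary, the degree count $(p_k-m_{k,i_k}+1)+(m_{k,i_k}-1)=p_k$ ensuring the right-hand side stays in $\Pi_{p_k}$. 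Iterating this relation from $i_k=2$ to $i_k=r_k-1$, I would check that
\[
g_k\ \longmapsto\ (g_{k,1},q_{k,2},\ldots,q_{k,r_k-1})\ \in\ \Pi_{p_k}\times\prod_{i_k=2}^{r_k-1}\Pi_{m_{k,i_k}-1}
\]
is a well-defined linear map with linear inverse, the inverse reconstructing the pieces by the displayed recursion and gluing them along \eqref{3.7}. Taking dimensions then yields
\[
\dim\mathcal{S}_{k,p_k,\boldsymbol{\xi}_k}=(p_k+1)+\sum_{i_k=2}^{r_k-1}\dim\Pi_{m_{k,i_k}-1}=(p_k+1)+\sum_{i_k=2}^{r_k-1}m_{k,i_k},
\]
which is \eqref{3.8}; exhibiting a vector-space isomorphism also records that $\mathcal{S}_{k,p_k,\boldsymbol{\xi}_k}$ is linear.

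The main obstacle, mild as it is, is the equivalence used above: that two polynomials in $\Pi_{p_k}$ share derivatives of orders $0,1,\ldots,p_k-m_{k,i_k}$ at $\zeta_{k,i_k}$ precisely when their difference is divisible by $(x_k-\zeta_{k,i_k})^{p_k-m_{k,i_k}+1}$ with quotient in $\Pi_{m_{k,i_k}-1}$; this and the attendant degree bookkeeping (every reconstructed piece must land back in $\Pi_{p_k}$) are short Taylor-expansion arguments, but they must be applied uniformly in $i_k$ so that the conditions at distinct interior knots stay decoupled, which is exactly what the stepwise construction secures. As a consistency check I would confirm $(p_k+1)+\sum_{i_k=2}^{r_k-1}m_{k,i_k}=n_k$ using $\sum_{i_k=1}^{r_k}m_{k,i_k}=n_k+p_k+1$ together with $m_{k,1}=m_{k,r_k}=p_k+1$. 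An alternative proof would instead show that the $n_k$ B-splines $B_{i_k,p_k,\boldsymbol{\xi}_k}^k$ form a basis of $\mathcal{S}_{k,p_k,\boldsymbol{\xi}_k}$: they lie in the space (piecewise polynomials of degree $p_k$ whose smoothness at each knot matches \eqref{3.7} by Property \ref{pr5}) and are linearly independent by Property \ref{pr3}, so only their spanning property is in question; but proving that spanning property is essentially the Curry--Schoenberg theorem, which is heavier than the elementary isomorphism above.
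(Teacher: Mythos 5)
The paper offers no proof of this proposition at all: it is stated as a citation to Schumaker's book, so there is nothing internal to compare your argument against. Your proof is correct and self-contained, and it is in fact the classical dimension count for piecewise-polynomial spaces (essentially Theorem 4.4 of Schumaker): the first piece contributes $\dim\Pi_{p_k}=p_k+1$ free parameters, and at each interior distinct knot $\zeta_{k,i_k}$ the matching of derivatives of orders $0,\ldots,p_k-m_{k,i_k}$ is equivalent, for two polynomials in $\Pi_{p_k}$, to their difference being $(x_k-\zeta_{k,i_k})^{p_k-m_{k,i_k}+1}q_{k,i_k}(x_k)$ with $q_{k,i_k}\in\Pi_{m_{k,i_k}-1}$ free, contributing $m_{k,i_k}$ parameters; the stepwise reconstruction makes the resulting parametrization a linear bijection, and the final arithmetic using $\sum_{i_k=1}^{r_k}m_{k,i_k}=n_k+p_k+1$ with $m_{k,1}=m_{k,r_k}=p_k+1$ checks out. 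Two small points worth making explicit if you write this up: when $m_{k,i_k}=p_k+1$ for an interior knot the range $j_k=0,\ldots,p_k-m_{k,i_k}$ in \eqref{3.7} is empty, and your formula degenerates gracefully to "the adjacent pieces are unrelated," with quotient space $\Pi_{p_k}$ of dimension $m_{k,i_k}$ as required; and the smoothness conditions in \eqref{3.7} are written with the repeated-knot indices $\xi_{k,i_k}$ but are clearly intended at the distinct knots $\zeta_{k,i_k}$, which is the reading your proof (correctly) adopts. Your closing remark is also apt: the alternative route through the B-spline basis of Proposition \ref{p2} would require the Curry--Schoenberg spanning argument, which is substantially heavier than the elementary isomorphism you construct.
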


\begin{proposition}[Schumaker \cite{schumaker07}]
For $n_k > p_k \ge 0$, let $\boldsymbol{\xi}_k$ be a $(p_k+1)$-open knot sequence on the interval $[a_k,b_k]$. Denote by
\begin{equation}
\left\{
B_{1,p_k,\boldsymbol{\xi}_k}^k(x_k),\ldots,B_{n_k,p_k,\boldsymbol{\xi}_k}^k(x_k)
\right\}
\label{3.9}
\end{equation}
a set of $n_k$ B-splines of degree $p_k$. Then
\begin{equation}
\mathcal{S}_{k,p_k,\boldsymbol{\xi}_k} =
\operatorname{span}\{B_{i_k,p_k,\boldsymbol{\xi}_k}^k(x_k)\}_{i_k=1,\ldots,n_k}.
\label{3.10}
\end{equation}
\label{p2}
\end{proposition}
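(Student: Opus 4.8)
The plan is to prove the two set inclusions, with the nontrivial direction handled by a dimension count rather than a direct construction.

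First I would show $\operatorname{span}\{B_{i_k,p_k,\boldsymbol{\xi}_k}^k\}_{i_k=1}^{n_k} \subseteq \mathcal{S}_{k,p_k,\boldsymbol{\xi}_k}$. Since $\mathcal{S}_{k,p_k,\boldsymbol{\xi}_k}$ is a linear space, it suffices to verify that each B-spline $B_{i_k,p_k,\boldsymbol{\xi}_k}^k$ meets the two defining requirements in \eqref{3.7}: (i) on every knot interval $[\xi_{k,j_k},\xi_{k,j_k+1})$ it agrees with a polynomial in $\Pi_{p_k}$, and (ii) across each interior knot $\xi_{k,i_k}$ of multiplicity $m_{k,i_k}$ its derivatives of orders $0,1,\ldots,p_k-m_{k,i_k}$ match from both sides. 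Claim (i) follows by induction on the degree using the Cox--de Boor recursion \eqref{3.6}: the zero-degree B-splines \eqref{3.5} are piecewise constant, and the recursion multiplies piecewise polynomials of degree at most $p_k-1$ by affine functions of $x_k$, producing piecewise polynomials of degree at most $p_k$ on each subinterval determined by $\boldsymbol{\xi}_k$. Claim (ii) is precisely the smoothness statement of Property \ref{pr5}. Hence every B-spline, and therefore every linear combination of the $n_k$ B-splines, lies in $\mathcal{S}_{k,p_k,\boldsymbol{\xi}_k}$.

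Next I would pin down the dimension of the span. By Property \ref{pr3} the functions $B_{1,p_k,\boldsymbol{\xi}_k}^k,\ldots,B_{n_k,p_k,\boldsymbol{\xi}_k}^k$ are linearly independent on $[a_k,b_k]$, so the span in \eqref{3.10} has dimension exactly $n_k$. On the other hand, Proposition \ref{p1} gives $\dim \mathcal{S}_{k,p_k,\boldsymbol{\xi}_k} = n_k$. A subspace of a finite-dimensional space whose dimension equals that of the ambient space must coincide with it, which yields \eqref{3.10}.

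I expect the main obstacle to be the first inclusion, specifically the bookkeeping behind claims (i) and (ii): one must track how the recursion \eqref{3.6} behaves on the subintervals cut out by a knot sequence with repeated knots (where some denominators vanish and the convention $0/0:=0$ is invoked), and confirm that the induction really delivers the degree bound $p_k$ together with the regularity exponent $p_k-m_{k,i_k}$ at each interior knot, rather than a weaker statement. Once the membership $B_{i_k,p_k,\boldsymbol{\xi}_k}^k \in \mathcal{S}_{k,p_k,\boldsymbol{\xi}_k}$ is secured, the remainder is the short dimension argument above. (Alternatively, one could bypass Proposition \ref{p1} by writing each polynomial piece explicitly in the B-spline basis via a Marsden-type identity, but the dimension route is cleaner given what has already been established in the excerpt.)
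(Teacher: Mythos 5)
The paper offers no proof of this proposition at all---it is stated as a cited result from Schumaker's book---so there is nothing internal to compare your argument against. Your proof is correct and is in fact the standard textbook route (essentially the one in Schumaker): establish the inclusion $\operatorname{span}\{B_{i_k,p_k,\boldsymbol{\xi}_k}^k\} \subseteq \mathcal{S}_{k,p_k,\boldsymbol{\xi}_k}$ by checking piecewise-polynomial structure (induction on the Cox--de Boor recursion) and interknot smoothness (Property \ref{pr5}), then conclude equality from the dimension count supplied by Property \ref{pr3} and Proposition \ref{p1}. Two small points worth tightening if you were to write this out in full: first, when an interior knot has multiplicity $m_{k,i_k}=p_k+1$ the smoothness condition in \eqref{3.7} is vacuous (the range $j_k=0,\ldots,p_k-m_{k,i_k}$ is empty), so Property \ref{pr5}'s proviso $m_{k,i_k}<p_k+1$ costs you nothing, but you should say so explicitly rather than leaning on Property \ref{pr5} alone; second, there is the usual right-endpoint technicality that the recursively defined B-splines are right-continuous and the last one vanishes at $x_k=b_k$ unless the convention at $b_k$ is adjusted---a standard matter, but one the dimension argument quietly presupposes has been handled. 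Neither issue undermines the proof; given that the paper already grants Properties \ref{pr3} and \ref{pr5} and Proposition \ref{p1}, your argument is complete and appropriately economical.
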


\section{Orthonormal B-splines}
The B-splines presented in the preceding section, although they form a basis of the spline space $\mathcal{S}_{k,p_k,\boldsymbol{\xi}_k}$, are obtained without any explicit consideration of the probability law of $X_k$.  Therefore, they are not orthogonal with respect to the probability measure $f_{X_k}(x_k)dx_k$.  A popular choice for constructing orthogonal or orthonormal basis is the Gram-Schmidt procedure \cite{golub96}. However, it is known to be ill-conditioned.  Therefore, more stable methods are needed to compute orthonormal splines consistent with the input probability measure.  In this section, a linear transformation is proposed to generate their orthonormal version. The latter splines facilitate an orthogonal series expansion in a Hilbert space, resulting in concise forms of the expansion and second-moment properties of an output random variable of interest.

\subsection{Spline moment matrix}
In reference to the set of B-splines in \eqref{3.9}, consider replacing any one of its elements with an arbitrary non-zero constant, thus creating an auxiliary set.  Without loss of generality, let
\begin{equation}
\left\{
1,B_{2,p_k,\boldsymbol{\xi}_k}^k(x_k),\ldots,B_{n_k,p_k,\boldsymbol{\xi}_k}^k(x_k)
\right\}
\label{4.1}
\end{equation}
be such a set, obtained by replacing the first element of \eqref{3.9} with 1.  Proposition \ref{p3} shows that the auxiliary B-splines are also linearly independent.

\begin{proposition}
The auxiliary set of B-splines in \eqref{4.1} is linearly independent.
\label{p3}
\end{proposition}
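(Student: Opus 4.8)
The plan is to reduce the claim to the already-established linear independence of the original B-spline basis (Property \ref{pr3}) by exploiting the partition-of-unity identity (Property \ref{pr4}). Suppose that
\[
c_1 \cdot 1 + \sum_{i_k=2}^{n_k} c_{i_k} B_{i_k,p_k,\boldsymbol{\xi}_k}^k(x_k) = 0 \quad \text{for all } x_k \in [a_k,b_k]
\]
for some real constants $c_1, c_2, \ldots, c_{n_k}$. First I would replace the constant function $1$ using Property \ref{pr4}, namely $1 = \sum_{i_k=1}^{n_k} B_{i_k,p_k,\boldsymbol{\xi}_k}^k(x_k)$ on $[\xi_{k,1},\xi_{k,n_k+p_k+1}] = [a_k,b_k]$, which is exactly the interval on which the relation above is posited. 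Substituting and collecting terms yields
\[
c_1 B_{1,p_k,\boldsymbol{\xi}_k}^k(x_k) + \sum_{i_k=2}^{n_k} (c_1 + c_{i_k}) B_{i_k,p_k,\boldsymbol{\xi}_k}^k(x_k) = 0 .
\]

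Next I would invoke Property \ref{pr3}: since the B-splines $B_{1,p_k,\boldsymbol{\xi}_k}^k, \ldots, B_{n_k,p_k,\boldsymbol{\xi}_k}^k$ are linearly independent on $[a_k,b_k]$, every coefficient in the last display must vanish. This gives $c_1 = 0$ and $c_1 + c_{i_k} = 0$ for $i_k = 2, \ldots, n_k$, hence $c_{i_k} = 0$ for all $i_k$. Therefore the auxiliary set \eqref{4.1} is linearly independent.

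Finally, I would note an equivalent route that will also be handy later: the identity $B_{1,p_k,\boldsymbol{\xi}_k}^k = 1 - \sum_{i_k=2}^{n_k} B_{i_k,p_k,\boldsymbol{\xi}_k}^k$ shows that every element of the original basis lies in the span of \eqref{4.1}, so by Proposition \ref{p2} the auxiliary set spans $\mathcal{S}_{k,p_k,\boldsymbol{\xi}_k}$; since it has exactly $n_k = \dim \mathcal{S}_{k,p_k,\boldsymbol{\xi}_k}$ elements by Proposition \ref{p1}, it must be a basis and in particular linearly independent. There is essentially no hard step here; the only point demanding a modicum of care is to confirm that the partition-of-unity identity is applied on the full interval $[a_k,b_k]$, so that the substitution is legitimate wherever the hypothesized linear relation holds, and (in the second argument) that the element being replaced was a genuine member of the spanning set, so that no dimension is lost.
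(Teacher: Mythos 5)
Your proof is correct and follows essentially the same route as the paper's: substitute the partition-of-unity identity (Property \ref{pr4}) for the constant $1$, collect coefficients, and invoke the linear independence of the original B-splines (Property \ref{pr3}) to conclude all coefficients vanish. The additional dimension-counting argument you sketch is a valid alternative but is not needed; the main argument matches the paper's proof.
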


\begin{proof}
For constants $\bar{c}_{i_k}^k \in \mathbb{R}$, $i_k=1,\ldots,n_k$, set
\begin{equation}
\displaystyle
\bar{c}_{1}^k +
\sum_{i_k=2}^{n_k} \bar{c}_{i_k}^k B_{i_k,p_k,\boldsymbol{\xi}_k}^k(x_k) = 0.
\label{4.2}
\end{equation}
Using Property \ref{pr4}, write \eqref{4.2} as
\begin{equation}
\displaystyle
\bar{c}_{1}^k B_{1,p_k,\boldsymbol{\xi}_k}^k(x_k) +
\sum_{i_k=2}^{n_k}
\left( \bar{c}_{1}^k + \bar{c}_{i_k}^k \right) B_{i_k,p_k,\boldsymbol{\xi}_k}^k(x_k) = 0.
\label{4.3}
\end{equation}
From Property \ref{pr3},
$\{B_{1,p_k,\boldsymbol{\xi}_k}^k(x_k),\ldots,B_{n_k,p_k,\boldsymbol{\xi}_k}^k(x_k)\}$ is linearly independent, meaning that the coefficients of \eqref{4.3} must all vanish. Consequently,
\begin{equation}
\bar{c}_{i_k}^k =0, ~i_k=1,\ldots,n_k,
\label{4.4}
\end{equation}
completing the proof.
\end{proof}

When the input random variable $X_k$, instead of the real variable $x_k$, is inserted in the argument, the elements of the auxiliary set become random B-splines.  A formal definition of the spline moment matrix follows.

\begin{definition}
Let
\[
\mathbf{P}_k(X_k):=(1,B_{2,p_k,\boldsymbol{\xi}_k}^k(X_k),\ldots,B_{n_k,p_k,\boldsymbol{\xi}_k}^k(X_k))^\intercal
\]
be an $n_k$-dimensional vector of constant or random B-splines. Then the $n_k \times n_k$ matrix, defined by
\begin{equation}
\mathbf{G}_k:=\mathbb{E}[\mathbf{P}_k(X_k) \mathbf{P}_k^\intercal(X_k)],
\label{4.5}
\end{equation}
is called the spline moment matrix of $\mathbf{P}_k(X_k)$.  The matrix $\mathbf{G}_k$ exists as $X_k$ has finite moments up to order $2 p_k$, as mandated by Assumption \ref{a1}.
\label{d6}
\end{definition}

Here, any element of $\mathbf{G}_k$ represents the expectation of the product between two random splines.  However, $\mathbf{G}_k$ is not the covariance matrix of $\mathbf{P}_k(X_k)$, as the means of B-splines are not \emph{zero}.

\begin{proposition}
The spline moment matrix $\mathbf{G}_k$ is symmetric and positive-definite.
\label{p4}
\end{proposition}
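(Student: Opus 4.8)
The plan is to establish the two claims—symmetry and positive-definiteness—separately, with symmetry being essentially immediate and positive-definiteness requiring the linear independence established in Proposition \ref{p3}.

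First I would prove symmetry. By definition, $\mathbf{G}_k=\mathbb{E}[\mathbf{P}_k(X_k)\mathbf{P}_k^\intercal(X_k)]$, so the $(i_k,j_k)$ entry is $\mathbb{E}[(\mathbf{P}_k)_{i_k}(X_k)(\mathbf{P}_k)_{j_k}(X_k)]$, which is manifestly invariant under interchange of $i_k$ and $j_k$ since scalar multiplication commutes. Equivalently, $\mathbf{G}_k^\intercal=\mathbb{E}[(\mathbf{P}_k\mathbf{P}_k^\intercal)^\intercal]=\mathbb{E}[\mathbf{P}_k\mathbf{P}_k^\intercal]=\mathbf{G}_k$. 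One should also note in passing that all entries are finite: each is the expectation of a product of two bounded random B-splines (B-splines are bounded on a bounded interval by Properties \ref{pr1} and \ref{pr4}, and the constant $1$ is trivially bounded), hence integrable with respect to the probability measure; this is already remarked in Definition \ref{d6}.

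Next I would prove positive-definiteness, which is the substantive part. For an arbitrary $\mathbf{c}=(\bar{c}_1^k,\ldots,\bar{c}_{n_k}^k)^\intercal\in\mathbb{R}^{n_k}$, I would compute the quadratic form
\[
\mathbf{c}^\intercal\mathbf{G}_k\mathbf{c}
=\mathbf{c}^\intercal\mathbb{E}[\mathbf{P}_k(X_k)\mathbf{P}_k^\intercal(X_k)]\mathbf{c}
=\mathbb{E}\big[(\mathbf{c}^\intercal\mathbf{P}_k(X_k))(\mathbf{P}_k^\intercal(X_k)\mathbf{c})\big]
=\mathbb{E}\big[(\mathbf{c}^\intercal\mathbf{P}_k(X_k))^2\big]\ge 0,
\]
where pulling $\mathbf{c}$ through the expectation is justified by linearity and finiteness. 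This shows $\mathbf{G}_k$ is positive-semidefinite. For strict positivity, suppose $\mathbf{c}^\intercal\mathbf{G}_k\mathbf{c}=0$. Then the non-negative random variable $Y_k:=(\mathbf{c}^\intercal\mathbf{P}_k(X_k))^2=\big(\bar{c}_1^k+\sum_{i_k=2}^{n_k}\bar{c}_{i_k}^k B_{i_k,p_k,\boldsymbol{\xi}_k}^k(X_k)\big)^2$ has zero expectation, hence $Y_k=0$ almost surely with respect to $\mathbb{P}$, i.e. $\mathbf{c}^\intercal\mathbf{P}_k(x_k)=0$ for $f_{X_k}$-almost every $x_k$. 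The key step is to upgrade this almost-everywhere vanishing to vanishing everywhere on $[a_k,b_k]$: since $f_{X_k}$ is continuous with support $[a_k,b_k]$ (Assumption \ref{a1}, Item (3)), the set where $f_{X_k}>0$ is dense in $[a_k,b_k]$, and since $x_k\mapsto\mathbf{c}^\intercal\mathbf{P}_k(x_k)$ is piecewise continuous (a spline, plus a constant), a density argument—or the observation that a spline vanishing on a set of positive Lebesgue measure within each knot interval must vanish identically there—forces $\bar{c}_1^k+\sum_{i_k=2}^{n_k}\bar{c}_{i_k}^k B_{i_k,p_k,\boldsymbol{\xi}_k}^k(x_k)\equiv 0$ on $[a_k,b_k]$. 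By Proposition \ref{p3} the auxiliary set $\{1,B_{2,p_k,\boldsymbol{\xi}_k}^k,\ldots,B_{n_k,p_k,\boldsymbol{\xi}_k}^k\}$ is linearly independent, so all $\bar{c}_{i_k}^k=0$, i.e. $\mathbf{c}=\mathbf{0}$. Hence $\mathbf{c}^\intercal\mathbf{G}_k\mathbf{c}>0$ for every $\mathbf{c}\neq\mathbf{0}$, and $\mathbf{G}_k$ is positive-definite.

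The main obstacle I anticipate is precisely that measure-theoretic upgrade from "almost everywhere zero" to "identically zero," since linear independence in Proposition \ref{p3} is a pointwise-everywhere statement whereas vanishing of an expectation only gives an almost-everywhere conclusion. The cleanest way to close this gap is to invoke Item (3) of Assumption \ref{a1}: continuity of $f_{X_k}$ together with support equal to all of $[a_k,b_k]$ guarantees $\{f_{X_k}>0\}$ is open and dense, and a function that is a polynomial on each knot subinterval (hence continuous there) and vanishes on a dense subset of that subinterval vanishes on its closure; stitching the subintervals together yields vanishing on $[a_k,b_k]$. With that in hand, Proposition \ref{p3} delivers the conclusion, completing the proof.
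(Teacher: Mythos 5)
Your proof is correct and follows the same route as the paper, which simply observes that $\mathbf{G}_k$ is the Gram matrix of the linearly independent family from Proposition \ref{p3} and is therefore positive-definite. Your unpacking of that one-line argument---in particular, the upgrade from $f_{X_k}$-almost-everywhere vanishing of $\mathbf{c}^\intercal\mathbf{P}_k(x_k)$ to identical vanishing on $[a_k,b_k]$ via continuity and full support of the density, so that the pointwise linear independence of Proposition \ref{p3} genuinely applies---supplies a detail the paper leaves implicit.
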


\begin{proof}
By definition, $\mathbf{G}_k=\mathbf{G}_k^\intercal$.   From Proposition \ref{p3}, the elements of $\mathbf{P}_k(x_k)$ are linearly independent.  Hence, the spline moment matrix is a Gram matrix and is, therefore, positive-definite.
\end{proof}

\subsection{Whitening transformation}
From Proposition \ref{p4}, $\mathbf{G}_k$ is positive-definite and therefore invertible.  Consequently, there is a non-singular whitening matrix $\mathbf{W}_k \in \mathbb{R}^{n_k \times n_k}$ such that the factorization
\begin{equation}
\mathbf{W}_k^\intercal\mathbf{W}_k=\mathbf{G}_k^{-1}~~\text{or}~~
\mathbf{W}_k^{-1}\mathbf{W}_k^{-\intercal}=\mathbf{G}_k
\label{4.6}
\end{equation}
holds.  This leads to a set of orthonormal B-splines.

\begin{definition}
Let $\mathbf{X}:=(X_{1},\ldots,X_{N})^\intercal$ be a vector of $N\in\mathbb{N}$ input random variables fulfilling Assumption \ref{a1}.  Recall, for $n_k > p_k \ge 0$ and a specified knot sequence $\boldsymbol{\xi}_k$, that $\mathbf{P}_k(X_k)$ represents an $n_k$-dimensional vector of B-splines of degree $p_k$. Then the corresponding $n_k$-dimensional vector
\[
\boldsymbol{\psi}_k(X_k):=
(\psi_{1,p_k,\boldsymbol{\xi}_k}^k(X_k),\ldots,\psi_{n_k,p_k,\boldsymbol{\xi}_k}^k(X_k))^\intercal
\]
of orthonormal B-splines, also of degree $p_k$, is obtained from the whitening transformation
\begin{equation}
\boldsymbol{\psi}_k(X_k)= \mathbf{W}_k \mathbf{P}_k(X_k),
\label{4.7}
\end{equation}
where $\mathbf{W}_k \in \mathbb{R}^{n_k \times n_k}$ is a non-singular whitening matrix satisfying \eqref{4.6}.
\label{d7}
\end{definition}

The whitening transformation in Definition \ref{d7} is a linear transformation that converts $\mathbf{P}_k(X_k)$ into $\boldsymbol{\psi}_k(X_k)$ in such a way that the latter has uncorrelated random B-splines.  The transformation is called ``whitening" because it changes one random vector to the other, which has statistical properties akin to that of a white noise vector.  However, the condition \eqref{4.6} does not uniquely determine the whitening matrix $\mathbf{W}_k$.  There are infinitely many choices of $\mathbf{W}_k$ satisfying \eqref{4.6}.  All of these choices result in a linear transformation, decorrelating $\mathbf{P}_k(X_k)$ but producing different random vectors $\boldsymbol{\psi}_k(X_k)$ \cite{kessy16,rahman18b}.

A prominent choice for $\mathbf{W}_k$, obtained from the Cholesky factorization $\mathbf{G}_k = \mathbf{Q}_k \mathbf{Q}_k^\intercal$, is
\begin{equation}
\mathbf{W}_k=\mathbf{Q}_k^{-1},
\label{4.8}
\end{equation}
where $\mathbf{Q}_k$ is an $n_k \times n_k$ lower-triangular matrix.  The rest of the paper will use the Cholesky factorization.  Nonetheless, other whitening matrices, in conjunction with \eqref{4.6}, can be used to generate orthonormal B-splines.

\begin{proposition}
Given the preambles of Propositions \ref{p2} and \ref{p3}, the set of elements of $\boldsymbol{\psi}_k(x_k)$ from Definition \ref{d7} also spans the spline space $\mathcal{S}_{k,p_k,\boldsymbol{\xi}_k}$, that is,
\begin{equation}
\mathcal{S}_{k,p_k,\boldsymbol{\xi}_k} :=
\operatorname{span}\{\psi_{i_k,p_k,\boldsymbol{\xi}_k}^k(x_k)\}_{i_k=1,\ldots,n_k}.
\label{4.9}
\end{equation}
\label{p5}
\end{proposition}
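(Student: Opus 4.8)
The plan is to show the two spaces are equal by showing each is contained in the other, using that $\mathbf{W}_k$ is non-singular. First I would observe that by Definition \ref{d7}, each $\psi_{i_k,p_k,\boldsymbol{\xi}_k}^k(x_k)$ is a finite linear combination of the entries of $\mathbf{P}_k(x_k)$, namely $1, B_{2,p_k,\boldsymbol{\xi}_k}^k(x_k),\ldots,B_{n_k,p_k,\boldsymbol{\xi}_k}^k(x_k)$. By Property \ref{pr4}, the constant $1$ equals $\sum_{i_k=1}^{n_k} B_{i_k,p_k,\boldsymbol{\xi}_k}^k(x_k)$, so every entry of $\mathbf{P}_k(x_k)$ lies in $\operatorname{span}\{B_{i_k,p_k,\boldsymbol{\xi}_k}^k(x_k)\}_{i_k=1}^{n_k}$, hence so does each $\psi_{i_k,p_k,\boldsymbol{\xi}_k}^k(x_k)$. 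By Proposition \ref{p2} this span is exactly $\mathcal{S}_{k,p_k,\boldsymbol{\xi}_k}$, giving the inclusion $\operatorname{span}\{\psi_{i_k,p_k,\boldsymbol{\xi}_k}^k(x_k)\}_{i_k=1}^{n_k} \subseteq \mathcal{S}_{k,p_k,\boldsymbol{\xi}_k}$.

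For the reverse inclusion, I would invert the whitening transformation. Since $\mathbf{W}_k$ is non-singular, \eqref{4.7} gives $\mathbf{P}_k(x_k) = \mathbf{W}_k^{-1} \boldsymbol{\psi}_k(x_k)$, so each entry of $\mathbf{P}_k(x_k)$ — in particular each $B_{i_k,p_k,\boldsymbol{\xi}_k}^k(x_k)$ for $i_k = 2,\ldots,n_k$, as well as the constant $1$ — lies in $\operatorname{span}\{\psi_{i_k,p_k,\boldsymbol{\xi}_k}^k(x_k)\}_{i_k=1}^{n_k}$. It remains to recover the one B-spline that was dropped, $B_{1,p_k,\boldsymbol{\xi}_k}^k(x_k)$; this follows from Property \ref{pr4} again, since $B_{1,p_k,\boldsymbol{\xi}_k}^k(x_k) = 1 - \sum_{i_k=2}^{n_k} B_{i_k,p_k,\boldsymbol{\xi}_k}^k(x_k)$, a combination of things already shown to lie in the span of the orthonormal B-splines. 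Hence all of $B_{1,p_k,\boldsymbol{\xi}_k}^k(x_k),\ldots,B_{n_k,p_k,\boldsymbol{\xi}_k}^k(x_k)$ belong to $\operatorname{span}\{\psi_{i_k,p_k,\boldsymbol{\xi}_k}^k(x_k)\}_{i_k=1}^{n_k}$, and by Proposition \ref{p2} this yields $\mathcal{S}_{k,p_k,\boldsymbol{\xi}_k} \subseteq \operatorname{span}\{\psi_{i_k,p_k,\boldsymbol{\xi}_k}^k(x_k)\}_{i_k=1}^{n_k}$.

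Combining the two inclusions gives \eqref{4.9}. As a remark, one could instead argue via dimension counting: the orthonormal B-splines are linearly independent (being an invertible linear image of the linearly independent auxiliary set of Proposition \ref{p3}), there are $n_k$ of them, they lie in the $n_k$-dimensional space $\mathcal{S}_{k,p_k,\boldsymbol{\xi}_k}$ by the first inclusion, and therefore they form a basis of it. The only mild subtlety — and the step that most deserves care — is the bookkeeping around the dropped first B-spline: one must use Property \ref{pr4} in both directions so that the constant $1$ and $B_{1,p_k,\boldsymbol{\xi}_k}^k(x_k)$ are interchangeable within either span. There is no real obstacle here; the whole argument is a short consequence of non-singularity of $\mathbf{W}_k$ together with the partition-of-unity property and Proposition \ref{p2}.
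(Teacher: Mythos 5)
Your proof is correct. The paper itself offers only a one-line justification: it observes that the elements of $\boldsymbol{\psi}_k(x_k)$ are linearly independent (being the image of the linearly independent auxiliary set of Proposition \ref{p3} under the non-singular matrix $\mathbf{W}_k$), and then implicitly invokes the dimension count $\dim \mathcal{S}_{k,p_k,\boldsymbol{\xi}_k} = n_k$ to conclude that $n_k$ independent elements of the space must span it --- exactly the argument you relegate to your closing remark. Your main argument instead establishes the two inclusions directly: the forward inclusion by writing each $\psi_{i_k,p_k,\boldsymbol{\xi}_k}^k$ as a combination of the entries of $\mathbf{P}_k(x_k)$ and using Property \ref{pr4} to place the constant $1$ inside $\operatorname{span}\{B_{i_k,p_k,\boldsymbol{\xi}_k}^k\}$, and the reverse inclusion by inverting \eqref{4.7} and recovering the dropped spline $B_{1,p_k,\boldsymbol{\xi}_k}^k$ from the partition of unity. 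This is more work than the paper's route but it is also more self-contained: it makes explicit the bookkeeping around the replaced first B-spline, which the paper's one-sentence proof silently assumes (one does need the forward inclusion, i.e.\ that the $\psi$'s actually lie in $\mathcal{S}_{k,p_k,\boldsymbol{\xi}_k}$, before the dimension count applies, and that step uses Property \ref{pr4} in precisely the way you describe). Either version is acceptable; yours trades brevity for transparency and avoids relying on Proposition \ref{p1} for the dimension of the spline space.
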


A proof of Proposition \ref{p5} can be obtained by recognizing the elements of $\boldsymbol{\psi}_k(x_k)$ to be linearly independent.

\subsection{Statistical properties}
Similar to $\mathbf{P}_k(X_k)$, $\boldsymbol{\psi}_k(X_k)$ is also a function of random input variable $X_k$.  Proposition \ref{p6} describes its second-moment properties.

\begin{proposition}
Let $\mathbf{X}:=(X_{1},\ldots,X_{N})^\intercal:(\Omega,\mathcal{F})\to(\mathbb{A}^{N},\mathcal{B}^{N})$ be a vector of $N \in \mathbb{N}$ input random variables fulfilling Assumption \ref{a1}.  If the whitening matrix is selected as $\mathbf{Q}_k^{-1}$, then the first- and second-order moments of the vector of orthonormal B-splines $\boldsymbol{\psi}_k(X_k)= \mathbf{Q}_k^{-1} \mathbf{P}_k(X_k)$, $k=1,\ldots,N$, are
\begin{equation}
\mathbb{E} \left[ \boldsymbol{\psi}_k(X_k) \right] = (1, 0, \ldots, 0)^\intercal
\label{4.10}
\end{equation}
and
\begin{equation}
\mathbb{E} \left[ \boldsymbol{\psi}_k(X_k) \boldsymbol{\psi}_k^\intercal(X_k) \right] = \mathbf{I}_{n_k},
\label{4.11}
\end{equation}
respectively, where $\mathbf{I}_{n_k}$ is the $n_k \times n_k$ identity matrix.
\label{p6}
\end{proposition}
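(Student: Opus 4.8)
The plan is to derive both moment formulas directly from the whitening transformation $\boldsymbol{\psi}_k(X_k) = \mathbf{Q}_k^{-1}\mathbf{P}_k(X_k)$ in \eqref{4.7} with the Cholesky choice $\mathbf{W}_k = \mathbf{Q}_k^{-1}$ of \eqref{4.8}, together with the factorization $\mathbf{G}_k = \mathbf{Q}_k\mathbf{Q}_k^\intercal$ of the spline moment matrix \eqref{4.5}, exploiting two structural facts: the first component of $\mathbf{P}_k(X_k)$ is the constant function $1$, and $\mathbf{Q}_k$ is lower triangular. Throughout I write $\mathbf{e}_1 := (1,0,\ldots,0)^\intercal \in \mathbb{R}^{n_k}$.

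I would first dispose of \eqref{4.11}, which is immediate. By linearity of the expectation operator and the definition of $\mathbf{G}_k$,
\[
\mathbb{E}\!\left[\boldsymbol{\psi}_k(X_k)\boldsymbol{\psi}_k^\intercal(X_k)\right]
= \mathbf{Q}_k^{-1}\,\mathbb{E}\!\left[\mathbf{P}_k(X_k)\mathbf{P}_k^\intercal(X_k)\right]\mathbf{Q}_k^{-\intercal}
= \mathbf{Q}_k^{-1}\mathbf{G}_k\mathbf{Q}_k^{-\intercal}
= \mathbf{Q}_k^{-1}\mathbf{Q}_k\mathbf{Q}_k^\intercal\mathbf{Q}_k^{-\intercal}
= \mathbf{I}_{n_k},
\]
so no further work is needed for the second-order identity.

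For \eqref{4.10} I would begin from $\mathbb{E}[\boldsymbol{\psi}_k(X_k)] = \mathbf{Q}_k^{-1}\,\mathbb{E}[\mathbf{P}_k(X_k)]$ and use the constant first component to identify the mean vector with a column of $\mathbf{G}_k$: since $(\mathbf{G}_k)_{i1} = \mathbb{E}[(\mathbf{P}_k(X_k))_i\cdot 1] = \mathbb{E}[(\mathbf{P}_k(X_k))_i]$, one has $\mathbb{E}[\mathbf{P}_k(X_k)] = \mathbf{G}_k\mathbf{e}_1$. Substituting $\mathbf{G}_k = \mathbf{Q}_k\mathbf{Q}_k^\intercal$ then collapses the expression to $\mathbb{E}[\boldsymbol{\psi}_k(X_k)] = \mathbf{Q}_k^\intercal\mathbf{e}_1$, which, because $\mathbf{Q}_k$ is lower triangular, equals $((\mathbf{Q}_k)_{11},0,\ldots,0)^\intercal$. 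The remaining point is to pin down $(\mathbf{Q}_k)_{11}$: comparing the $(1,1)$ entries in $\mathbf{G}_k = \mathbf{Q}_k\mathbf{Q}_k^\intercal$ with $\mathbf{Q}_k$ lower triangular gives $(\mathbf{Q}_k)_{11}^2 = (\mathbf{G}_k)_{11} = \mathbb{E}[1\cdot 1] = 1$, and since the Cholesky factor is taken with positive diagonal, $(\mathbf{Q}_k)_{11} = 1$; hence $\mathbb{E}[\boldsymbol{\psi}_k(X_k)] = \mathbf{e}_1$, as claimed.

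The main obstacle, mild as it is, lies entirely in the last paragraph: \eqref{4.10} is not a purely formal consequence of the whitening transformation but hinges on the special status of the constant first basis function --- which turns $\mathbf{G}_k\mathbf{e}_1$ into the mean vector --- combined with the triangular structure of the Cholesky factor and the normalization $(\mathbf{Q}_k)_{11}=1$ forced by $(\mathbf{G}_k)_{11}=1$. The second-moment identity \eqref{4.11}, in contrast, requires only linearity of expectation and the defining factorization \eqref{4.6}.
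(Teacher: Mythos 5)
Your proof is correct. The argument for \eqref{4.11} is identical to the paper's: linearity of expectation, the definition of $\mathbf{G}_k$, and the Cholesky factorization collapse everything to the identity. For \eqref{4.10}, however, you take a genuinely different (and more explicit) route. The paper first asserts that the first component of $\boldsymbol{\psi}_k(X_k)$ is identically $1$, and then reads \eqref{4.10} off from the first row of the orthonormality relation \eqref{4.11}, since $\mathbb{E}[\psi_{1}\psi_{j}]=\mathbb{E}[\psi_{j}]$ when $\psi_1\equiv 1$. You instead compute $\mathbb{E}[\mathbf{P}_k(X_k)]=\mathbf{G}_k\mathbf{e}_1$ from the constant first basis function, substitute $\mathbf{G}_k=\mathbf{Q}_k\mathbf{Q}_k^\intercal$ to get $\mathbb{E}[\boldsymbol{\psi}_k(X_k)]=\mathbf{Q}_k^\intercal\mathbf{e}_1$, and finish with the triangularity of $\mathbf{Q}_k$ and the normalization $(\mathbf{Q}_k)_{11}=1$. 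The two arguments rest on the same structural facts, but yours has the advantage of actually establishing what the paper only asks the reader to ``recognize'': the claim $\psi_{1,p_k,\boldsymbol{\xi}_k}^k\equiv 1$ itself requires exactly the triangular-Cholesky and $(\mathbf{G}_k)_{11}=1$ observations you supply, so your version is the more self-contained of the two, at the cost of a little extra matrix bookkeeping.
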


\begin{proof}
Using \eqref{4.8} in the whitening transformation \eqref{4.7},
\[
\begin{array}{rcl}
\mathbb{E}[\boldsymbol{\psi}_k(X_k) \boldsymbol{\psi}_k^\intercal(X_k)] & = &
\mathbf{Q}_k^{-1} \mathbb{E}[\mathbf{P}_k(X_k)\mathbf{P}_k^\intercal(X_k)] \mathbf{Q}_k^{-\intercal} \\
                                                                         & = &
\mathbf{Q}_k^{-1} \mathbf{G}_k \mathbf{Q}_k^{-\intercal} \\
                                                                         & = &
\mathbf{Q}_k^{-1}\mathbf{Q}_k \mathbf{Q}_k^{\intercal} \mathbf{Q}_k^{-\intercal} = \mathbf{I}_{n_k},
\end{array}
\]
obtaining \eqref{4.11}.  Recognize that $\psi_{1,p_k,\boldsymbol{\xi}_k}^k(X_k)$, the first element of $\boldsymbol{\psi}_k(X_k)$, is \emph{one}. Then, using \eqref{4.11}, the expectations of products between the first row of $\boldsymbol{\psi}_k(X_k)$ and all $n_k$ columns of $\boldsymbol{\psi}_k^\intercal(X_k)$ produce \eqref{4.10}.
\end{proof}

\section{Multivariate B-splines}
As the input vector $\mathbf{X}=(X_{1},\ldots,X_{N})^\intercal$ comprises independent random variables, its joint probability density function is the product of its marginal density functions.  Consequently, measure-consistent multivariate orthonormal B-splines can be easily constructed from the tensor-product of measure-consistent univariate B-splines.

\subsection{Tensor-product spline space}
For each $k=1,\ldots,N$, suppose the knot sequence $\boldsymbol{\xi}_k$ on the interval $\mathbb{A}^{\{k\}}=[a_k,b_k]$, number of basis functions $n_k$, and degree $p_k$ have been specified.  The associated vector of measure-consistent univariate orthonormal splines in $x_k$ is
\[
\boldsymbol{\psi}_k(x_k):=(\psi_{1,p_k,\boldsymbol{\xi}_k}^k(x_k),\ldots,
\psi_{n_k,p_k,\boldsymbol{\xi}_k}^k(x_k))^\intercal.
\]
Correspondingly, the spline space is $\mathcal{S}_{k,p_k,\boldsymbol{\xi}_k}$, as expressed by \eqref{3.7}.  To define tensor-product B-splines in $N$ variables and the associated spline space, define a multi-index $\mathbf{p}:=(p_1,\ldots,p_N) \in \mathbb{N}_0^N$, representing the degrees of splines in all $N$ coordinate directions.  Denote by
$\boldsymbol{\Xi}:=\{ \boldsymbol{\xi}_1,\ldots,\boldsymbol{\xi}_N \}$ a family of all $N$ knot sequences.  Because of the tensor nature of the resulting space, many properties of univariate splines carry over, described as follows.

\begin{definition}
Given $\mathbf{p}:=(p_1,\ldots,p_N)$ and $\boldsymbol{\Xi}:=\{ \boldsymbol{\xi}_1,\ldots,\boldsymbol{\xi}_N \}$, the tensor-product spline space, denoted by $\mathcal{S}_{\mathbf{p},\boldsymbol{\Xi}}$, is defined by
\begin{equation}
\mathcal{S}_{\mathbf{p},\boldsymbol{\Xi}} :=
\bigotimes_{k=1}^N \mathcal{S}_{k,p_k,\boldsymbol{\xi}_k},
\label{5.1}
\end{equation}
\label{d8}
where the symbol $\bigotimes$ stands for tensor product.
\end{definition}

It is clear from Definition \ref{d8} that $\mathcal{S}_{\mathbf{p},\boldsymbol{\Xi}}$ is a linear space of dimension $\prod_{k=1}^N n_k$. Here, $n_k$, the dimension of the spline space $\mathcal{S}_{k,p_k,\boldsymbol{\xi}_k}$, is obtained from \eqref{3.8} when each knot sequence is chosen according to \eqref{3.3}.  Each spline $g \in \mathcal{S}_{\mathbf{p},\boldsymbol{\Xi}}$ is defined on the $N$-dimensional rectangular domain
\[
\mathbb{A}^N:=\times_{k=1}^N \mathbb{A}^{\{k\}} = \times_{k=1}^N [a_k,b_k].
\]
Define two additional multi-indices $\mathbf{i}:=(i_1,\ldots,i_N) \in \mathbb{N}^N$ and $\mathbf{n}:=(n_1,\ldots,n_N) \in \mathbb{N}^N$, representing the knot indices and numbers of univariate basis functions, respectively, in all $N$ coordinate directions.  Associated with $\mathbf{i}$, define an index set
\[
\mathcal{I}_{\mathbf{n}} := \left\{ \mathbf{i}=(i_1,\ldots,i_N): 1 \le i_k \le n_k,~ k = 1,\ldots,N \right\}
\subset \mathbb{N}^N
\]
which has cardinality
\[
|\mathcal{I}_{\mathbf{n}}|=\displaystyle \prod_{k=1}^N n_k,
\]
thus matching the dimension of $\mathcal{S}_{\mathbf{p},\boldsymbol{\Xi}}$.  Then the partition defined by the knot sequences $\boldsymbol{\xi}_k$, $k=1,\ldots,N$, splits $\mathbb{A}^N$ into smaller $N$-dimensional rectangles
\[
\begin{array}{c}
\mathbb{A}_{\mathbf{i}}^N =
\left\{  \mathbf{x}: \zeta_{k,i_k} \le x_k < \zeta_{k,i_k+1}, ~k=1,\ldots,N \right\}, \\
\mathbf{i} \in \left\{ \mathbf{i}=(i_1,\ldots,i_N): 1 \le i_k \le r_k-1,~ k = 1,\ldots,N \right\} \subseteq
\mathcal{I}_{\mathbf{n}}. \rule{0pt}{0.2in}
\end{array}
\]
A mesh is defined by the partition of $\mathbb{A}^N$ into rectangular elements $\mathbb{A}_{\mathbf{i}}^N$.  Define the largest element size in each coordinate direction $k$ by
\[
h_k := \max_{1 \le l \le r_k-1} \left( \zeta_{k,l+1}-\zeta_{k,l} \right),~k=1,\ldots,N.
\]
Then, given the family of knot sequences $\boldsymbol{\Xi}=\{\boldsymbol{\xi}_1,\ldots,\boldsymbol{\xi}_N\}$,
\[
\mathbf{h}:=(h_1,\ldots,h_N) ~~\text{and}~~
h := \max_{1 \le k \le N} h_k
\]
define a vector of the largest element sizes in all $N$ coordinates and the global element size, respectively, for the domain $\mathbb{A}^N$.

\subsection{Tensor-product orthonormal B-splines}
Given the B-splines for all $N$ coordinate directions, a formal definition of tensor-product B-splines is as follows.

\begin{definition}
Let $\mathbf{X}:=(X_{1},\ldots,X_{N})^\intercal:(\Omega,\mathcal{F})\to(\mathbb{A}^{N},\mathcal{B}^{N})$ be a vector of $N \in \mathbb{N}$ input random variables fulfilling Assumption \ref{a1}.  Suppose the univariate orthonormal B-splines consistent with the marginal probability measures in all coordinate directions have been obtained as the sets $\{\psi_{1,p_k,\boldsymbol{\xi}_k}^k(x_k),\ldots,
\psi_{n_k,p_k,\boldsymbol{\xi}_k}^k(x_k)\}$, $k=1,\ldots,N$.  Then, for $\mathbf{p}=(p_1,\ldots,p_N) \in \mathbb{N}_0^N$ and  $\boldsymbol{\Xi}=\{\boldsymbol{\xi}_1,\ldots,\boldsymbol{\xi}_N\}$,
the multivariate orthonormal B-splines in $\mathbf{x}$ consistent with the probability measure $f_{\mathbf{X}}(\mathbf{x})d\mathbf{x}$ are defined as
\begin{equation}
\Psi_{\mathbf{i},\mathbf{p},\boldsymbol{\Xi}}(\mathbf{x}) :=
\displaystyle
\prod_{k=1}^N
\psi_{i_k,p_k,\boldsymbol{\xi}_k}^k(x_k),~~\mathbf{i}=(i_1,\ldots,i_N) \in \mathcal{I}_{\mathbf{n}}.
\label{5.2}
\end{equation}
\label{d9}
\end{definition}

\subsection{Statistical properties}
When the input random variables $X_1,\ldots,X_N$, instead of real variables $x_1,\ldots,x_N$, are inserted in the argument, the multivariate splines $\Psi_{\mathbf{i},\mathbf{p},\boldsymbol{\Xi}}(\mathbf{X})$, $\mathbf{i} \in \mathcal{I}_{\mathbf{n}}$, become functions of random input variables.  Therefore, it is important to establish their second-moment properties, to be exploited in Section 6.

\begin{proposition}
Let $\mathbf{X}:=(X_{1},\ldots,X_{N})^\intercal:(\Omega,\mathcal{F})\to(\mathbb{A}^{N},\mathcal{B}^{N})$ be a vector of $N \in \mathbb{N}$ input random variables fulfilling Assumption \ref{a1}.  Then the first- and second-order moments of multivariate orthonormal B-splines $\Psi_{\mathbf{i},\mathbf{p},\boldsymbol{\Xi}}(\mathbf{X})$, $\mathbf{i},\mathbf{j} \in \mathcal{I}_{\mathbf{n}}$, are
\begin{equation}
\mathbb{E} \left[ \Psi_{\mathbf{i},\mathbf{p},\boldsymbol{\Xi}}(\mathbf{X}) \right] =
\begin{cases}
1,              & \mathbf{i} =   \boldsymbol{1} :=(1,\ldots,1), \\
0,              & \mathbf{i} \ne \boldsymbol{1},
\end{cases}
\label{5.3}
\end{equation}
and
\begin{equation}
\mathbb{E} \left[ \Psi_{\mathbf{i},\mathbf{p},\boldsymbol{\Xi}}(\mathbf{X})
\Psi_{\mathbf{j},\mathbf{p},\boldsymbol{\Xi}}(\mathbf{X}) \right] =
\begin{cases}
1,              & \mathbf{i} =   \mathbf{j}, \\
0,              & \mathbf{i} \ne \mathbf{j},
\end{cases}
\label{5.4}
\end{equation}
respectively.
\label{p7}
\end{proposition}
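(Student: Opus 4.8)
The plan is to reduce both identities to the univariate second-moment properties already established in Proposition \ref{p6}, exploiting the product structure of the multivariate B-splines in \eqref{5.2} together with the statistical independence of the components of $\mathbf{X}$ granted by Assumption \ref{a1}. As a preliminary, I would note that each orthonormal B-spline $\psi_{i_k,p_k,\boldsymbol{\xi}_k}^k$ is, by Proposition \ref{p5}, a finite linear combination of the bounded B-splines $B_{i_k,p_k,\boldsymbol{\xi}_k}^k$ on the compact interval $[a_k,b_k]$, hence itself bounded; consequently $\Psi_{\mathbf{i},\mathbf{p},\boldsymbol{\Xi}}(\mathbf{X})$ and the product $\Psi_{\mathbf{i},\mathbf{p},\boldsymbol{\Xi}}(\mathbf{X})\Psi_{\mathbf{j},\mathbf{p},\boldsymbol{\Xi}}(\mathbf{X})$ are bounded random variables, so every expectation below is finite and the factorization of expectations over independent coordinates is legitimate.

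For \eqref{5.3}, I would substitute \eqref{5.2} and invoke independence: since $X_1,\ldots,X_N$ are independent, the random variables $\psi_{i_k,p_k,\boldsymbol{\xi}_k}^k(X_k)$, $k=1,\ldots,N$, are independent as measurable functions of independent variables, whence
\[
\mathbb{E}\left[\Psi_{\mathbf{i},\mathbf{p},\boldsymbol{\Xi}}(\mathbf{X})\right]
= \prod_{k=1}^N \mathbb{E}\left[\psi_{i_k,p_k,\boldsymbol{\xi}_k}^k(X_k)\right].
\]
By \eqref{4.10}, the $k$th factor equals $1$ when $i_k=1$ and $0$ when $i_k\neq 1$; the product therefore equals $1$ precisely when $i_k=1$ for every $k$, i.e.\ $\mathbf{i}=\boldsymbol{1}$, and equals $0$ otherwise, since a single vanishing factor suffices. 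This is \eqref{5.3}. For \eqref{5.4}, I would proceed identically, grouping the two products coordinatewise and using independence once more to get
\[
\mathbb{E}\left[\Psi_{\mathbf{i},\mathbf{p},\boldsymbol{\Xi}}(\mathbf{X})\Psi_{\mathbf{j},\mathbf{p},\boldsymbol{\Xi}}(\mathbf{X})\right]
= \prod_{k=1}^N \mathbb{E}\left[\psi_{i_k,p_k,\boldsymbol{\xi}_k}^k(X_k)\,\psi_{j_k,p_k,\boldsymbol{\xi}_k}^k(X_k)\right],
\]
and then read off from \eqref{4.11} that the $k$th factor is the $(i_k,j_k)$ entry of $\mathbf{I}_{n_k}$, hence $1$ if $i_k=j_k$ and $0$ otherwise; the product equals $1$ exactly when $i_k=j_k$ for all $k$, that is $\mathbf{i}=\mathbf{j}$, and $0$ otherwise, giving \eqref{5.4}.

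Since the ingredients are only the factorization of expectations under independence and the already-proved univariate normalization, there is no substantive obstacle; the single point meriting a line of care is the measurability/integrability justification for splitting the expectation into a product of one-dimensional expectations, which I dispatch at the outset via the boundedness of the B-splines on the compact domain $\mathbb{A}^N$.
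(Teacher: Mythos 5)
Your argument is correct and is exactly the route the paper intends: the paper itself dispatches Proposition \ref{p7} in one sentence by appealing to Proposition \ref{p6} together with statistical independence, which is precisely the factorization-plus-univariate-moments argument you carry out in detail. Your added remark on boundedness of the orthonormal B-splines to justify the product factorization is a harmless (and welcome) extra precaution not spelled out in the paper.
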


The statistical properties of univariate orthonormal B-splines in Proposition \ref{p6},
with statistical independence in mind, lead to the result of Proposition \ref{p7}.

\subsection{Orthonormal basis}
The following proposition shows that the multivariate orthonormal splines from Definition \ref{d9} span the spline space of interest.

\begin{proposition}
Let $\mathbf{X}:=(X_{1},\ldots,X_{N})^\intercal:(\Omega,\mathcal{F})\to(\mathbb{A}^{N},\mathcal{B}^{N})$ be a vector of $N \in \mathbb{N}$ input random variables fulfilling Assumption \ref{a1}.  Then
$\{\Psi_{\mathbf{i},\mathbf{p},\boldsymbol{\Xi}}(\mathbf{x}): \mathbf{i} \in \mathcal{I}_{\mathbf{n}}\}$, the set of multivariate orthonormal B-splines for a chosen degree $\mathbf{p}$ and family of knot sequences $\boldsymbol{\Xi}$, consistent with the probability measure $f_{\mathbf{X}}(\mathbf{x})d\mathbf{x}$, is a basis of $\mathcal{S}_{\mathbf{p},\boldsymbol{\Xi}}$.  That is,
\begin{equation}
\mathcal{S}_{\mathbf{p},\boldsymbol{\Xi}} =
 \operatorname{span} \left\{ \Psi_{\mathbf{i},\mathbf{p},\boldsymbol{\Xi}}(\mathbf{x}) \right\}_{\mathbf{i} \in \mathcal{I}_{\mathbf{n}}} =
\bigotimes_{k=1}^N  \operatorname{span} \left\{ \psi_{i_k,p_k,\boldsymbol{\xi}_k}^k(x_k) \right\}_{i_k=1,\ldots,n_k},~~|\mathcal{I}_{\mathbf{n}}|=\prod_{k=1}^N n_k.
\label{5.5}
\end{equation}
\label{p8}
\end{proposition}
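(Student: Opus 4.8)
The plan is to combine three ingredients already established: (i) each univariate orthonormal B-spline $\psi_{i_k,p_k,\boldsymbol{\xi}_k}^k$ belongs to $\mathcal{S}_{k,p_k,\boldsymbol{\xi}_k}$ and these functions span that space (Proposition \ref{p5}); (ii) the products $\Psi_{\mathbf{i},\mathbf{p},\boldsymbol{\Xi}}$ are mutually orthonormal with respect to $f_{\mathbf{X}}(\mathbf{x})d\mathbf{x}$, by \eqref{5.4} of Proposition \ref{p7}, hence linearly independent; and (iii) the cardinality $|\mathcal{I}_{\mathbf{n}}|=\prod_{k=1}^N n_k$ equals $\dim\mathcal{S}_{\mathbf{p},\boldsymbol{\Xi}}$ by Definition \ref{d8} together with \eqref{3.8}. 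The conclusion will then follow from the elementary fact that a linearly independent family of the right cardinality in a finite-dimensional space is a basis.

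First I would verify the inclusion $\operatorname{span}\{\Psi_{\mathbf{i},\mathbf{p},\boldsymbol{\Xi}}(\mathbf{x})\}_{\mathbf{i}\in\mathcal{I}_{\mathbf{n}}}\subseteq\mathcal{S}_{\mathbf{p},\boldsymbol{\Xi}}$. By Definition \ref{d9}, $\Psi_{\mathbf{i},\mathbf{p},\boldsymbol{\Xi}}(\mathbf{x})=\prod_{k=1}^N\psi_{i_k,p_k,\boldsymbol{\xi}_k}^k(x_k)$ is an elementary tensor whose $k$th factor lies in $\mathcal{S}_{k,p_k,\boldsymbol{\xi}_k}$ by Proposition \ref{p5}; by the defining relation \eqref{5.1} of the tensor-product space, such a product belongs to $\bigotimes_{k=1}^N\mathcal{S}_{k,p_k,\boldsymbol{\xi}_k}=\mathcal{S}_{\mathbf{p},\boldsymbol{\Xi}}$, and hence so does any finite linear combination.

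Next I would establish linear independence. Suppose $\sum_{\mathbf{i}\in\mathcal{I}_{\mathbf{n}}}c_{\mathbf{i}}\Psi_{\mathbf{i},\mathbf{p},\boldsymbol{\Xi}}(\mathbf{X})=0$ almost surely for constants $c_{\mathbf{i}}\in\mathbb{R}$. Multiplying by $\Psi_{\mathbf{j},\mathbf{p},\boldsymbol{\Xi}}(\mathbf{X})$, taking expectations, and invoking the orthonormality relation \eqref{5.4} gives $c_{\mathbf{j}}=0$ for every $\mathbf{j}\in\mathcal{I}_{\mathbf{n}}$. Thus $\{\Psi_{\mathbf{i},\mathbf{p},\boldsymbol{\Xi}}(\mathbf{x})\}_{\mathbf{i}\in\mathcal{I}_{\mathbf{n}}}$ is a linearly independent subset of $\mathcal{S}_{\mathbf{p},\boldsymbol{\Xi}}$. (As an alternative route, one may argue directly from Property \ref{pr3} applied in each coordinate together with the tensor structure, regrouping the vanishing combination coordinate by coordinate; but the $L^2$ argument via Proposition \ref{p7} is shorter.)

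Finally I would close by a dimension count: the family has size $|\mathcal{I}_{\mathbf{n}}|=\prod_{k=1}^N n_k$, which by Definition \ref{d8} and \eqref{3.8} equals $\dim\mathcal{S}_{\mathbf{p},\boldsymbol{\Xi}}$; a linearly independent family of that size inside the finite-dimensional space $\mathcal{S}_{\mathbf{p},\boldsymbol{\Xi}}$ is therefore a basis, yielding the first equality in \eqref{5.5}, while the second equality is \eqref{5.1} rewritten through Proposition \ref{p5}. The only point needing slight care — the mild "main obstacle" — is making precise that the elementary tensors $\prod_{k=1}^N\psi_{i_k,p_k,\boldsymbol{\xi}_k}^k$ not merely lie in but actually exhaust $\bigotimes_{k=1}^N\mathcal{S}_{k,p_k,\boldsymbol{\xi}_k}$; this is the standard statement that a tensor product of bases is a basis of the tensor-product space, and it is cleanest to obtain it, as above, from the dimension identity rather than from an explicit spanning computation.
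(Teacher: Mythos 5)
Your proposal is correct and follows essentially the same route as the paper: the paper likewise derives linear independence of $\{\Psi_{\mathbf{i},\mathbf{p},\boldsymbol{\Xi}}\}_{\mathbf{i}\in\mathcal{I}_{\mathbf{n}}}$ from the orthonormality in Proposition \ref{p7} and then concludes "readily," which is precisely your dimension-count closing via $|\mathcal{I}_{\mathbf{n}}|=\prod_{k=1}^N n_k=\dim\mathcal{S}_{\mathbf{p},\boldsymbol{\Xi}}$. Your write-up simply makes explicit the inclusion and counting steps that the paper leaves implicit.
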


The statistical properties in Proposition \ref{p7} result in linear independence of the elements of $\{ \Psi_{\mathbf{i},\mathbf{p},\boldsymbol{\Xi}}(\mathbf{x})\}_{\mathbf{i} \in \mathcal{I}_{\mathbf{n}}}$.  The desired result is obtained readily.

\section{Spline chaos expansion}
Given an input random vector $\mathbf{X}:=(X_{1},\ldots,X_{N})^\intercal:(\Omega,\mathcal{F})\to(\mathbb{A}^{N},\mathcal{B}^{N})$
with the probability density function $f_{\mathbf{X}}({\mathbf{x}})$ on $\mathbb{A}^N \subset \mathbb{R}^N$, let $y(\mathbf{X}):=y(X_{1},\ldots,X_{N})$ be a real-valued, square-integrable, measurable transformation on $(\Omega, \mathcal{F})$.  Here, $y: \mathbb{A}^N \to \mathbb{R}$ represents an output function from a mathematical model, describing relevant stochastic performance of a complex system.  Associated with the image probability space $(\mathbb{A}^N,\mathcal{B}^{N},f_{\mathbf{X}}d\mathbf{x})$,
define
\[
L^2(\mathbb{A}^N,\mathcal{B}^{N},f_{\mathbf{X}}d\mathbf{x}):=
\left\{
y: \mathbb{A}^N \to \mathbb{R}:
~\int_{\mathbb{A}^N} \left| y(\mathbf{x})\right|^2  f_{\mathbf{X}}({\mathbf{x}})d\mathbf{x} < \infty
\right\}
\]
to be a weighted $L^2$-space of interest.  Clearly, $L^2(\mathbb{A}^N,\mathcal{B}^{N},f_{\mathbf{X}}d\mathbf{x})$ is a Hilbert space, which is endowed with the inner product
\[
\left( y(\mathbf{x}),z(\mathbf{x}) \right)_{L^2(\mathbb{A}^N,\mathcal{B}^{N},f_{\mathbf{X}}d\mathbf{x})}:=
\int_{\mathbb{A}^N} y(\mathbf{x})z(\mathbf{x})f_{\mathbf{X}}(\mathbf{x})d\mathbf{x}
\]
and induced norm
\[
\|y(\mathbf{x})\|_{L^2(\mathbb{A}^N,\mathcal{B}^{N},f_{\mathbf{X}}d\mathbf{x})}=
\sqrt{(y(\mathbf{x}),y(\mathbf{x}))_{L^2(\mathbb{A}^N,\mathcal{B}^{N},f_{\mathbf{X}}d\mathbf{x})}}.
\]
Similarly, for the abstract probability space $(\Omega,\mathcal{F},\mathbb{P})$, there is an isomorphic Hilbert space
\[
L^2(\Omega,\mathcal{F},\mathbb{P}):=
\left\{Y:\Omega \to \mathbb{R}: \int_{\Omega} \left|y(\mathbf{X}(\omega))\right|^2 d\mathbb{P}(\omega) < \infty \right\}
\]
of equivalent classes of output random variables $Y=y(\mathbf{X})$ with the corresponding inner product
\[
\left( y(\mathbf{X}),z(\mathbf{X}) \right)_{L^2(\Omega, \mathcal{F}, \mathbb{P})}:=
\int_{\Omega} y(\mathbf{X}(\omega))z(\mathbf{X}(\omega))d\mathbb{P}(\omega)
\]
and norm
\[
\|y(\mathbf{X})\|_{L^2(\Omega, \mathcal{F}, \mathbb{P})}:=
\sqrt{(y(\mathbf{X}),y(\mathbf{X}))_{L^2(\Omega, \mathcal{F}, \mathbb{P})}}.
\]
It is elementary to show that $y(\mathbf{X}(\omega)) \in L^2(\Omega,\mathcal{F},\mathbb{P})$ if and only if $y(\mathbf{x}) \in L^2(\mathbb{A}^N,\mathcal{B}^{N},f_{\mathbf{X}}d\mathbf{x})$.

\subsection{SCE approximation}
An SCE approximation of a square-integrable random variable $y(\mathbf{X}) \in L^2(\Omega,\mathcal{F},\mathbb{P})$ is simply its orthogonal projection onto the spline space $\mathcal{S}_{\mathbf{p},\boldsymbol{\Xi}}$, formally presented as follows.

\begin{theorem}
Let $\mathbf{X}:=(X_{1},\ldots,X_{N})^\intercal:(\Omega,\mathcal{F})\to(\mathbb{A}^{N},\mathcal{B}^{N})$ be a vector of $N \in \mathbb{N}$ input random variables fulfilling Assumption \ref{a1}.  Given a degree $\mathbf{p}$ and a family of knot sequences $\boldsymbol{\Xi}$, recall that
$\{\Psi_{\mathbf{i},\mathbf{p},\boldsymbol{\Xi}}(\mathbf{X}): \mathbf{i} \in \mathcal{I}_{\mathbf{n}}\}$ represents the set comprising multivariate orthonormal B-splines that is consistent with the probability measure $f_{\mathbf{X}}(\mathbf{x})d\mathbf{x}$.  Then, for any random variable $y(\mathbf{X}) \in L^2(\Omega, \mathcal{F}, \mathbb{P})$, there exists an orthogonal expansion in multivariate orthonormal splines in $\mathbf{X}$, referred to as an SCE approximation
\begin{equation}
y_{\mathbf{p},\boldsymbol{\Xi}}(\mathbf{X}) :=
\sum_{\mathbf{i} \in \mathcal{I}_{\mathbf{n}}} C_{\mathbf{i},\mathbf{p},\boldsymbol{\Xi}}
\Psi_{\mathbf{i},\mathbf{p},\boldsymbol{\Xi}}(\mathbf{X})
\label{6.1}
\end{equation}
of $y(\mathbf{X})$, where the SCE expansion coefficients $C_{\mathbf{i},\mathbf{p},\boldsymbol{\Xi}} \in \mathbb{R}$, $\mathbf{i} \in \mathcal{I}_{\mathbf{n}}$, are defined as
\begin{equation}
C_{\mathbf{i},\mathbf{p},\boldsymbol{\Xi}} :=
\mathbb{E} \left[  y(\mathbf{X}) \Psi_{\mathbf{i},\mathbf{p},\boldsymbol{\Xi}}(\mathbf{X}) \right] :=
\int_{\mathbb{A}^N} y(\mathbf{x})
\Psi_{\mathbf{i},\mathbf{p},\boldsymbol{\Xi}}(\mathbf{x}) f_{\mathbf{X}}(\mathbf{x})d\mathbf{x},~~
\mathbf{i} \in \mathcal{I}_{\mathbf{n}}.
\label{6.2}
\end{equation}
Furthermore, the SCE approximation is the best approximation of $y(\mathbf{X})$ in the sense that
\begin{equation}
\displaystyle
\mathbb{E}\left[ y(\mathbf{X})-y_{\mathbf{p},\boldsymbol{\Xi}}(\mathbf{X}) \right]^2 =
\inf_{g \in \mathcal{S}_{\mathbf{p},\boldsymbol{\Xi}}}
\mathbb{E}\left[ y(\mathbf{X})-g(\mathbf{X}) \right]^2,
\label{6.3}
\end{equation}
or, equivalently,
\begin{equation}
\displaystyle
{\left\| y(\mathbf{x})-y_{\mathbf{p},\boldsymbol{\Xi}}(\mathbf{x}) \right\|}_{L^2(\mathbb{A}^N,\mathcal{B}^{N},f_{\mathbf{X}}d\mathbf{x})} =
\inf_{g \in \mathcal{S}_{\mathbf{p},\boldsymbol{\Xi}}}
{\left\| y(\mathbf{X})-g(\mathbf{x}) \right\|}_{L^2(\mathbb{A}^N,\mathcal{B}^{N},f_{\mathbf{X}}d\mathbf{x})}.
\label{6.4}
\end{equation}
\label{t1}
\end{theorem}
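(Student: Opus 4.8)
The plan is to reduce the statement to the finite-dimensional version of the Hilbert-space projection theorem, specialized to the orthonormal basis furnished by Proposition \ref{p8}. I would work in the image Hilbert space $H := L^2(\mathbb{A}^N,\mathcal{B}^{N},f_{\mathbf{X}}d\mathbf{x})$, since the remark at the end of Section 6 gives that $y(\mathbf{X}) \in L^2(\Omega,\mathcal{F},\mathbb{P})$ if and only if $y(\mathbf{x}) \in H$ and that the two spaces are isometrically isomorphic; hence it suffices to establish everything in $H$ and transport it back. First I would record the ingredients: by Proposition \ref{p8}, $\mathcal{S}_{\mathbf{p},\boldsymbol{\Xi}} = \operatorname{span}\{\Psi_{\mathbf{i},\mathbf{p},\boldsymbol{\Xi}}\}_{\mathbf{i}\in\mathcal{I}_{\mathbf{n}}}$ is a subspace of $H$ of finite dimension $\prod_{k=1}^N n_k$; and by Proposition \ref{p7}, in particular \eqref{5.4}, the family $\{\Psi_{\mathbf{i},\mathbf{p},\boldsymbol{\Xi}}\}_{\mathbf{i}\in\mathcal{I}_{\mathbf{n}}}$ is orthonormal with respect to $(\cdot,\cdot)_H$, because that inner product is precisely the expectation $\mathbb{E}[\,\cdot\;\cdot\,]$. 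I would also check that each $C_{\mathbf{i},\mathbf{p},\boldsymbol{\Xi}}$ in \eqref{6.2} is well defined and finite: each $\Psi_{\mathbf{i},\mathbf{p},\boldsymbol{\Xi}}$ is bounded (a finite product of bounded orthonormal B-splines), hence lies in $H$, so Cauchy--Schwarz gives $|C_{\mathbf{i},\mathbf{p},\boldsymbol{\Xi}}| \le \|y\|_H \, \|\Psi_{\mathbf{i},\mathbf{p},\boldsymbol{\Xi}}\|_H = \|y\|_H < \infty$.

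Next I would carry out a completing-the-squares computation directly, which delivers existence of the expansion, the formula for its coefficients, and the optimality simultaneously, without invoking the projection theorem as a black box. For an arbitrary $g = \sum_{\mathbf{i}\in\mathcal{I}_{\mathbf{n}}} d_{\mathbf{i}}\,\Psi_{\mathbf{i},\mathbf{p},\boldsymbol{\Xi}} \in \mathcal{S}_{\mathbf{p},\boldsymbol{\Xi}}$ with $d_{\mathbf{i}}\in\mathbb{R}$, using linearity of $\mathbb{E}$, the definition \eqref{6.2}, and the orthonormality \eqref{5.4}, one expands
\[
\mathbb{E}[y(\mathbf{X}) - g(\mathbf{X})]^2 = \mathbb{E}[y(\mathbf{X})^2] - 2\sum_{\mathbf{i}\in\mathcal{I}_{\mathbf{n}}} d_{\mathbf{i}}\, C_{\mathbf{i},\mathbf{p},\boldsymbol{\Xi}} + \sum_{\mathbf{i}\in\mathcal{I}_{\mathbf{n}}} d_{\mathbf{i}}^2,
\]
and completing the square term by term gives
\[
\mathbb{E}[y(\mathbf{X}) - g(\mathbf{X})]^2 = \mathbb{E}[y(\mathbf{X})^2] - \sum_{\mathbf{i}\in\mathcal{I}_{\mathbf{n}}} C_{\mathbf{i},\mathbf{p},\boldsymbol{\Xi}}^2 + \sum_{\mathbf{i}\in\mathcal{I}_{\mathbf{n}}} \bigl(d_{\mathbf{i}} - C_{\mathbf{i},\mathbf{p},\boldsymbol{\Xi}}\bigr)^2 .
\]
Only the last sum depends on $g$, and it is nonnegative, so the right-hand side is minimized exactly and uniquely at $d_{\mathbf{i}} = C_{\mathbf{i},\mathbf{p},\boldsymbol{\Xi}}$ for every $\mathbf{i}$. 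This identifies the $y_{\mathbf{p},\boldsymbol{\Xi}}$ defined by \eqref{6.1}--\eqref{6.2} as the unique minimizer over $\mathcal{S}_{\mathbf{p},\boldsymbol{\Xi}}$, which is \eqref{6.3}; the equivalent form \eqref{6.4} then follows from the isometry between $L^2(\Omega,\mathcal{F},\mathbb{P})$ and $H$.

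There is no genuinely hard step here: the claim is the finite-dimensional projection theorem applied to this particular basis. The only points that warrant care are bookkeeping ones, namely (i) noting that the inner product of $H$ coincides with $\mathbb{E}[\,\cdot\;\cdot\,]$, so that \eqref{5.4} is exactly $L^2$-orthonormality; (ii) confirming finiteness of the coefficients via boundedness of the multivariate B-splines together with $y\in H$; and (iii) being explicit that the minimizer is unique, so that \eqref{6.3} characterizes $y_{\mathbf{p},\boldsymbol{\Xi}}$ rather than merely being satisfied by it. Transporting the conclusion back to the abstract probability space is immediate from the isomorphism noted at the end of Section 6.
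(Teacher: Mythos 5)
Your proof is correct, and it follows the same basic strategy as the paper: parametrize an arbitrary $g \in \mathcal{S}_{\mathbf{p},\boldsymbol{\Xi}}$ by its coefficients in the orthonormal basis $\{\Psi_{\mathbf{i},\mathbf{p},\boldsymbol{\Xi}}\}_{\mathbf{i}\in\mathcal{I}_{\mathbf{n}}}$ and minimize the resulting quadratic in those coefficients, using Proposition \ref{p7} to collapse the cross terms. The difference is purely in how the minimization is executed. The paper differentiates $\mathbb{E}[y(\mathbf{X})-g(\mathbf{X})]^2$ with respect to each coefficient (interchanging the derivative with the expectation and the expectation with the finite sum) and sets the partial derivatives to zero, concluding that the stationary point gives both the coefficient formula \eqref{6.2} and the optimality \eqref{6.3}. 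You instead complete the square, obtaining
\[
\mathbb{E}[y(\mathbf{X})-g(\mathbf{X})]^2 = \mathbb{E}[y^2(\mathbf{X})] - \sum_{\mathbf{i}\in\mathcal{I}_{\mathbf{n}}} C_{\mathbf{i},\mathbf{p},\boldsymbol{\Xi}}^2 + \sum_{\mathbf{i}\in\mathcal{I}_{\mathbf{n}}}\bigl(d_{\mathbf{i}}-C_{\mathbf{i},\mathbf{p},\boldsymbol{\Xi}}\bigr)^2.
\]
This buys you three small but genuine advantages: it proves that the stationary point is a \emph{global} minimum rather than merely a critical point (the paper's argument tacitly relies on convexity of the quadratic without saying so); it gives uniqueness of the minimizer for free; and it avoids having to justify the interchange of differentiation and expectation. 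The paper's route, on the other hand, produces the coefficient formula \eqref{6.2} as the direct outcome of the stationarity condition, which reads a bit more like a derivation than a verification. Both are valid; yours is the more airtight of the two.
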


\begin{proof}
Consider an arbitrary function $y(\mathbf{x}) \in L^2(\mathbb{A}^N,\mathcal{B}^{N},f_{\mathbf{X}}d\mathbf{x})$. Then an orthogonal projection operator $P_{\mathcal{S}_{\mathbf{p},\boldsymbol{\Xi}}}: L^2(\mathbb{A}^N,\mathcal{B}^{N},f_{\mathbf{X}}d\mathbf{x})\to \mathcal{S}_{\mathbf{p},\boldsymbol{\Xi}}$, defined by

\begin{equation}
P_{\mathcal{S}_{\mathbf{p},\boldsymbol{\Xi}}} y :=
\sum_{\mathbf{i} \in \mathcal{I}_{\mathbf{n}}} C_{\mathbf{i},\mathbf{p},\boldsymbol{\Xi}}
\Psi_{\mathbf{i},\mathbf{p},\boldsymbol{\Xi}}(\mathbf{x}),
\label{6.5}
\end{equation}
can be constructed. By definition of the random vector $\mathbf{X}$, the sequence
$\{\Psi_{\mathbf{i},\mathbf{p},\boldsymbol{\Xi}}(\mathbf{X})\}_{\mathbf{i} \in \mathcal{I}_{\mathbf{n}}}$ is a basis of the spline subspace $\mathcal{S}_{\mathbf{p},\boldsymbol{\Xi}}$ of $L^2(\Omega,\mathcal{F},\mathbb{P})$, inheriting the properties of the basis
$\{\Psi_{\mathbf{i},\mathbf{p},\boldsymbol{\Xi}}(\mathbf{x})\}_{\mathbf{i} \in \mathcal{I}_{\mathbf{n}}}$ of the spline subspace $\mathcal{S}_{\mathbf{p},\boldsymbol{\Xi}}$ of $L^2(\mathbb{A}^N,\mathcal{B}^{N},f_{\mathbf{X}}d\mathbf{x})$.
\footnote{With a certain abuse of notation, $\mathcal{S}_{\mathbf{p},\boldsymbol{\Xi}}$ is used here as a set of spline functions of both real variables ($\mathbf{x}$) and random variables ($\mathbf{X}$).}
Therefore, \eqref{6.5} leads to the expansion in \eqref{6.1}.

For deriving the expression of the expansion coefficients, define a second moment
\begin{equation}
e_{\text{SCE}}:= \mathbb{E}\Biggl[ y(\mathbf{X}) -
\sum_{\mathbf{i} \in \mathcal{I}_{\mathbf{n}}} C_{\mathbf{i},\mathbf{p},\boldsymbol{\Xi}}
\Psi_{\mathbf{i},\mathbf{p},\boldsymbol{\Xi}}(\mathbf{X})
\Biggr]^2
\label{6.6}
\end{equation}
of the difference between $y(\mathbf{X})$ and its SCE approximation.  Differentiate both sides of \eqref{6.6} with respect to $C_{\mathbf{i},\mathbf{p},\boldsymbol{\Xi}}$, $\mathbf{i} \in \mathcal{I}_{\mathbf{n}}$, to write
\begin{equation}
\begin{array}{rcl}
\displaystyle
\frac{\partial e_{\text{SCE}}}{\partial C_{\mathbf{i},\mathbf{p},\boldsymbol{\Xi}}}
& = &
\displaystyle
\frac{\partial}{\partial C_{\mathbf{i},\mathbf{p},\boldsymbol{\Xi}}}
\mathbb{E} \Biggl[  y(\mathbf{X}) - \sum_{\mathbf{j} \in \mathcal{I}_{\mathbf{n}}}
C_{\mathbf{j},\mathbf{p},\boldsymbol{\Xi}} \Psi_{\mathbf{j},\mathbf{p},\boldsymbol{\Xi}}(\mathbf{X}) \Biggl]^2
 \\
&  = &
\displaystyle
\mathbb{E} \Biggl[
\frac{\partial}{\partial C_{\mathbf{i},\mathbf{p},\boldsymbol{\Xi}}} \Biggl\{
  y(\mathbf{X}) - \sum_{\mathbf{j} \in \mathcal{I}_{\mathbf{n}}}
C_{\mathbf{j},\mathbf{p},\boldsymbol{\Xi}} \Psi_{\mathbf{j},\mathbf{p},\boldsymbol{\Xi}}(\mathbf{X}) \Biggr\}^2
\Biggr]
 \\
&  = &
\displaystyle
 2 \mathbb{E} \Biggl[
 \Biggl\{
\sum_{\mathbf{j} \in \mathcal{I}_{\mathbf{n}}}
C_{\mathbf{j},\mathbf{p},\boldsymbol{\Xi}} \Psi_{\mathbf{j},\mathbf{p},\boldsymbol{\Xi}}(\mathbf{X}) -  y(\mathbf{X}) \Biggr\}
\Psi_{\mathbf{i},\mathbf{p},\boldsymbol{\Xi}}(\mathbf{X})
 \Biggr]
 \\
&  = &
\displaystyle
 2 \Biggl\{
\sum_{\mathbf{j} \in \mathcal{I}_{\mathbf{n}}}
C_{\mathbf{j},\mathbf{p},\boldsymbol{\Xi}}
\mathbb{E} \left[ \Psi_{\mathbf{i},\mathbf{p},\boldsymbol{\Xi}}(\mathbf{X})
\Psi_{\mathbf{j},\mathbf{p},\boldsymbol{\Xi}}(\mathbf{X}) \right] -
 \mathbb{E} \left[ y(\mathbf{X}) \Psi_{\mathbf{i},\mathbf{p},\boldsymbol{\Xi}}(\mathbf{X}) \right]
\Biggr\}
 \\
&  = &
\displaystyle
 2 \Biggl\{
C_{\mathbf{i},\mathbf{p},\boldsymbol{\Xi}} -
\mathbb{E} \left[ y(\mathbf{X}) \Psi_{\mathbf{i},\mathbf{p},\boldsymbol{\Xi}}(\mathbf{X}) \right]
\Biggr\}
.
\end{array}
\label{6.7}
\end{equation}
Here, the second, third, fourth, and last lines are obtained by interchanging the differential and expectation operators, performing the differentiation, swapping the expectation and summation operators, and applying Proposition \ref{p7}, respectively. Setting ${\partial e_{\text{SCE}}}/{\partial C_{\mathbf{i},\mathbf{p},\boldsymbol{\Xi}}}=0$ in \eqref{6.7} produces the desired result in \eqref{6.2}.

Any spline function $g \in \mathcal{S}_{\mathbf{p},\boldsymbol{\Xi}}$ can be expressed by
\begin{equation}
g(\mathbf{X}) =
\sum_{\mathbf{i} \in \mathcal{I}_{\mathbf{n}}} \bar{C}_{\mathbf{i},\mathbf{p},\boldsymbol{\Xi}}
\Psi_{\mathbf{i},\mathbf{p},\boldsymbol{\Xi}}(\mathbf{X})
\label{6.8}
\end{equation}
with some real-valued coefficients $\bar{C}_{\mathbf{i},\mathbf{p},\boldsymbol{\Xi}}$, $\mathbf{i} \in \mathcal{I}_{\mathbf{n}}$.  To minimize $\mathbb{E}[ \{y(\mathbf{X})-g(\mathbf{X})\}^2]$, its derivatives with respect to the coefficients must be \emph{zero}, that is,
\begin{equation}
\displaystyle
\frac{\partial}{\partial \bar{C}_{\mathbf{i},\mathbf{p},\boldsymbol{\Xi}}}
\mathbb{E} \left[ \{y(\mathbf{X}) - g(\mathbf{X}) \}^2 \right] =
\displaystyle
\frac{\partial}{\partial \bar{C}_{\mathbf{i},\mathbf{p},\boldsymbol{\Xi}}}
\mathbb{E} \left[ \left\{y(\mathbf{X}) -
\sum_{\mathbf{i} \in \mathcal{I}_{\mathbf{n}}} \bar{C}_{\mathbf{i},\mathbf{p},\boldsymbol{\Xi}}
\Psi_{\mathbf{i},\mathbf{p},\boldsymbol{\Xi}}(\mathbf{X}) \right\}^2 \right] = 0,
~~\mathbf{i} \in \mathcal{I}_{\mathbf{n}}.
\label{6.9}
\end{equation}
From \eqref{6.7} and the following text, the derivatives are \emph{zero} only when the coefficients $\bar{C}_{\mathbf{i},\mathbf{p},\boldsymbol{\Xi}}$, $\mathbf{i} \in \mathcal{I}_{\mathbf{n}}$, match
the expansion coefficients defined in \eqref{6.2}.  Therefore, the SCE approximation is the best one, as claimed.
\end{proof}

\begin{proposition}
For any $y(\mathbf{X}) \in L^2(\Omega,\mathcal{F},\mathbb{P})$, let
$y_{\mathbf{p},\boldsymbol{\Xi}}(\mathbf{X})$ be the SCE approximation associated with a chosen degree $\mathbf{p}$ and family of knot sequences $\boldsymbol{\Xi}$.  Then the truncation error $y(\mathbf{X})-y_{\mathbf{p},\boldsymbol{\Xi}}(\mathbf{X})$ is orthogonal to the subspace $\mathcal{S}_{\mathbf{p},\boldsymbol{\Xi}} \subset L^2(\Omega,\mathcal{F},\mathbb{P})$.
\label{p9}
\end{proposition}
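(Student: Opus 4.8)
The plan is to verify the orthogonality directly against the spanning set of $\mathcal{S}_{\mathbf{p},\boldsymbol{\Xi}}$, namely the multivariate orthonormal B-splines $\{\Psi_{\mathbf{j},\mathbf{p},\boldsymbol{\Xi}}(\mathbf{X})\}_{\mathbf{j}\in\mathcal{I}_{\mathbf{n}}}$ supplied by Proposition~\ref{p8}. Since the inner product on $L^2(\Omega,\mathcal{F},\mathbb{P})$ is bilinear and $\mathcal{S}_{\mathbf{p},\boldsymbol{\Xi}}$ is the finite-dimensional span of these basis functions, it suffices to show that $\mathbb{E}[(y(\mathbf{X})-y_{\mathbf{p},\boldsymbol{\Xi}}(\mathbf{X}))\,\Psi_{\mathbf{j},\mathbf{p},\boldsymbol{\Xi}}(\mathbf{X})]=0$ for every $\mathbf{j}\in\mathcal{I}_{\mathbf{n}}$; bilinearity then propagates this to an arbitrary $g\in\mathcal{S}_{\mathbf{p},\boldsymbol{\Xi}}$ written as in \eqref{6.8}. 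All the expectations involved are finite by the Cauchy--Schwarz inequality, because $y(\mathbf{X})\in L^2(\Omega,\mathcal{F},\mathbb{P})$ and each $\Psi_{\mathbf{j},\mathbf{p},\boldsymbol{\Xi}}(\mathbf{X})$ has finite second moment by Proposition~\ref{p7}.

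Next I would substitute the SCE expansion \eqref{6.1} and distribute the expectation over the finite index set $\mathcal{I}_{\mathbf{n}}$:
\[
\mathbb{E}\!\left[\left(y(\mathbf{X})-y_{\mathbf{p},\boldsymbol{\Xi}}(\mathbf{X})\right)\Psi_{\mathbf{j},\mathbf{p},\boldsymbol{\Xi}}(\mathbf{X})\right]
=\mathbb{E}\!\left[y(\mathbf{X})\Psi_{\mathbf{j},\mathbf{p},\boldsymbol{\Xi}}(\mathbf{X})\right]
-\sum_{\mathbf{i}\in\mathcal{I}_{\mathbf{n}}}C_{\mathbf{i},\mathbf{p},\boldsymbol{\Xi}}\,\mathbb{E}\!\left[\Psi_{\mathbf{i},\mathbf{p},\boldsymbol{\Xi}}(\mathbf{X})\Psi_{\mathbf{j},\mathbf{p},\boldsymbol{\Xi}}(\mathbf{X})\right].
\]
By the second-moment property \eqref{5.4} in Proposition~\ref{p7}, $\mathbb{E}[\Psi_{\mathbf{i},\mathbf{p},\boldsymbol{\Xi}}(\mathbf{X})\Psi_{\mathbf{j},\mathbf{p},\boldsymbol{\Xi}}(\mathbf{X})]$ equals $1$ when $\mathbf{i}=\mathbf{j}$ and $0$ otherwise, so the sum collapses to $C_{\mathbf{j},\mathbf{p},\boldsymbol{\Xi}}$. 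On the other hand, the defining formula \eqref{6.2} gives $\mathbb{E}[y(\mathbf{X})\Psi_{\mathbf{j},\mathbf{p},\boldsymbol{\Xi}}(\mathbf{X})]=C_{\mathbf{j},\mathbf{p},\boldsymbol{\Xi}}$. Hence the right-hand side is $C_{\mathbf{j},\mathbf{p},\boldsymbol{\Xi}}-C_{\mathbf{j},\mathbf{p},\boldsymbol{\Xi}}=0$, establishing orthogonality against each basis element.

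Finally I would conclude: for an arbitrary $g\in\mathcal{S}_{\mathbf{p},\boldsymbol{\Xi}}$ of the form $g(\mathbf{X})=\sum_{\mathbf{j}\in\mathcal{I}_{\mathbf{n}}}\bar{C}_{\mathbf{j},\mathbf{p},\boldsymbol{\Xi}}\Psi_{\mathbf{j},\mathbf{p},\boldsymbol{\Xi}}(\mathbf{X})$, bilinearity of the inner product together with the per-basis-element identity just proved yields $\big(y(\mathbf{X})-y_{\mathbf{p},\boldsymbol{\Xi}}(\mathbf{X}),\,g(\mathbf{X})\big)_{L^2(\Omega,\mathcal{F},\mathbb{P})}=\sum_{\mathbf{j}\in\mathcal{I}_{\mathbf{n}}}\bar{C}_{\mathbf{j},\mathbf{p},\boldsymbol{\Xi}}\cdot 0=0$, so the truncation error is orthogonal to all of $\mathcal{S}_{\mathbf{p},\boldsymbol{\Xi}}$. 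There is essentially no hard step: the only points requiring care are the (immediate) interchange of expectation with a finite sum and the observation that this statement is in effect a restatement of the normal equations $\partial e_{\text{SCE}}/\partial C_{\mathbf{i},\mathbf{p},\boldsymbol{\Xi}}=0$ already solved in the proof of Theorem~\ref{t1}; indeed, the result could alternatively be presented as a direct corollary of \eqref{6.7}.
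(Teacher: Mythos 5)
Your proposal is correct and follows essentially the same route as the paper's proof: both expand an arbitrary $g\in\mathcal{S}_{\mathbf{p},\boldsymbol{\Xi}}$ in the orthonormal basis of Proposition \ref{p8}, then combine the coefficient formula \eqref{6.2} with the orthonormality relation \eqref{5.4} to make the inner product collapse to $C_{\mathbf{j},\mathbf{p},\boldsymbol{\Xi}}-C_{\mathbf{j},\mathbf{p},\boldsymbol{\Xi}}=0$. The only cosmetic difference is that you first verify orthogonality against each basis element and then invoke bilinearity, whereas the paper carries the double sum through in one computation as in \eqref{6.10}; your closing remark that the result restates the normal equations from \eqref{6.7} is also consistent with the paper's presentation.
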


\begin{proof}
Let $g$ described in \eqref{6.8}, with arbitrary coefficients $\bar{C}_{\mathbf{i},\mathbf{p},\boldsymbol{\Xi}}$, $\mathbf{i} \in \mathcal{I}_{\mathbf{n}}$, be an arbitrary element of $\mathcal{S}_{\mathbf{p},\boldsymbol{\Xi}}$.  Then

\begin{equation}
\begin{array}{ll}
\displaystyle
  &
\mathbb{E}
\left[  \left\{ y(\mathbf{X}) - y_{\mathbf{p},\boldsymbol{\Xi}}(\mathbf{X}) \right\}
g(\mathbf{X}) \right] \\
= &
\mathbb{E}
\left[  \left\{ y(\mathbf{X}) -
\displaystyle
\sum_{\mathbf{j} \in \mathcal{I}_{\mathbf{n}}} C_{\mathbf{j},\mathbf{p},\boldsymbol{\Xi}} \Psi_{\mathbf{j},\mathbf{p},\boldsymbol{\Xi}}(\mathbf{X}) \right\}
\displaystyle
\sum_{\mathbf{i} \in \mathcal{I}_{\mathbf{n}}} \bar{C}_{\mathbf{i},\mathbf{p},\boldsymbol{\Xi}}
\Psi_{\mathbf{i},\mathbf{p},\boldsymbol{\Xi}}(\mathbf{X})
\right]
\\
= &
\displaystyle
\sum_{\mathbf{i} \in \mathcal{I}_{\mathbf{n}}} C_{\mathbf{i},\mathbf{p},\boldsymbol{\Xi}} \bar{C}_{\mathbf{i},\mathbf{p},\boldsymbol{\Xi}} -
\sum_{\mathbf{i} \in \mathcal{I}_{\mathbf{n}}} C_{\mathbf{i},\mathbf{p},\boldsymbol{\Xi}} \bar{C}_{\mathbf{i},\mathbf{p},\boldsymbol{\Xi}}
\\
= &
0,
\end{array}
\label{6.10}
\end{equation}
where the third line follows from \eqref{6.2} and Proposition \ref{p7}.  Hence, the proposition is proved.
\end{proof}

\begin{proposition}
The projection operator $P_{\mathcal{S}_{\mathbf{p},\boldsymbol{\Xi}}}:L^2(\mathbb{A}^N,\mathcal{B}^{N},f_{\mathbf{X}}d\mathbf{x}) \to \mathcal{S}_{\mathbf{p},\boldsymbol{\Xi}}$
is a linear, bounded operator.
\label{p10}
\end{proposition}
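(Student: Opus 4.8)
The plan is to verify the two asserted properties separately, each following at once from the orthonormality recorded in Proposition \ref{p7} and the coefficient formula \eqref{6.2}, together with the fact that the index set $\mathcal{I}_{\mathbf{n}}$ is finite.

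First, for linearity, I would take $y,z \in L^2(\mathbb{A}^N,\mathcal{B}^{N},f_{\mathbf{X}}d\mathbf{x})$ and scalars $\alpha,\beta \in \mathbb{R}$, and observe that for each fixed $\mathbf{i} \in \mathcal{I}_{\mathbf{n}}$ the coefficient map $y \mapsto C_{\mathbf{i},\mathbf{p},\boldsymbol{\Xi}}$ defined by \eqref{6.2} is linear, being the integral (expectation) of $y$ against the fixed function $\Psi_{\mathbf{i},\mathbf{p},\boldsymbol{\Xi}}$; hence $C_{\mathbf{i},\mathbf{p},\boldsymbol{\Xi}}[\alpha y+\beta z]=\alpha C_{\mathbf{i},\mathbf{p},\boldsymbol{\Xi}}[y]+\beta C_{\mathbf{i},\mathbf{p},\boldsymbol{\Xi}}[z]$. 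Substituting $\alpha y+\beta z$ into \eqref{6.5} and using linearity of each coefficient inside the finite sum then gives $P_{\mathcal{S}_{\mathbf{p},\boldsymbol{\Xi}}}(\alpha y+\beta z)=\alpha\, P_{\mathcal{S}_{\mathbf{p},\boldsymbol{\Xi}}} y+\beta\, P_{\mathcal{S}_{\mathbf{p},\boldsymbol{\Xi}}} z$, which is linearity.

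Second, for boundedness, I would use the orthogonality of the truncation error to the subspace, already established in Proposition \ref{p9}. Since $P_{\mathcal{S}_{\mathbf{p},\boldsymbol{\Xi}}}y \in \mathcal{S}_{\mathbf{p},\boldsymbol{\Xi}}$, Proposition \ref{p9} gives $\mathbb{E}[\{y(\mathbf{X})-P_{\mathcal{S}_{\mathbf{p},\boldsymbol{\Xi}}}y(\mathbf{X})\}\,P_{\mathcal{S}_{\mathbf{p},\boldsymbol{\Xi}}}y(\mathbf{X})]=0$, so the Pythagorean identity yields $\|y\|_{L^2}^2 = \|y-P_{\mathcal{S}_{\mathbf{p},\boldsymbol{\Xi}}}y\|_{L^2}^2 + \|P_{\mathcal{S}_{\mathbf{p},\boldsymbol{\Xi}}}y\|_{L^2}^2 \ge \|P_{\mathcal{S}_{\mathbf{p},\boldsymbol{\Xi}}}y\|_{L^2}^2$. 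Equivalently, by the orthonormality in Proposition \ref{p7}, $\|P_{\mathcal{S}_{\mathbf{p},\boldsymbol{\Xi}}}y\|_{L^2}^2 = \sum_{\mathbf{i}\in\mathcal{I}_{\mathbf{n}}} C_{\mathbf{i},\mathbf{p},\boldsymbol{\Xi}}^2 \le \|y\|_{L^2}^2$, which is Bessel's inequality for the finite orthonormal system. In either form, $\|P_{\mathcal{S}_{\mathbf{p},\boldsymbol{\Xi}}}y\|_{L^2} \le \|y\|_{L^2}$, so $P_{\mathcal{S}_{\mathbf{p},\boldsymbol{\Xi}}}$ is bounded with operator norm at most one (and exactly one whenever $\mathcal{S}_{\mathbf{p},\boldsymbol{\Xi}}\neq\{0\}$, since $P_{\mathcal{S}_{\mathbf{p},\boldsymbol{\Xi}}}$ restricts to the identity on its range).

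There is no genuine obstacle in this argument: because $|\mathcal{I}_{\mathbf{n}}|=\prod_{k=1}^N n_k < \infty$, every manipulation — interchanging the finite sum with the expectation, expanding the square, invoking Pythagoras — is elementary and free of convergence issues. The only substantive input is that $\{\Psi_{\mathbf{i},\mathbf{p},\boldsymbol{\Xi}}\}_{\mathbf{i}\in\mathcal{I}_{\mathbf{n}}}$ is an orthonormal set in $L^2(\mathbb{A}^N,\mathcal{B}^{N},f_{\mathbf{X}}d\mathbf{x})$, which is precisely Proposition \ref{p7}.
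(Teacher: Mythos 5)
Your proposal is correct and follows essentially the same route as the paper: the paper dismisses linearity as obvious and establishes boundedness by applying Proposition \ref{p9} with $g=P_{\mathcal{S}_{\mathbf{p},\boldsymbol{\Xi}}}y$ and invoking the Pythagorean identity, exactly as you do (your Bessel-inequality reformulation is just an equivalent restatement of the same estimate).
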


\begin{proof}
The operator $P_{\mathcal{S}_{\mathbf{p},\boldsymbol{\Xi}}}$ is obviously linear.  To prove its boundedness, use Proposition \ref{p9} and then invoke the Pythagoras theorem, yielding
\begin{equation}
\mathbb{E}[\{y(\mathbf{X})-y_{\mathbf{p},\boldsymbol{\Xi}}(\mathbf{X})\}^2] + \mathbb{E}[y_{\mathbf{p},\boldsymbol{\Xi}}^2(\mathbf{X})] =
\mathbb{E}[y^2(\mathbf{X})].
\label{6.11}
\end{equation}
Therefore,
\begin{equation}
\mathbb{E}[y_{\mathbf{p},\boldsymbol{\Xi}}^2(\mathbf{X})] \le
\mathbb{E}[y^2(\mathbf{X})]
\label{6.12}
\end{equation}
for any $y(\mathbf{X}) \in L^2(\Omega, \mathcal{F}, \mathbb{P}$.  This is equivalent to the assertion that
\begin{equation}
{\left\| P_{\mathcal{S}_{\mathbf{p},\boldsymbol{\Xi}}} y(\mathbf{x}) \right\|}_{L^2(\mathbb{A}^N,\mathcal{B}^{N},f_{\mathbf{X}}d\mathbf{x})} \le
{\left\| y(\mathbf{x}) \right\|}_{L^2(\mathbb{A}^N,\mathcal{B}^{N},f_{\mathbf{X}}d\mathbf{x})}
\label{6.13}
\end{equation}
for any $y(\mathbf{x}) \in L^2(\mathbb{A}^N,\mathcal{B}^{N},f_{\mathbf{X}}d\mathbf{x})$.
\end{proof}

From the general properties of orthogonal projection, the proofs of Theorem \ref{t1} and Propositions \ref{p9} and \ref{p10} are straightforward and may deem unnecessary to the eye of an expert reader.  Nonetheless, they are documented here for the paper to be self-contained.

\subsection{Approximation quality and convergence}
A preferred approach among approximation theorists to measure the quality of approximations by polynomials and splines involves the modulus of smoothness \cite{dahmen80,schumaker07,timan63}.  Formal definitions of the modulus of smoothness in each coordinate direction $k$, followed by a tensorized version, are presented as follows.

\begin{definition}[Schumaker \cite{schumaker07}]
Given a positive integer $\alpha_k \in \mathbb{N}$ and $0 < h_k \le (b_k-a_k)/\alpha_k$, the $\alpha_k$th modulus of smoothness of a function $y(x_k) \in L^2[a_k,b_k]$ in the $L^2$-norm is a function defined by
\begin{equation}
\omega_{\alpha_k}(y;h_k)_{L^2[a_k,b_k]} := \sup_{ 0 \le u_k \le h_k}
\left\| \Delta_{u_k}^{\alpha_k} y(x_k)\right\|_{L^2[a_k,b_k-\alpha_k u_k]},~~h_k > 0,
\label{6.14}
\end{equation}
where
\[
\Delta_{u_k}^{\alpha_k} y(x_k) := \sum_{i=0}^{\alpha_k} (-1)^{\alpha_k-i} \binom{\alpha_k}{i} y(x_k+iu_k)
\]
is the $\alpha_k$th forward difference of $y$ at $x_k$ for any $0 \le u_k \le h_k$.

Moreover, given a multi-index $\boldsymbol{\alpha}=(\alpha_1,\ldots,\alpha_N) \in \mathbb{N}^N$ and any vector $\mathbf{u} \ge \boldsymbol{0}$, let
\[
\Delta_{\mathbf{u}}^{\boldsymbol{\alpha}} = \prod_{k=1}^N \Delta_{u_k}^{\alpha_k}.
\]
Then the $\boldsymbol{\alpha}$-modulus of smoothness of a function $y(\mathbf{x}) \in L^2(\mathbb{A}^N)$ in the $L^2$-norm is the function defined by
\begin{equation}
\omega_{\boldsymbol{\alpha}}(y;\mathbf{h})_{L^2(\mathbb{A}^N)} :=
\sup_{ \boldsymbol{0} \le \mathbf{u} \le \mathbf{h}}
\left\| \Delta_{\mathbf{u}}^{\boldsymbol{\alpha}} y(\mathbf{x}) \right\|_{L^2(\mathbb{A}_{\boldsymbol{\alpha},\mathbf{u}}^N)},~~
\mathbf{h} > \boldsymbol{0},
\label{6.15}
\end{equation}
where
\[
\mathbb{A}_{\boldsymbol{\alpha},\mathbf{u}}^N = \left\{  \mathbf{x} \in \mathbb{A}^N:
\mathbf{x} + \boldsymbol{\alpha}\otimes\mathbf{u} \in \mathbb{A}^N \right\},~~
\boldsymbol{\alpha}\otimes\mathbf{u} = (\alpha_1 u_1,\ldots,\alpha_N u_N).
\]
\label{d10}
\end{definition}

The book by Schumaker \cite{schumaker07} provides a slightly general definition of the modulus of smoothness for $y \in L^q[a_k,b_k]$ (Chapter 2) or $y \in L^q(\mathbb{A}^N)$ (Chapter 13), $1 \le q < \infty$, including a summary of their elementary properties.

From Definition \ref{d10}, as $h_k$ approaches \emph{zero}, so does $0 \le u_k \le h_k$.  Taking the limit $u_k \to 0$ inside the integral of the $L^2$ norm, which is permissible for a finite interval and uniformly convergent integrand, the forward difference
\[
\displaystyle
\lim_{u_k \to 0}
\Delta_{u_k}^{\alpha_k} y(x_k) =
y(x_k) \sum_{i=0}^{\alpha_k} (-1)^{\alpha_k-i} \binom{\alpha_k}{i} = 0,
\]
as the sum vanishes for any $\alpha_k \in \mathbb{N}$.  Consequently, the coordinate modulus of smoothness
\[
\omega_{\alpha_k}(y;h_k)_{L^2[a_k,b_k]} \to 0 ~~\text{as}~ h_k \to 0~~\forall \alpha_k \in \mathbb{N}.
\]
Following similar considerations, the tensor modulus of smoothness
\[
\omega_{\boldsymbol{\alpha}}(y;\mathbf{h})_{L^2(\mathbb{A}^N)} \to 0 ~~\text{as}~\mathbf{h} \to \boldsymbol{0}~~\forall \boldsymbol{\alpha} \in \mathbb{N}^N.
\]
These limits, in conjunction with Lemma \ref{l1}, will be used to prove the $L^2$-convergence of the SCE approximations.

\begin{lemma}
Let $L^2(\mathbb{A}^N)$ be an unweighted Hilbert space, defined as
\begin{equation}
L^2\left( \mathbb{A}^N \right):=
\left\{ y:\mathbb{A}^N \to \mathbb{R}:
\int_{\mathbb{A}^N} |y(\mathbf{x})|^2 d \mathbf{x} < \infty \right\},
\label{6.16}
\end{equation}
with standard norm $\| \cdot \|_{L^2(\mathbb{A}^N)}$.  Then, for any function $y(\mathbf{x}) \in L^2(\mathbb{A}^N,\mathcal{B}^{N},f_{\mathbf{X}}d\mathbf{x})$, it holds that
\begin{equation}
\left\|  y(\mathbf{x}) \right\|_{L^2(\mathbb{A}^N,\mathcal{B}^{N},f_{\mathbf{X}}d\mathbf{x})}  \le
\sqrt{\left\|  f_{\mathbf{X}}(\mathbf{x})\right\|_{L^\infty(\mathbb{A}^N)}}
\left\|  y(\mathbf{x}) \right\|_{L^2(\mathbb{A}^N)},
\label{6.17}
\end{equation}
where $\| \cdot \|_{L^\infty(\mathbb{A}^N)}$ is the infinity norm.  Here, additionally, it is assumed that $f_{\mathbf{X}} \in L^\infty(\mathbb{A}^N)$.
\label{l1}
\end{lemma}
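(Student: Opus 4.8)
The plan is to bound the weighted $L^2$-norm directly by pulling the density out of the integral through its essential supremum. First I would write out the definition of the weighted norm,
\[
\left\|  y(\mathbf{x}) \right\|_{L^2(\mathbb{A}^N,\mathcal{B}^{N},f_{\mathbf{X}}d\mathbf{x})}^2 =
\int_{\mathbb{A}^N} |y(\mathbf{x})|^2 f_{\mathbf{X}}(\mathbf{x})\, d\mathbf{x}.
\]
Since $f_{\mathbf{X}} \in L^\infty(\mathbb{A}^N)$ by hypothesis, we have $f_{\mathbf{X}}(\mathbf{x}) \le \| f_{\mathbf{X}} \|_{L^\infty(\mathbb{A}^N)}$ for almost every $\mathbf{x} \in \mathbb{A}^N$, and because $|y(\mathbf{x})|^2 \ge 0$ the integrand satisfies $|y(\mathbf{x})|^2 f_{\mathbf{X}}(\mathbf{x}) \le \| f_{\mathbf{X}} \|_{L^\infty(\mathbb{A}^N)} |y(\mathbf{x})|^2$ a.e. Monotonicity of the Lebesgue integral then gives
\[
\int_{\mathbb{A}^N} |y(\mathbf{x})|^2 f_{\mathbf{X}}(\mathbf{x})\, d\mathbf{x} \le
\| f_{\mathbf{X}} \|_{L^\infty(\mathbb{A}^N)} \int_{\mathbb{A}^N} |y(\mathbf{x})|^2 \, d\mathbf{x} =
\| f_{\mathbf{X}} \|_{L^\infty(\mathbb{A}^N)} \, \| y(\mathbf{x}) \|_{L^2(\mathbb{A}^N)}^2.
\]
Taking square roots of both sides yields \eqref{6.17}. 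Along the way one should note that the right-hand side is finite, so $y \in L^2(\mathbb{A}^N)$ automatically — a point worth stating explicitly since the lemma implicitly asserts that membership in the weighted space (together with the boundedness of the density) forces membership in the unweighted space, which is exactly what makes the later convergence argument go through.

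There is essentially no obstacle here; the only care needed is to handle the ``almost everywhere'' qualifier correctly (the bound $f_{\mathbf{X}} \le \| f_{\mathbf{X}} \|_{L^\infty}$ holds only outside a null set, which does not affect the integral) and to make sure the argument does not secretly assume $y \in L^2(\mathbb{A}^N)$ at the outset — it does not, since the chain of inequalities shows finiteness of $\| y \|_{L^2(\mathbb{A}^N)}$ as a consequence. I would therefore keep the proof to the three displayed lines above plus a sentence noting finiteness.
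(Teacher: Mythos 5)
Your three displayed lines are correct and are essentially the paper's own proof: the paper writes the weighted norm as the pairing $\left( y^2, f_{\mathbf{X}} \right)_{L^2(\mathbb{A}^N)}$ and invokes H\"{o}lder's inequality with exponents $(1,\infty)$, which is exactly the bound you obtain by pulling $\left\| f_{\mathbf{X}} \right\|_{L^\infty(\mathbb{A}^N)}$ out of the integral pointwise a.e.\ and using monotonicity. The difference is purely cosmetic.

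One caveat: your closing remark has the implication backwards. The inequality bounds the \emph{weighted} norm by a constant times the \emph{unweighted} norm, so it cannot show that $y \in L^2(\mathbb{A}^N,\mathcal{B}^{N},f_{\mathbf{X}}d\mathbf{x})$ forces $y \in L^2(\mathbb{A}^N)$; if $\left\| y \right\|_{L^2(\mathbb{A}^N)} = +\infty$ the inequality is trivially true and tells you nothing (and indeed membership in the weighted space does not imply membership in the unweighted space when $f_{\mathbf{X}}$ vanishes on a set of positive measure). What the lemma actually delivers, and what Proposition \ref{p11} uses, is the converse direction: control of the unweighted error $\left\| y - y_{\mathbf{p},\boldsymbol{\Xi}} \right\|_{L^2(\mathbb{A}^N)}$ via \eqref{6.20} transfers to the weighted error. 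So drop the sentence claiming finiteness of $\left\| y \right\|_{L^2(\mathbb{A}^N)}$ as a consequence; the stated inequality and its proof stand without it.
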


\begin{proof}
From definition,
\begin{equation}
\begin{array}{rcl}
\left\|  y(\mathbf{x}) \right\|_{L^2(\mathbb{A}^N,\mathcal{B}^{N},f_{\mathbf{X}}d\mathbf{x})}^2
& :=   &
\int_{\mathbb{A}^N} y^2(\mathbf{x}) f_{\mathbf{X}}(\mathbf{x})d\mathbf{x}  \\
&  =   &
\left( y^2(\mathbf{x}),f_{\mathbf{X}}(\mathbf{x}) \right)_{L^2(\mathbb{A}^N)} \\
&  \le &
\left\|  y^2(\mathbf{x}) \right\|_{L^1(\mathbb{A}^N)} \cdot
\left\|  f_{\mathbf{X}}(\mathbf{x}) \right\|_{L^\infty(\mathbb{A}^N)} \\
&  =   &
\left\|  y(\mathbf{x}) \right\|_{L^2(\mathbb{A}^N)}^2 \cdot
\left\|  f_{\mathbf{X}}(\mathbf{x}) \right\|_{L^\infty(\mathbb{A}^N)}
\end{array}
\label{6.18}
\end{equation}
where the third line stems from H\"{o}lder's inequality.  As $\left\|  f_{\mathbf{X}}(\mathbf{x})\right\|_{L^\infty(\mathbb{A}^N)}$ is positive, applying the square-root on \eqref{6.18} yields the desired result.
\end{proof}

\begin{proposition}
For any $y(\mathbf{X}) \in L^2(\Omega, \mathcal{F}, \mathbb{P})$, a sequence of SCE approximations
$\{y_{\mathbf{p},\boldsymbol{\Xi}}(\mathbf{X})\}_{\mathbf{h} > \boldsymbol{0}}$, with $\mathbf{h}=(h_1,\ldots,h_N)$ representing the vector of largest element sizes, converges to $y(\mathbf{X})$ in mean-square, that is,
\[
\lim_{\mathbf{h} \to \boldsymbol{0}} \mathbb{E}\left[ \left|y(\mathbf{X})-y_{\mathbf{p},\boldsymbol{\Xi}}(\mathbf{X})\right|^2 \right] = 0.
\]
Furthermore, the sequence of SCE approximations converges in probability, that is, for any $\epsilon>0$,
\[
\lim_{\mathbf{h} \to \boldsymbol{0}} \mathbb{P}\left( \left|y(\mathbf{X})-y_{\mathbf{p},\boldsymbol{\Xi}}(\mathbf{X})\right| > \epsilon \right) = 0;
\]
and converges in distribution, that is, for all points $\xi \in \mathbb{R}$ where $F(\xi)$ is continuous,
\[
\lim_{\mathbf{h} \to \boldsymbol{0}} F_{\mathbf{p},\boldsymbol{\Xi}}(\xi) = F(\xi)
\]
such that $F_{\mathbf{p},\boldsymbol{\Xi}}(\xi):=\mathbb{P}(y_{\mathbf{p},\boldsymbol{\Xi}}(\mathbf{X}) \le \xi)$ and $F(\xi):=\mathbb{P}(y(\mathbf{X}) \le \xi)$ are distribution functions of $y_{\mathbf{p},\boldsymbol{\Xi}}(\mathbf{X})$ and $y(\mathbf{X})$, respectively. If $F(\xi)$ is continuous on $\mathbb{R}$, then the distribution functions converge uniformly.
\label{p11}
\end{proposition}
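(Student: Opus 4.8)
The plan is to prove the mean-square convergence first and then deduce the two other modes from it. By the best-approximation identity \eqref{6.4} of Theorem~\ref{t1}, for every $g\in\mathcal{S}_{\mathbf{p},\boldsymbol{\Xi}}$ one has $\|y(\mathbf{x})-y_{\mathbf{p},\boldsymbol{\Xi}}(\mathbf{x})\|_{L^2(\mathbb{A}^N,\mathcal{B}^N,f_{\mathbf{X}}d\mathbf{x})}\le\|y(\mathbf{x})-g(\mathbf{x})\|_{L^2(\mathbb{A}^N,\mathcal{B}^N,f_{\mathbf{X}}d\mathbf{x})}$, so it suffices to exhibit, for each $\epsilon>0$, a spline $g$ making the right-hand side smaller than $\epsilon$ once $\mathbf{h}$ is small enough. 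Since a general $y\in L^2(\mathbb{A}^N,\mathcal{B}^N,f_{\mathbf{X}}d\mathbf{x})$ need not lie in the unweighted space $L^2(\mathbb{A}^N)$ (when $f_{\mathbf{X}}$ degenerates on $\partial\mathbb{A}^N$), I would first replace $y$ by a bounded --- hence, $\mathbb{A}^N$ being compact, also $L^2(\mathbb{A}^N)$ --- function $\tilde{y}$ with $\|y-\tilde{y}\|_{L^2(\mathbb{A}^N,\mathcal{B}^N,f_{\mathbf{X}}d\mathbf{x})}<\epsilon/2$; the truncation $\tilde{y}=\max(-M,\min(M,y))$ with $M$ large works by dominated convergence.

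The core step is a quantitative spline estimate. Because $\mathcal{S}_{\mathbf{p},\boldsymbol{\Xi}}=\bigotimes_{k=1}^{N}\mathcal{S}_{k,p_k,\boldsymbol{\xi}_k}$ is a tensor product of univariate spline spaces of degrees $p_k$ on $(p_k+1)$-open knot sequences, I would invoke the Jackson-type estimate for $L^2$ spline approximation (Schumaker~\cite{schumaker07}, Chapters~6 and~13): there exist a constant $C=C(\mathbf{p})$, independent of the family of knot sequences, and a spline $g_{\mathbf{h}}\in\mathcal{S}_{\mathbf{p},\boldsymbol{\Xi}}$ (e.g., the image of $\tilde{y}$ under a bounded tensor-product quasi-interpolant) with $\|\tilde{y}-g_{\mathbf{h}}\|_{L^2(\mathbb{A}^N)}\le C\,\omega_{\mathbf{p}+\boldsymbol{1}}(\tilde{y};\mathbf{h})_{L^2(\mathbb{A}^N)}$ (equivalently, bounded by a constant times a sum of the coordinate moduli $\omega_{p_k+1}$). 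Now $f_{\mathbf{X}}=\prod_{k}f_{X_k}$ is continuous on the compact set $\mathbb{A}^N$ by Assumption~\ref{a1}, hence $f_{\mathbf{X}}\in L^\infty(\mathbb{A}^N)$, so Lemma~\ref{l1} applies and yields $\|\tilde{y}-g_{\mathbf{h}}\|_{L^2(\mathbb{A}^N,\mathcal{B}^N,f_{\mathbf{X}}d\mathbf{x})}\le\sqrt{\|f_{\mathbf{X}}\|_{L^\infty(\mathbb{A}^N)}}\,C\,\omega_{\mathbf{p}+\boldsymbol{1}}(\tilde{y};\mathbf{h})_{L^2(\mathbb{A}^N)}$. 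Since the tensor modulus of smoothness tends to $0$ as $\mathbf{h}\to\boldsymbol{0}$ (as recorded just before Lemma~\ref{l1}), choosing $\mathbf{h}$ small makes this last quantity $<\epsilon/2$; the triangle inequality together with Theorem~\ref{t1} then gives $\mathbb{E}[|y(\mathbf{X})-y_{\mathbf{p},\boldsymbol{\Xi}}(\mathbf{X})|^2]=\|y-y_{\mathbf{p},\boldsymbol{\Xi}}\|^2_{L^2(\mathbb{A}^N,\mathcal{B}^N,f_{\mathbf{X}}d\mathbf{x})}<\epsilon^2$, and the first assertion follows since $\epsilon$ is arbitrary.

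The remaining claims are routine consequences. Convergence in probability follows from Chebyshev's inequality, $\mathbb{P}(|y(\mathbf{X})-y_{\mathbf{p},\boldsymbol{\Xi}}(\mathbf{X})|>\epsilon)\le\epsilon^{-2}\mathbb{E}[|y(\mathbf{X})-y_{\mathbf{p},\boldsymbol{\Xi}}(\mathbf{X})|^2]\to0$. Convergence in distribution is then the standard implication that convergence in probability implies convergence in law, so $F_{\mathbf{p},\boldsymbol{\Xi}}(\xi)\to F(\xi)$ at every continuity point $\xi$ of $F$. Finally, when $F$ is continuous on all of $\mathbb{R}$, I would upgrade this to uniform convergence by the P\'olya argument: choose finitely many points $\xi_1<\cdots<\xi_{m-1}$ with $F(\xi_1)<\epsilon$, $F(\xi_{m-1})>1-\epsilon$, and consecutive $F$-increments below $\epsilon$ (possible by continuity and monotonicity of $F$); make $|F_{\mathbf{p},\boldsymbol{\Xi}}(\xi_j)-F(\xi_j)|<\epsilon$ at those finitely many points by taking $\mathbf{h}$ small; and then bound $|F_{\mathbf{p},\boldsymbol{\Xi}}(\xi)-F(\xi)|$ for arbitrary $\xi$ by sandwiching between neighbouring grid values, using monotonicity of both distribution functions and the tail estimates.

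I expect the main obstacle to be the core step --- justifying a Jackson-type estimate for the tensor-product spline spaces $\mathcal{S}_{\mathbf{p},\boldsymbol{\Xi}}$ with a constant that does not blow up under mesh refinement and is uniform over all knot families sharing a prescribed vector $\mathbf{h}$ of largest element sizes, given that the knot sequences are arbitrary $(p_k+1)$-open sequences, possibly with repeated interior knots. For $L^2$ (rather than $L^\infty$) approximation this is supplied by Schumaker's quasi-interpolation estimates without any quasi-uniformity hypothesis on the mesh, so the argument closes; the preliminary truncation is a minor but necessary technicality, because the modulus of smoothness in Definition~\ref{d10} is defined in the unweighted $L^2$-norm.
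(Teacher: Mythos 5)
Your proof is correct and rests on the same two pillars as the paper's: the norm comparison of Lemma \ref{l1} and Schumaker's modulus-of-smoothness estimate for $L^2$ spline approximation, followed by the observation that $\omega_{\mathbf{p}+\boldsymbol{1}}(y;\mathbf{h})_{L^2(\mathbb{A}^N)}\to 0$ as $\mathbf{h}\to\boldsymbol{0}$ and the standard chain mean-square $\Rightarrow$ probability $\Rightarrow$ distribution (the paper merely asserts the last two implications, while you supply Chebyshev and P\'olya). The mechanics differ in one respect worth noting. The paper applies Schumaker's Theorem 12.8 (a Lebesgue-type inequality) directly to the SCE projector to obtain \eqref{6.20}, citing the boundedness of $P_{\mathcal{S}_{\mathbf{p},\boldsymbol{\Xi}}}$ from Proposition \ref{p10}; but Proposition \ref{p10} establishes boundedness only in the weighted norm $L^2(\mathbb{A}^N,\mathcal{B}^{N},f_{\mathbf{X}}d\mathbf{x})$, whereas \eqref{6.20} is an unweighted estimate, and a general $y$ in the weighted space need not even lie in $L^2(\mathbb{A}^N)$ if $f_{\mathbf{X}}$ vanishes somewhere on $\partial\mathbb{A}^N$. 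You sidestep both issues by invoking the best-approximation property \eqref{6.4} --- so only \emph{some} good spline (a quasi-interpolant) is needed, not a bound on the projector in the unweighted norm --- and by inserting the preliminary truncation $\tilde{y}=\max(-M,\min(M,y))$. These are refinements rather than a new idea, but they patch genuine soft spots in the paper's argument; the price is a slightly longer proof and the bookkeeping of the extra $\epsilon/2$ from the truncation.
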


\begin{proof}
From Lemma \ref{l1},
\begin{equation}
\left\|  y(\mathbf{x}) - y_{\mathbf{p},\boldsymbol{\Xi}}(\mathbf{x})
\right\|_{L^2(\mathbb{A}^N,\mathcal{B}^{N},f_{\mathbf{X}}d\mathbf{x})}  \le
\sqrt{\left\|  f_{\mathbf{X}}(\mathbf{x})\right\|_{L^\infty}}
\left\|  y(\mathbf{x}) - y_{\mathbf{p},\boldsymbol{\Xi}}(\mathbf{x}) \right\|_{L^2(\mathbb{A}^N)}.
\label{6.19}
\end{equation}
Recognize from Proposition \ref{p10} that $P_{\mathcal{S}_{\mathbf{p},\boldsymbol{\Xi}}}$ is a linear, bounded operator.  Therefore, invoke Theorem 12.8 of Schumaker's book \cite{schumaker07}, which states that for a bounded linear operator, the unweighted $L^2$-error from the SCE approximation is bounded by
\begin{equation}
\left\| y(\mathbf{x}) - y_{\mathbf{p},\boldsymbol{\Xi}}(\mathbf{x}) \right\|_{L^2(\mathbb{A}^N)}
\le C' \omega_{\mathbf{p}+\boldsymbol{1}}(y;\mathbf{h})_{L^2(\mathbb{A}^N)},
\label{6.20}
\end{equation}
where $C'$ is a constant that depends only on $\mathbf{p}$ and $N$, and
$\mathbf{p}+\boldsymbol{1} = (p_1+1,\ldots,p_N+1)$.  Combining \eqref{6.19} and \eqref{6.20} produces
\begin{equation}
\left\|  y(\mathbf{x}) - y_{\mathbf{p},\boldsymbol{\Xi}}(\mathbf{x})
\right\|_{L^2(\mathbb{A}^N,\mathcal{B}^{N},f_{\mathbf{X}}d\mathbf{x})}  \le
C \omega_{\mathbf{p}+\boldsymbol{1}}(y;\mathbf{h})_{L^2(\mathbb{A}^N)},
\label{6.21}
\end{equation}
where $C = C'\sqrt{\left\|  f_{\mathbf{X}}(\mathbf{x})\right\|_{L^\infty}}$ is another constant, depending on $\mathbf{p}$, $N$, and now $f_{\mathbf{X}}(\mathbf{x})$.

Equation \eqref{6.21} gives a result on the $L^2$-distance of a function $y$ to the spline space $\mathcal{S}_{\mathbf{p},\boldsymbol{\Xi}}$.  From the discussion related to Definition \ref{d10}, the modulus of smoothness
\[
\omega_{\mathbf{p}+\boldsymbol{1}}(y;\mathbf{h})_{L^2(\mathbb{A}^N)} \to 0 ~~\text{as}~\mathbf{h} \to \boldsymbol{0}~~\forall \mathbf{p} \in \mathbb{N}_0^N.
\]
Therefore,
\begin{equation}
\displaystyle
\lim_{\mathbf{h} \to \boldsymbol{0}}
\left\|  y(\mathbf{x}) - y_{\mathbf{p},\boldsymbol{\Xi}}(\mathbf{x})
\right\|_{L^2(\mathbb{A}^N,\mathcal{B}^{N},f_{\mathbf{X}}d\mathbf{x})}  = 0,
\label{6.22}
\end{equation}
thus proving the mean-square convergence of $y_{\mathbf{p},\boldsymbol{\Xi}}(\mathbf{X})$ to $y(\mathbf{X})$ for any degree $\mathbf{p} \in \mathbb{N}_0^N$.  In addition, as the SCE approximation converges in mean-square, it does so in probability.  Moreover, as the expansion converges in probability, it also converges in distribution.
\end{proof}

\subsection{A special case of SCE}
The well-known PCE approximation, especially its tensor-product version, can be derived from the SCE approximation proposed.

\begin{proposition}
Given $k=1,\ldots,N$, $0 \le p_k < \infty$, and an interval $[a_k,b_k] \subset \mathbb{R}$, let
\begin{equation}
\boldsymbol{\xi}_k^{'}=\{\overset{p_k+1~\mathrm{times}}{\overbrace{a_k,\ldots,a_k}},
\overset{p_k+1~\mathrm{times}}{\overbrace{b_k,\ldots,b_k}}\}
\label{6.23}
\end{equation}
be a $(p_k+1)$-open knot sequence with no internal knots and   $\boldsymbol{\Xi}^{'}=\{\boldsymbol{\xi}_1^{'},\ldots,\boldsymbol{\xi}_N^{'}\}$.  Then the resulting SCE approximation reduces to a PCE approximation.

\label{p12}
\end{proposition}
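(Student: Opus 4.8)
The plan is to show that, for the degenerate knot sequence $\boldsymbol{\xi}_k'$ of \eqref{6.23}, the univariate spline space collapses onto the polynomial space $\Pi_{p_k}$; consequently the tensor-product spline space $\mathcal{S}_{\mathbf{p},\boldsymbol{\Xi}'}$ coincides with the tensor-product polynomial space underpinning PCE, and the result then follows because both the SCE and the (tensor-product) PCE approximations are the $L^2(\mathbb{A}^N,\mathcal{B}^{N},f_{\mathbf{X}}d\mathbf{x})$-orthogonal projection of $y$ onto that common space, which is unique.

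First I would verify that $\mathcal{S}_{k,p_k,\boldsymbol{\xi}_k'}=\Pi_{p_k}$. The sequence $\boldsymbol{\xi}_k'$ has only the two distinct knots $\zeta_{k,1}=a_k$ and $\zeta_{k,2}=b_k$, each of multiplicity $p_k+1$, so $r_k=2$ and there are no interior knots. In Definition~\ref{d5} the defining conditions then reduce to a single polynomial piece $g_{k,1}\in\Pi_{p_k}$ on $[\xi_{k,1},\xi_{k,2})=[a_k,b_k)$ subject to an empty list of interknot smoothness constraints; extending to the closed interval yields $\mathcal{S}_{k,p_k,\boldsymbol{\xi}_k'}=\Pi_{p_k}$. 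As a check, Proposition~\ref{p1} gives $\dim\mathcal{S}_{k,p_k,\boldsymbol{\xi}_k'}=0+p_k+1=p_k+1=\dim\Pi_{p_k}$, so $n_k=p_k+1$. (Here the B-splines $B_{i_k,p_k,\boldsymbol{\xi}_k'}^k$ are in fact the Bernstein polynomials of degree $p_k$ on $[a_k,b_k]$, but this explicit identification is not needed.) Then Definition~\ref{d8} gives $\mathcal{S}_{\mathbf{p},\boldsymbol{\Xi}'}=\bigotimes_{k=1}^N\Pi_{p_k}$, the space of $N$-variate polynomials of coordinate degree at most $p_k$ in $x_k$---precisely the span of the basis used by tensor-product PCE.

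Next I would observe that the SCE construction applies verbatim over $\boldsymbol{\xi}_k'$: by Propositions~\ref{p5} and~\ref{p8} the whitened B-splines $\{\psi_{i_k,p_k,\boldsymbol{\xi}_k'}^k\}_{i_k=1,\ldots,n_k}$ form a measure-consistent orthonormal basis of $\Pi_{p_k}$ (with $\psi_{1,p_k,\boldsymbol{\xi}_k'}^k\equiv 1$ by Proposition~\ref{p6}), and hence their tensor products $\Psi_{\mathbf{i},\mathbf{p},\boldsymbol{\Xi}'}$ form a measure-consistent orthonormal basis of $\bigotimes_{k}\Pi_{p_k}$. By Theorem~\ref{t1}, the SCE approximation $y_{\mathbf{p},\boldsymbol{\Xi}'}(\mathbf{X})$ is the best $L^2(\mathbb{A}^N,\mathcal{B}^{N},f_{\mathbf{X}}d\mathbf{x})$-approximation of $y(\mathbf{X})$ out of $\mathcal{S}_{\mathbf{p},\boldsymbol{\Xi}'}=\bigotimes_{k}\Pi_{p_k}$, i.e., the $L^2(f_{\mathbf{X}}d\mathbf{x})$-orthogonal projection of $y$ onto this polynomial space.

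Finally, the degree-$\mathbf{p}$ tensor-product PCE approximation is, by its defining best-approximation (equivalently, Fourier-coefficient) property relative to the measure-consistent orthonormal polynomials, also the $L^2(\mathbb{A}^N,\mathcal{B}^{N},f_{\mathbf{X}}d\mathbf{x})$-orthogonal projection of $y$ onto $\bigotimes_{k}\Pi_{p_k}$. Since orthogonal projection onto a fixed finite-dimensional subspace of a Hilbert space is unique, the SCE and PCE approximations coincide, which is the claim. The only steps requiring genuine care are the first---confirming that a $(p_k+1)$-open knot sequence with no interior knots produces exactly $\Pi_{p_k}$---and the explicit identification of tensor-product PCE as a projection onto that same space, so that uniqueness of the projection can be invoked; the remainder is bookkeeping through the propositions already established.
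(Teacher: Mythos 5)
Your proposal is correct and follows essentially the same route as the paper: establish $\mathcal{S}_{k,p_k,\boldsymbol{\xi}_k'}=\Pi_{p_k}$, tensorize, and conclude that the SCE and tensor-product PCE approximations coincide as projections onto the same space. The only cosmetic difference is that you derive the identity $\mathcal{S}_{k,p_k,\boldsymbol{\xi}_k'}=\Pi_{p_k}$ directly from Definition~\ref{d5} (with a dimension check via Proposition~\ref{p1}) rather than via the Bernstein-polynomial basis the paper invokes, and you make explicit the uniqueness-of-projection step that the paper dismisses as trivial.
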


For the knot sequence $\boldsymbol{\xi}_k^{'}$ in \eqref{6.23}, the resulting B-splines are related to the well-known Bernstein polynomials of degree $p_k$.  Since the set of Bernstein polynomials of degree $p_k$ forms a basis of the polynomial space $\Pi_{p_k}$, the spline space $\mathcal{S}_{k,p_k,\boldsymbol{\xi}_k^{'}}=\Pi_{p_k}$.  Then, going through the standard tensor-product construction, it is trivial to show that, indeed, the multivariate spline space $\mathcal{S}_{\mathbf{p},\boldsymbol{\Xi}^{'}}$ is spanned by the set $\{ \Psi_{\mathbf{i}}(\mathbf{x}): \boldsymbol{0} \le \mathbf{i} \le \mathbf{p}\}$ of multivariate orthonormal polynomials in $\mathbf{x}$ that are consistent with the probability measure $f_{\mathbf{X}}(\mathbf{x}) d\mathbf{x}$. This results in a $\mathbf{p}$th-degree tensor-product PCE
\begin{equation}
y_{\mathbf{p},\boldsymbol{\Xi}^{'}}(\mathbf{X}) =
y_{\mathbf{p}}(\mathbf{X}) :=
\sum_{\boldsymbol{0} \le \mathbf{i} \le \mathbf{p}} {C}_{\mathbf{i}}^{'}
\Psi_{\mathbf{i}}(\mathbf{X}),~~
\label{6.24}
\end{equation}
of $y(\mathbf{X}) \in L^2(\Omega,\mathcal{F},\mathbb{P})$, where $\mathbf{p} \in \mathbb{N}_0^N$ and
\[
{C}_{\mathbf{i}}^{'} :=
\int_{\mathbb{A}^N} y(\mathbf{x})
\Psi_{\mathbf{i}}(\mathbf{x}) f_{\mathbf{X}}(\mathbf{x})d\mathbf{x},~~
\mathbf{i} \in \mathbb{N}_0^N,
\]
are its expansion coefficients.  Hence, a $\mathbf{p}$th-degree SCE approximation with no internal knots becomes identical to a $\mathbf{p}$th-degree PCE approximation.

As described in the preceding paragraph, there is no mesh in the deduction of the PCE approximation from the SCE approximation.  Therefore, a refinement by reducing the element sizes is not possible.
However, a refinement is still possible by degree elevation, that is, by increasing gradually the degree $p_k$ in all coordinate directions. Indeed, when $p_k \to \infty$ for all $k=1,\ldots,N$, the right side of \eqref{6.24} becomes the full PCE representation of $y(\mathbf{X})$.

\subsection{Output statistics and other probabilistic characteristics}
The SCE approximation $y_{\mathbf{p},\boldsymbol{\Xi}}(\mathbf{X})$ can be viewed as a surrogate of $y(\mathbf{X})$.  Therefore, relevant probabilistic characteristics of $y(\mathbf{X})$, including its first two moments and probability density function, if it exists, can be estimated from the statistical properties of $y_{\mathbf{p},\boldsymbol{\Xi}}(\mathbf{X})$.

Applying the expectation operator on $y_{\mathbf{p},\boldsymbol{\Xi}}(\mathbf{X})$ in \eqref{6.1} and imposing Proposition \ref{p7}, its mean is
\begin{equation}
\mathbb{E}\left[ y_{\mathbf{p},\boldsymbol{\Xi}}(\mathbf{X}) \right] = C_{\boldsymbol{1},\mathbf{p},\boldsymbol{\Xi}} =
\mathbb{E}\left[ y(\mathbf{X})\right],~\boldsymbol{1}=(1,\ldots,1),
\label{6.25}
\end{equation}
which is independent of $\mathbf{p}$ and $\boldsymbol{\Xi}$. More importantly, the SCE approximation always yields the exact mean.

Applying the expectation operator again, this time on $[y_{\mathbf{p},\boldsymbol{\Xi}}(\mathbf{X})-C_{\boldsymbol{1},\mathbf{p},\boldsymbol{\Xi}}]^2$, and employing Proposition \ref{p7} one more time results in the variance
\begin{equation}
\operatorname{var}\left[ y_{\mathbf{p},\boldsymbol{\Xi}}(\mathbf{X}) \right] =
\sum_{\mathbf{i} \in (\mathcal{I}_{\mathbf{n}}\setminus{\{\boldsymbol{1}\}})} C_{\mathbf{i},\mathbf{p},\boldsymbol{\Xi}}^2 =
\sum_{\mathbf{i} \in \mathcal{I}_{\mathbf{n}}} C_{\mathbf{i},\mathbf{p},\boldsymbol{\Xi}}^2 -  C_{\boldsymbol{1},\mathbf{p},\boldsymbol{\Xi}}^2
\label{6.26}
\end{equation}
of $y_{\mathbf{p},\boldsymbol{\Xi}}(\mathbf{X})$. It is elementary to show that $\operatorname{var}[ y_{\mathbf{p},\boldsymbol{\Xi}}(\mathbf{X})] \le \operatorname{var}[ y(\mathbf{X})]$.

The second-moment properties of an SCE approximation are solely determined by the expansion coefficients.  The formulae for the mean and variance of the SCE approximation are same as those reported for the PCE approximation, although the respective expansion coefficients involved are not.  The primary reason for this similarity stems from the use of orthonormal basis in both expansions.

Being convergent in probability and in distribution, the probability density function of $y(\mathbf{X})$, if it exists, can also be estimated by that of $y_{\mathbf{p},\boldsymbol{\Xi}}(\mathbf{X})$.  However, deriving analytical formula for the density function is hopeless in general.  Nonetheless, the density function can be estimated by Monte Carlo simulation of the SCE approximation, that is, by re-sampling of $y_{\mathbf{p},\boldsymbol{\Xi}}(\mathbf{X})$ involving inexpensive evaluations of simple spline functions.

\subsection{SCE as an infinite series}
The set of orthonormal B-splines $\{\Psi_{\mathbf{i},\mathbf{p},\boldsymbol{\Xi}}(\mathbf{x}): \mathbf{i} \in \mathcal{I}_{\mathbf{n}}\}$ from \eqref{6.1} has its size equal to $\prod_{k=1}^N n_k$.  Therefore, the size is controlled by the number of basis functions $n_k$, which, in succession, is decided by the length of the knot sequence $\boldsymbol{\xi}_k$ and order $p_k$ in each coordinate direction. Obviously, the longer the sequence $\boldsymbol{\xi}_k$, the larger the value of $n_k$ and, hence, the size of the set. For a refinement process with a fixed $p_k$, consider increasing the length of $\boldsymbol{\xi}_k$ or $n_k$ in all $N$ coordinate directions in such a way that the largest element size $h_k$ is monotonically reduced. The result is an increasing family of the sets of such basis functions. In the limit, when $n_k \to \infty$ or $h_k \to 0$, $k=1,\ldots,N$, denote by $\boldsymbol{\xi}_{k,\infty}$ and $\boldsymbol{\Xi}_\infty=\{\boldsymbol{\xi}_{1,\infty},\ldots,\boldsymbol{\xi}_{N,\infty}\}$ the
associated knot sequence in the $k$th coordinate direction and the family of such $N$ knot sequences, respectively.  Then there exists a set of infinite number of basis functions  $\{\Psi_{\mathbf{i},\mathbf{p},\boldsymbol{\Xi}_\infty}(\mathbf{x}): \mathbf{i} \in \mathbb{N}^N\}$ with the index set of knot indices
\[
\left\{ \mathbf{i}=(i_1,\ldots,i_N): 1 \le i_k < \infty,~ k = 1,\ldots,N \right\}
= \mathbb{N}^N.
\]
In consequence, $\{\Psi_{\mathbf{i},\mathbf{p},\boldsymbol{\Xi}_\infty}(\mathbf{x}): \mathbf{i} \in \mathbb{N}^N\}$ forms an orthogonal basis of $\mathcal{S}_{\mathbf{p},\boldsymbol{\Xi}_\infty}$, yielding
\[
L^2(\Omega,\mathcal{F},\mathbb{P}) =
\overline{
\text{span}\{\Psi_{\mathbf{i},\mathbf{p},\boldsymbol{\Xi}_\infty}(\mathbf{x})\}_{\mathbf{i} \in \mathbb{N}^N}
},
\]
where the overline stands for set closure.  Hence, every $y(\mathbf{X}) \in L^2(\Omega,\mathcal{F},\mathbb{P})$ can be expanded in terms of the aforementioned spanning set, resulting in an infinite series
\begin{equation}
y(\mathbf{X}) \sim
\sum_{\mathbf{i} \in \mathbb{N}^N} C_{\mathbf{i},\mathbf{p},\boldsymbol{\Xi}_\infty}
\Psi_{\mathbf{i},\mathbf{p},\boldsymbol{\Xi}_\infty}(\mathbf{X}),
\label{6.27}
\end{equation}
which is referred to as SCE in the paper. Here the symbol $\sim$ represents equality in the mean-square sense.  From Proposition \ref{p11}, the partial sums of \eqref{6.27} converge to $y(\mathbf{X})$ in $L^2$. Therefore, the infinite series is mean-square convergent to the correct limit.

Comparing \eqref{6.1} and \eqref{6.27}, the former is a truncated version of the latter.  Therefore, the designation ``SCE" employed in the paper makes sense even though the SCE approximation in \eqref{6.1} represents a finite sum or expansion.

\section{Numerical experiments}
Three examples describing one-, two-, and four-dimensional UQ problems, where the output function is explicitly defined or obtained from the solution of an ordinary differential equation (ODE), are presented. The random input $\mathbf{X}$ fulfills Assumption \ref{a1}, and the output function $y(\mathbf{X})$ is in $L^2(\Omega,\mathcal{F},\mathbb{P})$. Therefore, SCE and PCE approximations can be applied to estimate their second-moment statistics and probability distributions.  The objectives are to evaluate the approximation power of the SCE approximation in terms of the second-moment statistics or probability distributions of $y(\mathbf{X})$ and contrast the SCE results with those obtained from the existing PCE approximation.

The coordinate degrees for SCE or PCE approximations in the second and third examples are identical, that is, $p_1=p_2=p_3=p_4=p$ (say).  So are the knot sequences for SCE, that is, $\boldsymbol{\xi}_1=\boldsymbol{\xi}_2=\boldsymbol{\xi}_3=\boldsymbol{\xi}_4=\boldsymbol{\xi}$ (say) with a uniform mesh of element sizes $h_1=h_2=h_3=h_4=h$.  In all three examples, the degree $p$ and/or element size $h$ were varied as desired.  The basis for a $p$th-degree PCE approximation was obtained from an appropriate set of Legendre orthonormal polynomials in input variables, whereas the basis for an SCE approximation, given a degree $p$ and a knot sequence of element size $h$, was generated from the Cholesky factorization of the spline moment matrix.  From the uniform distribution, the spline moment matrix was constructed analytically.  All knot sequences are ($p+1$)-open and consist of uniformly spaced distinct knots with even and/or odd numbers of elements, depending on the example.  The PCE and SCE coefficients, which are one-, two-, and four-dimensional integrals, were calculated exactly.

Define, for the first two examples, two approximation errors in the variances,
\begin{equation}
e_{p,h} :=
\displaystyle
\frac{\left|\operatorname{var}[y(\mathbf{X})] - \operatorname{var}[y_{p,h}(\mathbf{X})]\right|}
{\operatorname{var}[y(\mathbf{X})]}
~~\text{and}~~
e_{p} :=
\displaystyle
\frac{\left|\operatorname{var}[y(\mathbf{X})] - \operatorname{var}[y_{p}(\mathbf{X})]\right|}
{\operatorname{var}[y(\mathbf{X})]},
\label{7.1}
\end{equation}
committed by the SCE approximation $y_{p,h}(\mathbf{X}):=y_{p,\boldsymbol{\xi}}(X)$ or $y_{(p,p),\{\boldsymbol{\xi},\boldsymbol{\xi}\}}(X_1,X_2)$
and the PCE approximation $y_{p}(\mathbf{X}):=y_{p}(X)$ or $y_{(p,p)}(X_1,X_2)$, respectively, of $y(\mathbf{X})$.  The exact variance $\operatorname{var}[y(\mathbf{X})]$ was obtained analytically, whereas the SCE variance $\operatorname{var}[y_{p,h}(\mathbf{X})]$ and PCE variance $\operatorname{var}[y_{p}(\mathbf{X})]$ were also determined analytically from \eqref{6.26} and similar formula, respectively. Therefore, all approximation errors were calculated exactly.

\subsection{Example 1: three univariate functions}
Consider a family of three functions of a real-valued, uniformly distributed random variable $X$ over $[-1,1]$:
\begin{equation}
y(X) =
\begin{cases}
\sin(3 \pi X), &       \text{(smooth, oscillatory)}, \\
\exp(-3 |X|),  &       \text{(nonsmooth)}, \\
\Phi(20 X),    &       \text{(nearly discontinuous)}.
\end{cases}
\label{7.2}
\end{equation}
Here, $\Phi(u)=(1/\sqrt{2\pi})\int_{-\infty}^u \exp(-\xi^2/2) d \xi$ is the cumulative probability distribution function of a Gaussian random variable with \emph{zero} mean and \emph{unit} variance.  From top to bottom, \eqref{7.2} comprises oscillatory yet smooth, non-differentiable, and nearly discontinuous functions that are progressively more difficult to approximate by polynomials.

\begin{figure}[b!]
\begin{centering}
\includegraphics[scale=0.53]{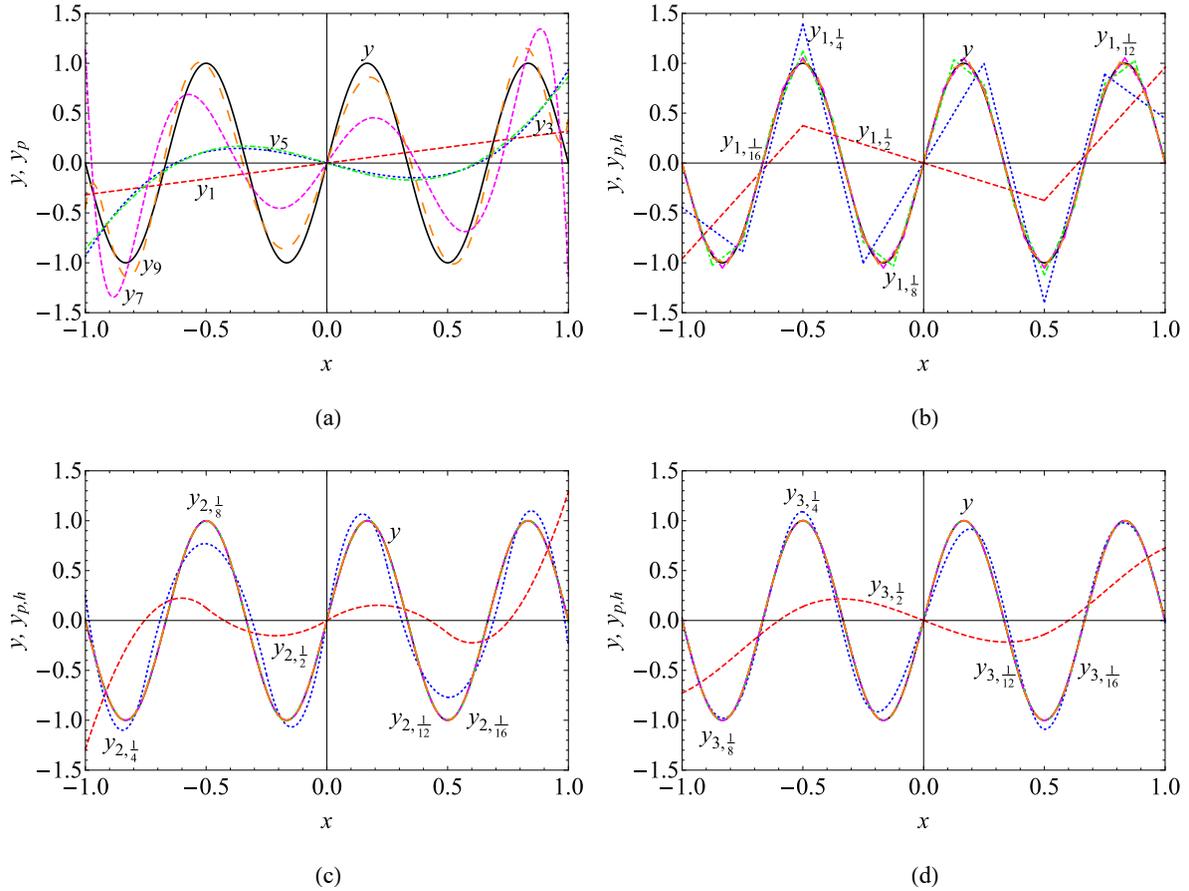}
\par\end{centering}
\caption{Oscillatory function: $y(x)=\sin(3 \pi x)$;
(a) PCE approximations for $p=1,3,5,7,9$;
(b) linear SCE approximations for $h=1/2, 1/4, 1/8, 1/12, 1/16$;
(c) quadratic SCE approximations for $h=1/2, 1/4, 1/8, 1/12, 1/16$;
(d) cubic SCE approximations for $h=1/2, 1/4, 1/8, 1/12, 1/16$.}
\label{fig2}
\end{figure}

The knot sequences for the oscillatory function include simple knots and consist of even numbers of elements with varying element sizes: $h=1/2, 1/4, 1/8, 1/12, 1/16$. For the nonsmooth and nearly discontinuous functions, however, the knot sequences comprise both even and odd numbers of elements, producing the following element sizes: $h=2/5, 2/9, 2/17, 2/25, 2/33$ for odd numbers of elements; and $h=1/2, 1/4, 1/8, 1/12, 1/16$ for even numbers of elements.  The odd numbers of elements are relevant when the location of the point where the function is non-differentiable or nearly discontinuous is unknown.  However, if the aforementioned point is known, then it is possible to employ even numbers of elements by deploying knot(s) at that point as well.  In the latter case, double knots (multiplicity of two for $p=2$) were placed for the non-differentiable function, whereas a single knot was assigned for the nearly discontinuous function.

\begin{figure}[tb!]
\begin{centering}
\includegraphics[scale=0.53]{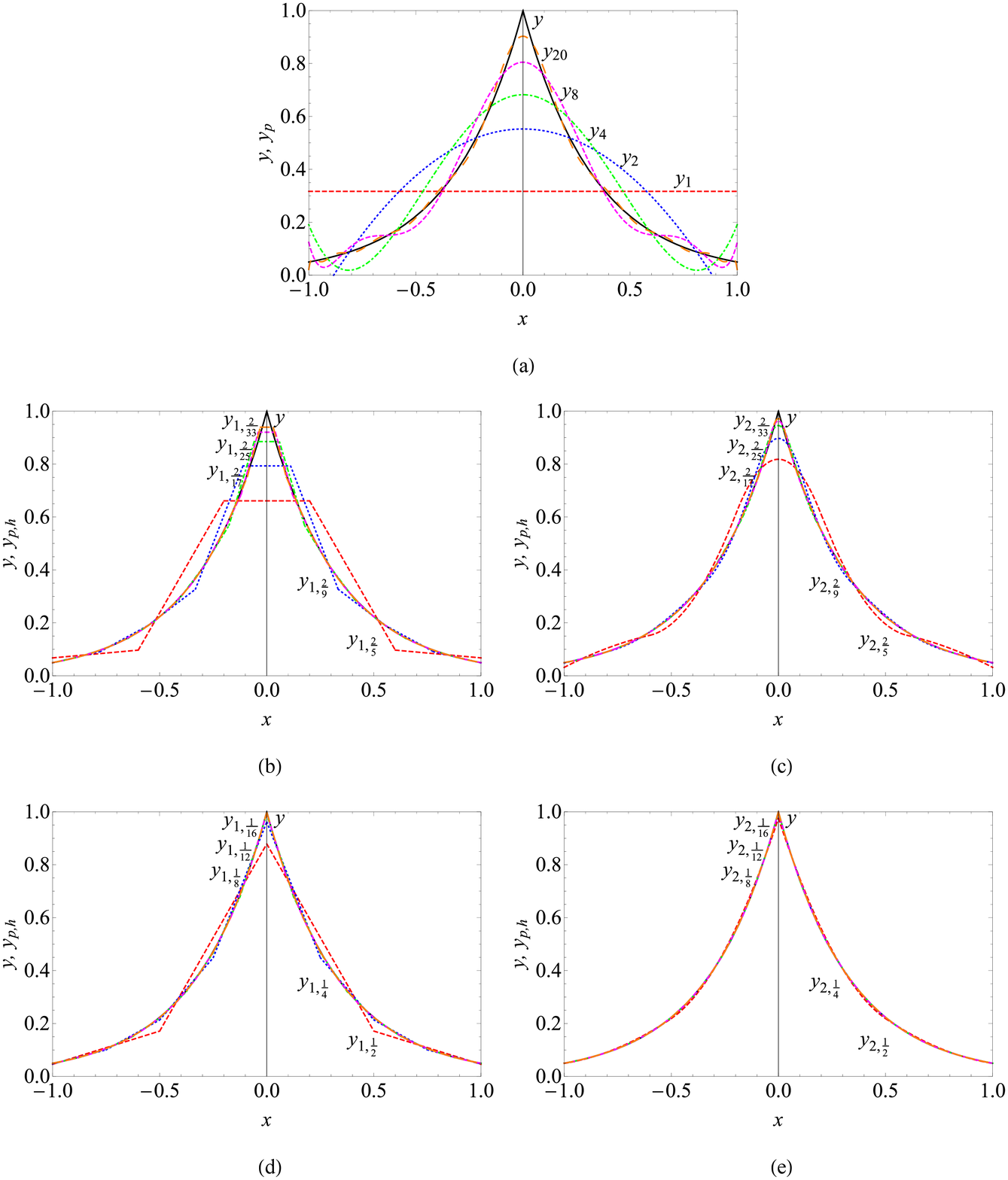}
\par\end{centering}
\caption{Nonsmooth function: $y(x)=\exp(-3 |x|)$;
(a) PCE approximations for $p=1,2,4,8,20$;
(b) linear SCE approximations for $h=2/5, 2/9, 2/17, 2/25, 2/33$ (odd);
(c) quadratic SCE approximations for $h=2/5, 2/9, 2/17, 2/25, 2/33$ (odd);
(d) linear SCE approximations for $h=1/2, 1/4, 1/8, 1/12, 1/16$ (even);
(e) quadratic SCE approximations for $h=1/2, 1/4, 1/8, 1/12, 1/16$ (even).}
\label{fig3}
\end{figure}

Figures \ref{fig2}, \ref{fig3}, and \ref{fig4} depict the comparisons of PCE and SCE approximations for the oscillatory, nonsmooth, and nearly discontinuous functions, respectively.  For the oscillatory function, the PCE approximation improves with $p$ as shown in Figure \ref{fig2}(a), but at the cost of the 9th-degree approximation to be fairly acceptable.  Such requirement becomes stringent for the nonsmooth [Figure \ref{fig3}(a)] or nearly discontinuous [Figure \ref{fig4}(a)] functions, where 20th- or 21st-degree PCE approximations are warranted.  In contrast, the SCE approximations for the oscillatory function, exhibited in Figure \ref{fig2}(b), look satisfactory, if not great, even for a linear spline ($p=1$), as long as the mesh is adequately fine ($h \le 1/8$). For $p=2$ or 3 and $h \le 1/8$, any distinction between an SCE approximation and actual function in Figure \ref{fig2}(c) or Figure \ref{fig2}(d) is indiscernible to the naked eye.

\begin{figure}[tb!]
\begin{centering}
\includegraphics[scale=0.53]{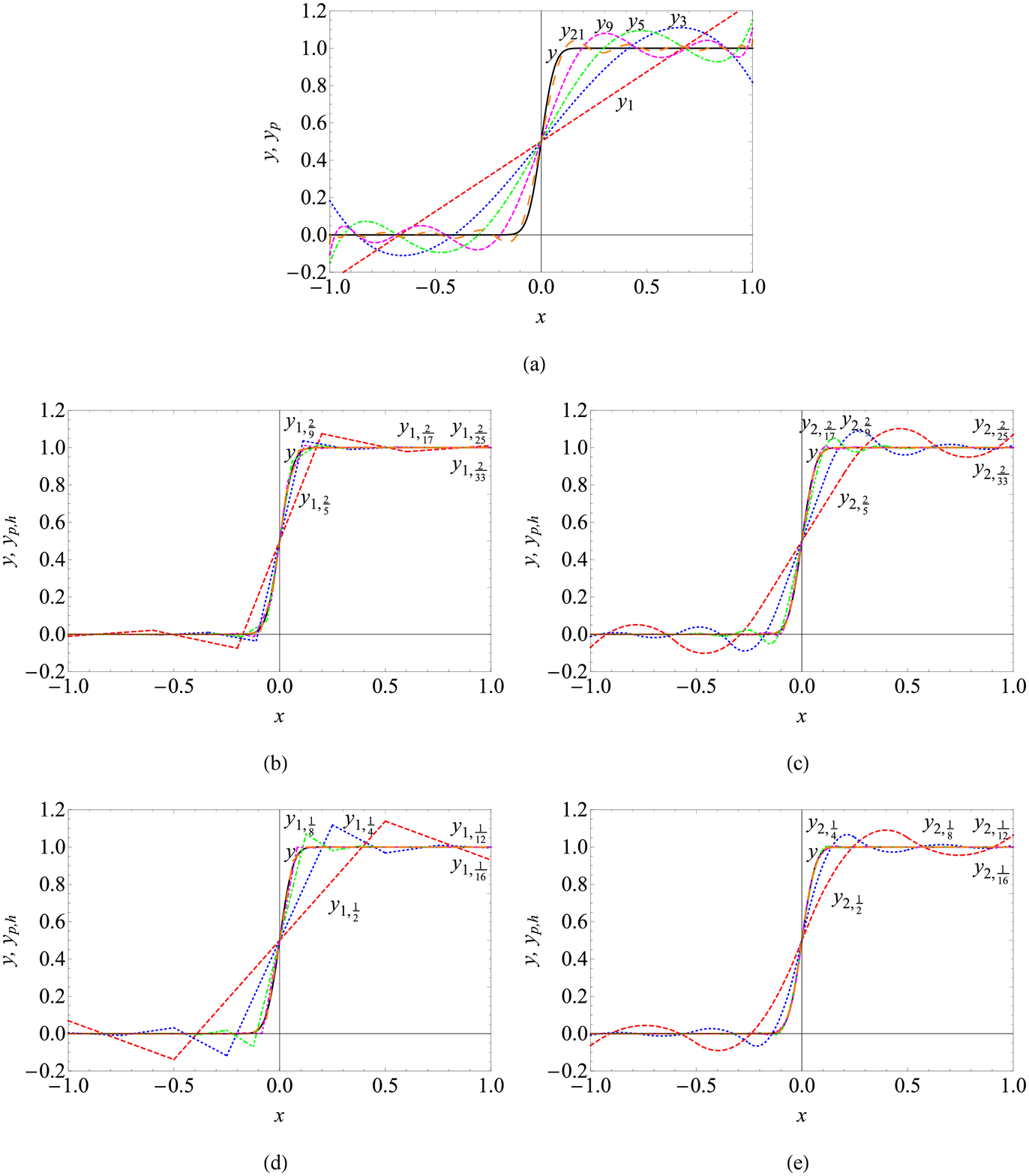}
\par\end{centering}
\caption{Nearly discontinuous function: $y(x)=\Phi(20 x)$;
(a) PCE approximations for $p=1,3,5,9,21$;
(b) linear SCE approximations for $h=2/5, 2/9, 2/17, 2/25, 2/33$ (odd);
(c) quadratic SCE approximations for $h=2/5, 2/9, 2/17, 2/25, 2/33$ (odd);
(d) linear SCE approximations for $h=1/2, 1/4, 1/8, 1/12, 1/16$ (even);
(e) quadratic SCE approximations for $h=1/2, 1/4, 1/8, 1/12, 1/16$ (even).}
\label{fig4}
\end{figure}

\begin{table}
\caption{Relative errors in the variances of three univariate functions by PCE and SCE approximations.}
\begin{centering}
\begin{tabular}{c}
\hline
{\scriptsize{}(a) smooth, oscillatory function: $y(X)=\sin(3\pi X)$}\tabularnewline
{\scriptsize{}}%
\begin{tabular}{ccccccc}
\hline
 &  &  &  & \multicolumn{3}{c}{{\scriptsize{}$e_{p,h}$}}\tabularnewline
\cline{5-7}
{\scriptsize{}$p$} & {\scriptsize{}$e_{p}$} &  & {\scriptsize{}$h$} & {\scriptsize{}$p=1$} & {\scriptsize{}$p=2$} & {\scriptsize{}$p=3$}\tabularnewline
\cline{1-2} \cline{4-7}
{\scriptsize{}1} & {\scriptsize{}0.932453} &  & {\scriptsize{}$1/2$} & {\scriptsize{}0.719505} & {\scriptsize{}0.755454} & {\scriptsize{}0.815119}\tabularnewline
{\scriptsize{}3} & {\scriptsize{}0.823578} &  & {\scriptsize{}$1/4$} & {\scriptsize{}0.0936391} & {\scriptsize{}0.0404491} & {\scriptsize{}0.0107727}\tabularnewline
{\scriptsize{}5} & {\scriptsize{}0.822617} &  & {\scriptsize{}$1/8$} & {\scriptsize{}$3.56392\times10^{-3}$} & {\scriptsize{}$1.5349\times10^{-4}$} & {\scriptsize{}$8.20356\times10^{-6}$}\tabularnewline
{\scriptsize{}7} & {\scriptsize{}0.292768} &  & {\scriptsize{}$1/12$} & {\scriptsize{}$6.05231\times10^{-4}$} & {\scriptsize{}$9.8752\times10^{-6}$} & {\scriptsize{}$1.95148\times10^{-7}$}\tabularnewline
{\scriptsize{}9} & {\scriptsize{}0.0279709} &  & {\scriptsize{}$1/16$} & {\scriptsize{}$1.80817\times10^{-4}$} & {\scriptsize{}$1.56899\times10^{-6}$} & {\scriptsize{}$1.6\times10^{-8}$}\tabularnewline
\hline
\end{tabular}\tabularnewline
\noalign{\vskip0.4cm}
{\scriptsize{}(b) nonsmooth function: $y(X)=\exp(-3|X|)$}\tabularnewline
{\scriptsize{}}%
\begin{tabular}{cccccccccc}
\hline
 &  &  &  & \multicolumn{2}{c}{{\scriptsize{}$e_{p,h}$ (odd no. of elements)}} &  &  & \multicolumn{2}{c}{{\scriptsize{}$e_{p,h}$ (even no. of elements)}}\tabularnewline
\cline{5-6} \cline{9-10}
{\scriptsize{}$p$} & {\scriptsize{}$e_{p}$} &  & {\scriptsize{}$h$} & {\scriptsize{}$p=1$} & {\scriptsize{}$p=2$} &  & {\scriptsize{}$h$} & {\scriptsize{}$p=1$} & {\scriptsize{}$p=2$}\tabularnewline
\cline{1-2} \cline{4-6} \cline{8-10}
{\scriptsize{}1} & {\scriptsize{}1} &  & {\scriptsize{}$2/5$} & {\scriptsize{}0.122349} & {\scriptsize{}0.0212933} &  & {\scriptsize{}$1/2$} & {\scriptsize{}0.0167023} & {\scriptsize{}$4.96606\times10^{-4}$}\tabularnewline
{\scriptsize{}2} & {\scriptsize{}0.325922} &  & {\scriptsize{}$2/9$} & {\scriptsize{}0.026662} & {\scriptsize{}$3.74539\times10^{-3}$} &  & {\scriptsize{}$1/4$} & {\scriptsize{}$1.13075\times10^{-3}$} & {\scriptsize{}$9.37131\times10^{-6}$}\tabularnewline
{\scriptsize{}4} & {\scriptsize{}0.124885} &  & {\scriptsize{}$2/17$} & {\scriptsize{}$4.42555\times10^{-3}$} & {\scriptsize{}$5.48633\times10^{-4}$} &  & {\scriptsize{}$1/8$} & {\scriptsize{}$7.00943\times10^{-5}$} & {\scriptsize{}$1.76989\times10^{-7}$}\tabularnewline
{\scriptsize{}8} & {\scriptsize{}0.0301413} &  & {\scriptsize{}$2/25$} & {\scriptsize{}$1.43968\times10^{-3}$} & {\scriptsize{}$1.71708\times10^{-4}$} &  & {\scriptsize{}$1/12$} & {\scriptsize{}$1.377\times10^{-5}$} & {\scriptsize{}$1.68832\times10^{-8}$}\tabularnewline
{\scriptsize{}20} & {\scriptsize{}0.0037569} &  & {\scriptsize{}$2/33$} & {\scriptsize{}$6.36083\times10^{-4}$} & {\scriptsize{}$7.45087\times10^{-5}$} &  & {\scriptsize{}$1/16$} & {\scriptsize{}$4.34601\times10^{-6}$} & {\scriptsize{}$3.14213\times10^{-9}$}\tabularnewline
\hline
\end{tabular}\tabularnewline
\noalign{\vskip0.4cm}
{\scriptsize{}(c) nearly discontinuous function: $y(X)=\Phi(20 X)$}\tabularnewline
{\scriptsize{}}%
\begin{tabular}{cccccccccc}
\hline
 &  &  &  & \multicolumn{2}{c}{{\scriptsize{}$e_{p,h}$ (odd no. of elements)}} &  &  & \multicolumn{2}{c}{{\scriptsize{}$e_{p,h}$ (even no. of elements)}}\tabularnewline
\cline{5-6} \cline{9-10}
{\scriptsize{}$p$} & {\scriptsize{}$e_{p}$} &  & {\scriptsize{}$h$} & {\scriptsize{}$p=1$} & {\scriptsize{}$p=2$} &  & {\scriptsize{}$h$} & {\scriptsize{}$p=1$} & {\scriptsize{}$p=2$}\tabularnewline
\cline{1-2} \cline{4-6} \cline{8-10}
{\scriptsize{}1} & {\scriptsize{}0.209125} &  & {\scriptsize{}$2/5$} & {\scriptsize{}0.0198118} & {\scriptsize{}0.0574063} &  & {\scriptsize{}$1/2$} & {\scriptsize{}0.0983968} & {\scriptsize{}0.0308548}\tabularnewline
{\scriptsize{}3} & {\scriptsize{}0.0966401} &  & {\scriptsize{}$2/9$} & {\scriptsize{}$2.59428\times10^{-3}$} & {\scriptsize{}0.0184093} &  & {\scriptsize{}$1/4$} & {\scriptsize{}0.0299556} & {\scriptsize{}$5.54174\times10^{-3}$}\tabularnewline
{\scriptsize{}5} & {\scriptsize{}0.0543964} &  & {\scriptsize{}$2/17$} & {\scriptsize{}$2.19365\times10^{-4}$} & {\scriptsize{}$2.05094\times10^{-3}$} &  & {\scriptsize{}$1/8$} & {\scriptsize{}$3.89483\times10^{-3}$} & {\scriptsize{}$5.25409\times10^{-5}$}\tabularnewline
{\scriptsize{}9} & {\scriptsize{}0.0212929} &  & {\scriptsize{}$2/25$} & {\scriptsize{}$1.82967\times10^{-4}$} & {\scriptsize{}$1.81128\times10^{-4}$} &  & {\scriptsize{}$1/12$} & {\scriptsize{}$4.98215\times10^{-4}$} & {\scriptsize{}$2.10786\times10^{-5}$}\tabularnewline
{\scriptsize{}21} & {\scriptsize{}0.0017763} &  & {\scriptsize{}$2/33$} & {\scriptsize{}$7.0312\times10^{-5}$} & {\scriptsize{}$1.50297\times10^{-5}$} &  & {\scriptsize{}$1/16$} & {\scriptsize{}$9.23004\times10^{-5}$} & {\scriptsize{}$1.036\times10^{-5}$}\tabularnewline
\hline
\end{tabular}\tabularnewline
\end{tabular}
\par\end{centering}{\footnotesize \par}
\label{table1}
\end{table}

For the nonsmooth and nearly discontinuous functions, there are two sets of linear ($p=1$) and quadratic ($p=2$) SCE approximations, obtained separately for odd and even numbers of elements; they are displayed in Figures \ref{fig3} and \ref{fig4}.  According to Figures \ref{fig3}(b) and \ref{fig3}(d), the approximation quality of linear SCE approximations for the nonsmooth function is visibly better when there are even numbers of elements, as expected.  The same observation holds for quadratic SCE approximations, where even a much coarser mesh produces excellent approximation for even numbers of elements.  The SCE results for the nearly discontinuous function are qualitatively the same.  However, there are still some oscillations in SCE approximations when the mesh is too coarse, pointing to the Gibb's type phenomenon commonly observed in polynomial-based approximations. Zhang and Martin \cite{zhang97} reported such behavior for a cubic spline approximation of the Heaviside function and found that the oscillation near discontinuity never goes away for a uniform knot sequence.  Clearly, a better, if not optimal, selection of knot sequences is required.

Finally, Table \ref{table1} presents the errors $e_{p,h}$ and $e_{p}$ in the variances of all three functions, obtained using SCE and PCE approximations, respectively, for various chosen degrees and knot sequences.  Clearly, the SCE approximation commits much lower errors than does the PCE approximation for the same degree $p$.  To attain an accurate approximation using splines, one is not interested in large values of $p$. Instead, the motivation is to keep $p$ fixed to a low value, but increase (decrease) the number of knots (element size).  Indeed, Table 1 demonstrates that a low-degree SCE approximation with an adequate mesh is capable of producing significantly more accurate estimates of the variance than the PCE approximation even when its degree of expansion is excessively large.  All approximations errors reported in Table 1 are consistent with the plots displayed in Figures \ref{fig2} through \ref{fig4}.

\subsection{Example 2: solution of a stochastic ODE}
The second example involves a stochastic boundary-value problem, described by the ODE
\begin{equation}
\displaystyle
-\frac{d}{d\xi} \left( \exp(|X_1|) \frac{d}{d\xi}y(\xi;X_1,X_2)\right)=\exp(|X_2|),~0\le \xi \le 1,
~y(\xi;X_1,X_2) \in \mathbb{R},
\label{7.3}
\end{equation}
with boundary conditions
\[
y(0;X_1,X_2)=0,~
\exp(|X_1|) \frac{dy}{d\xi}(1;X_1,X_2)=1.
\]
Here, $X_1$ and $X_2$ are two real-valued, independent, and identically distributed random variables, each following a uniform distribution over $[-1,1]$.  Originally studied by the author \cite{rahman18b}, the ODE is slightly modified here by introducing the absolute-value function, thus producing a nonsmooth solution.

A direct integration of \eqref{7.3} yields the exact solution:
\begin{equation}
y(\xi;X_1,X_2) =
\frac{1}{\exp(|X_1|)}\left[\xi +\left(\xi - \frac{\xi^2}{2} \right)\exp(|X_2|) \right].
\label{7.4}
\end{equation}
Therefore, the first two raw moments of $y(\xi;X_1,X_2)$ can be obtained easily.  For instance, at $\xi=1$, the two moments of $y(1;X_1,X_2)$, denoted briefly as $y(X_1,X_2)$, are
\[
\mathbb{E}[y(X_1,X_2)] = \displaystyle
\frac{1}{e}
\left[1 + \displaystyle \frac{1}{2}(e-1) \right](e-1) \approx 1.1752,
\]
\[
\mathbb{E}[y^2(X_1,X_2)] = \displaystyle
\frac{1}{16 e^2}
(e^2 + 8 e - 1)(e^2-1) \approx 1.52048.
\]
The exact solutions were used to benchmark the approximate results from SCE and PCE approximations.

\begin{figure}[htbp]
\begin{centering}
\includegraphics[scale=0.8]{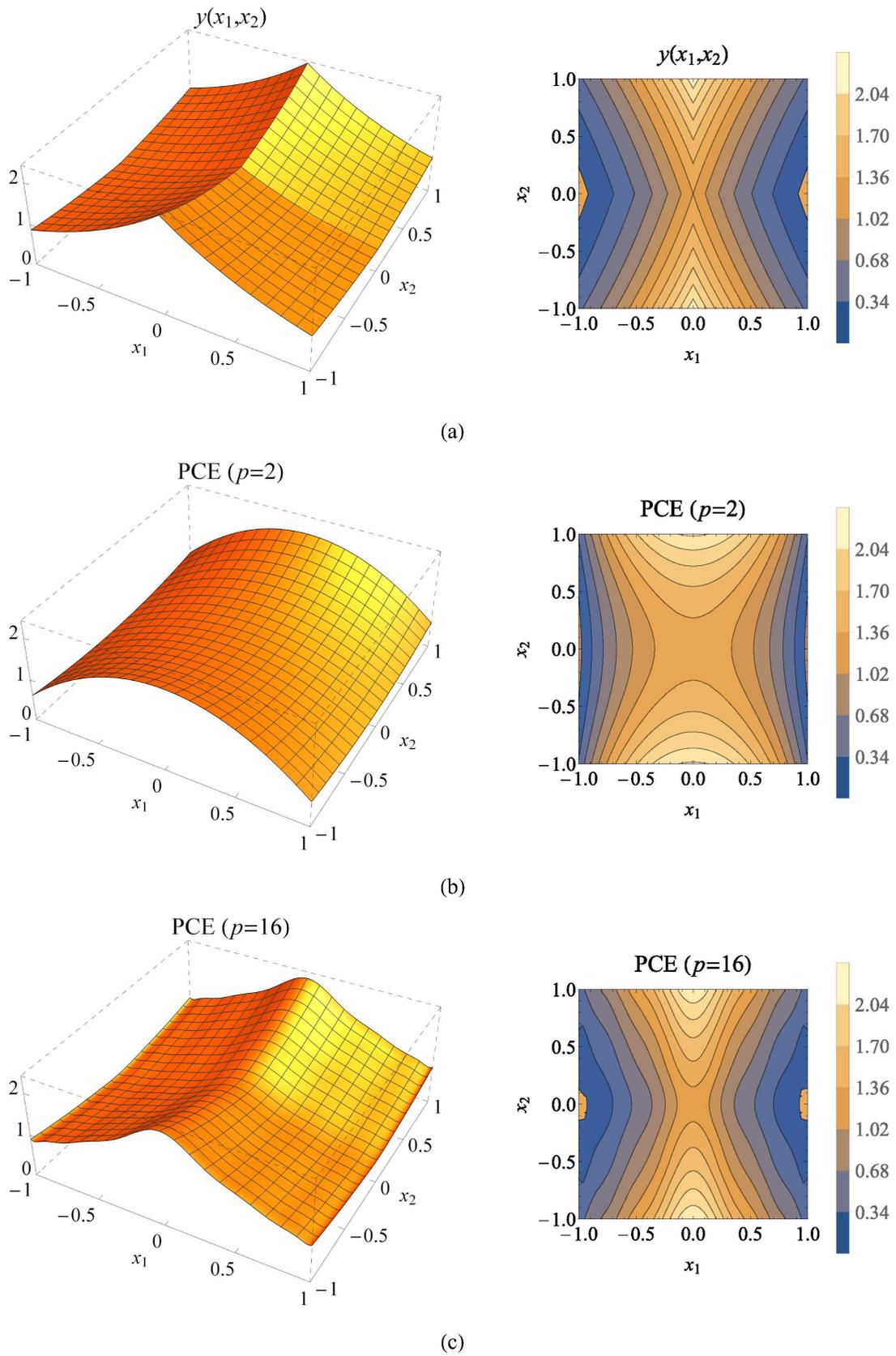}
\par\end{centering}
\caption{Three-dimensional and contour plots of the exact and two PCE solutions of ODE;
(a) exact solution $y(x_1,x_2)$;
(b) second-order PCE approximation;
(c) 16th-order PCE approximation.}
\label{fig5}
\end{figure}

\begin{figure}[htbp]
\begin{centering}
\includegraphics[scale=0.8]{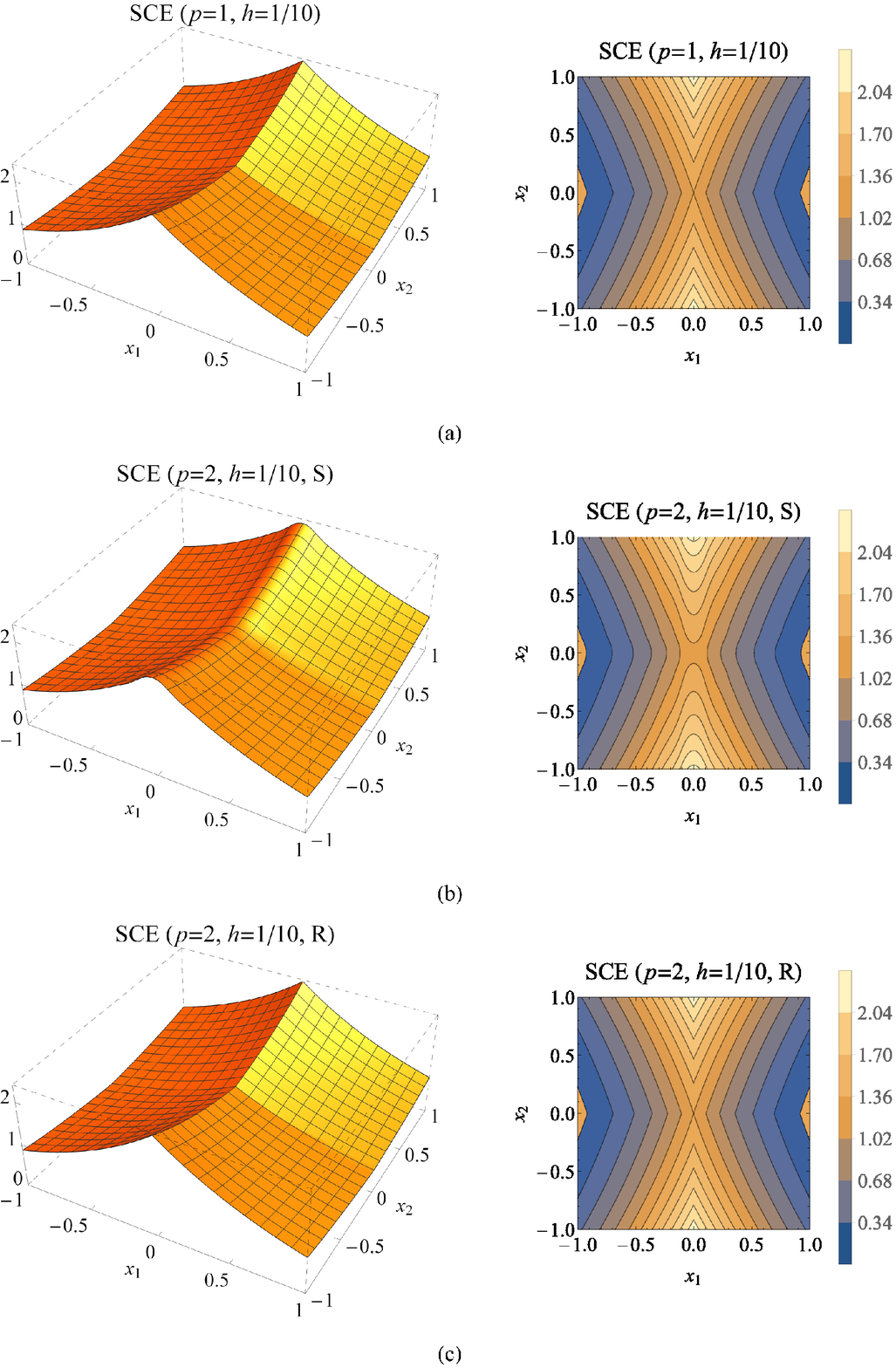}
\par\end{centering}
\caption{Three-dimensional and contour plots of three SCE solutions of ODE;
(a) linear SCE approximation for $h=1/10$;
(b) quadratic SCE approximation for $h=1/10$ and simple (``S") knots;
(c) quadratic SCE approximation for $h=1/10$ and a repeated (``R") central knot.}
\label{fig6}
\end{figure}

Figures \ref{fig5} and \ref{fig6} display three-dimensional (left) and contour (right) plots of the exact function $y(x_1,x_2)$ and several approximations from PCE and SCE. Because of the absolute-value function, the exact solution is saddle-shaped with slope discontinuities at the center, as shown in Figure \ref{fig5}(a).  The second-order PCE approximation exhibited in Figure 5(b) commits a variance error of $e_2=0.116812$ and is clearly inadequate. The 16th-order PCE approximation in Figure \ref{fig5}(c) shows some improvement by reducing the error to $e_{16}=2.26714\times 10^{-3}$, but not to an extent expected from such an impractically high expansion order.

In contrast, the linear ($p=1$) SCE approximation in Figure \ref{fig6}(a), obtained for an even number of elements with an element size of $h=1/10$, matches the exact function extremely well, producing a variance error of $e_{1,1/10}=1.86149\times 10^{-6}$.  The quadratic ($p=2$) SCE approximation in Figure \ref{fig6}(b), generated using the same mesh, yields an error of $e_{2,1/10}=3.54972\times 10^{-4}$, and is better than the 16th-order PCE approximation yet inferior to that in Figure \ref{fig6}(a).  This apparent anomaly of a linear SCE approximation producing a better result than a quadratic SCE approximation can be explained by examining the knot sequences used.  Due to even numbers of elements, there exists a central knot in each coordinate direction for both cases of $p=1$ and $p=2$.  However, for $p=2$, the first-order derivatives are continuous across the central knot in both directions.  This is why the quadratic SCE approximation is smoother than the linear SCE approximation or the exact function.  However, as $y(x_1,x_2)$ is not differentiable at the central knot, the linear approximation performs better than the quadratic approximation. However, if the central knot is repeated (multiplicity of two) in the knot sequences, the quadratic SCE approximation, shown in Figure \ref{fig6}(c), is even better than the linear SCE approximation, resulting in an error of $e_{2,1/10}=4.0056\times 10^{-10}$. Having said so, such manipulations of the knot sequences are not possible in general if the locations of slope discontinuities are not known \emph{a priori}.  In this case, the quadratic SCE approximation in Figure \ref{fig6}(b) is perhaps more realistic and the result of the linear SCE approximation should be deemed fortuitous for this specific problem.

\subsection{Example 3: a nonsmooth function of four variables}
In the final example, consider a nonsmooth function
\begin{equation}
y(\mathbf{X}) =
\displaystyle
\prod_{i=1}^4
\dfrac{|4 X_i - 2|^{b_i} + a_i}{1 + a_i},~a_i,b_i \in \mathbb{R},~i=1,4,
\label{7.5}
\end{equation}
of four independent random variables $X_i$, $i=1,2,3,4$, each of which is uniformly distributed over $[0,1]$.  The function parameters are as follows: $a_1=0$, $a_2=1$, $a_3=2$, $a_4=4$; $b_1=b_2=b_3=b_4=3/5$.   Clearly, $y$ is a non-differentiable function where the exponent $b_i$ controls its nonlinearity. Compared with $b_i=1$, the smaller the value of the exponent, the more nonlinear the function becomes in the $i$th coordinate direction.  This type of function, especially with \emph{unit} exponents, has been used for global sensitivity analysis \cite{saltelli95}.

Figures \ref{fig7}(a) and  \ref{fig7}(b) depict the probability distribution functions of $y(\mathbf{X})$ calculated by three methods: (1) crude MCS; (2) second-, fourth-, and eight-order PCE approximations; and (3) quadratic SCE approximations with three element sizes: $h=1/2$, $h=1/4$, and $h=1/8$. In SCE calculations, there are even numbers of elements for the chosen meshes with repeated central knots ($x_k=0.5$) in each coordinate direction.  Although the basis functions and corresponding expansion coefficients of SCE and PCE approximations were calculated exactly, there is no analytical means to determine their probability distributions.  Instead, the PCE and SCE approximations once built were re-sampled to generate their associated distribution functions.  The sample size for both crude MCS and re-sampling is 10,000, which should be adequate for examining the tail probabilistic characteristics up to a probability of $10^{-3}$.  Compared with the MCS result, the convergence of probability distributions by the SCE approximations in Figure \ref{fig7}(b) is markedly faster than that by the PCE approximations in \ref{fig7}(a).  It appears that low-order SCE approximations also yield more accurate estimates of the probability distributions than a high-order PCE approximation for nonsmooth functions.

\begin{figure}[htbp!]
\begin{centering}
\includegraphics[scale=0.75]{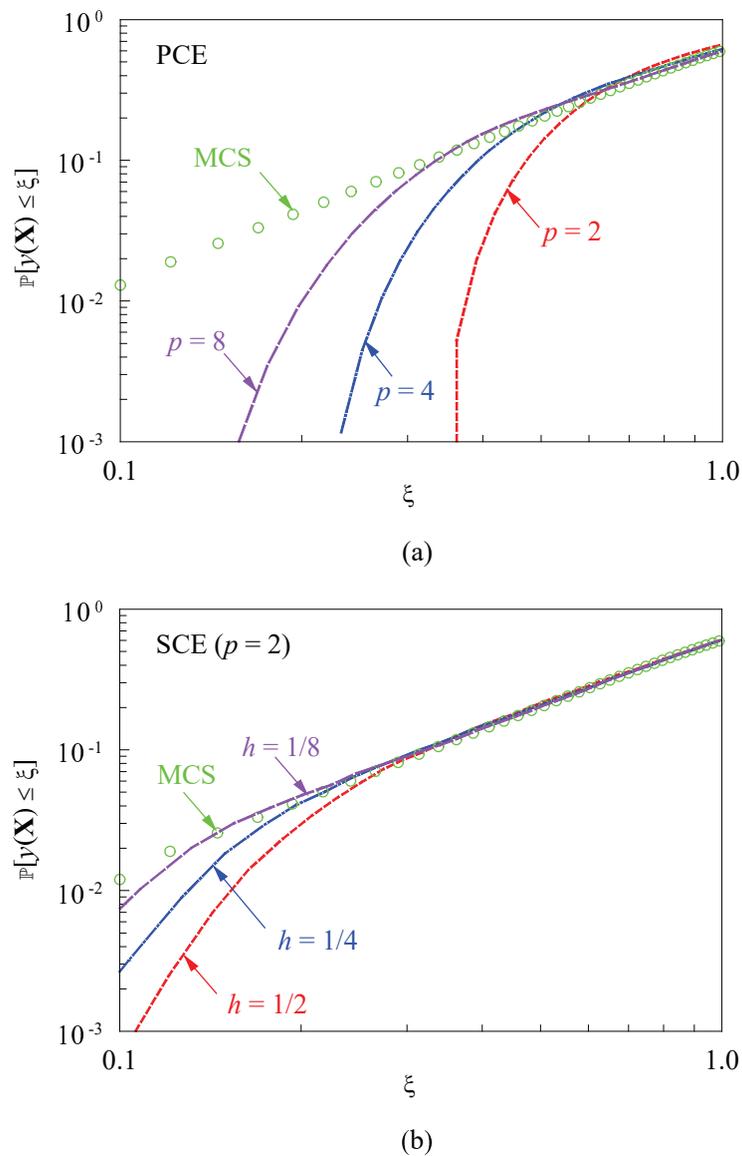}
\par\end{centering}
\caption{Probability distributions of $y(\mathbf{X})$ calculated by three distinct methods;
(a) crude MCS and several PCE approximations;
(b) crude MCS and several SCE approximations.}
\label{fig7}
\end{figure}

\section{Discussion}
While the paper is aimed at fundamental mathematical development of SCE, a brief deliberation on the practical significance of the work is justified.  First, the success of SCE is dependent on its effective implementation for UQ analysis of a general computational model.  For more realistic problems not considered here, the expansion coefficients of SCE approximations cannot be calculated exactly.  In this regard, computationally efficient methods or techniques for estimating the expansion coefficients are direly needed. Given the proliferation of the coefficients, the importance of such a need cannot be overstated. Methods, such as dimension-reduction techniques \cite{xu04} and sparse-grid quadrature \cite{gerstner98}, including a few regression-based approaches used in the PCE community, come to mind.  Some of these methods, when appropriately adapted, may potentially aid in calculating the SCE coefficients economically.

Second, the SCE approximation proposed is designed to account for locally prominent and highly nonlinear stochastic responses, including discontinuity and nonsmoothness, emanating from multiple failure modes of complex systems. On the contrary, if the response is smooth and moderately nonlinear, then existing PCE equipped with globally supported basis is adequate.  In the latter case, there is no significant advantage of an SCE approximations over a PCE approximation.

Third, and more importantly, the use of tensor-product structure to form multivariate B-splines is not always suitable.  Indeed, for high-dimensional UQ problems, tensor-product expansions in the context of SCE or PCE approximations will require an astronomically large number of terms or coefficients, succumbing to the curse of dimensionality. Therefore, developments of alternative computational methods capable of exploiting low effective dimensions of high-dimensional functions, \emph{ \`{a} la} dimensional decomposition methods \cite{rahman18}, are desirable.

These topics are subjects of current research in the author's group.

\section{Conclusion}
A new chaos expansion, namely, SCE of a square-integrable random variable, comprising measure-consistent multivariate orthonormal B-splines in independent random variables, is unveiled.  Under prescribed assumptions, a whitening transformation is proposed to decorrelate univariate B-splines in each coordinate direction into their orthonormal version.  The transformed set of B-splines was proved to form a basis of a general spline space comprising splines of specified degree and knot sequence.  Through a tensor-product structure, multivariate orthonormal B-splines were constructed, spanning the space of multivariate splines of specified degrees and knot sequences in all coordinate directions.  The result is an expansion of a general $L^2$-function with respect to measure-consistent multivariate orthonormal B-splines.  Compared with the existing PCE, SCE, rooted in compactly supported B-splines, deals with locally prominent stochastic responses in a more proficient manner. The approximation quality of the expansion was demonstrated in terms of the modulus of smoothness of the function, leading to the mean-square convergence of SCE to the correct limit. The weaker modes of convergence, such as those in probability and in distribution, follow readily. The optimality of SCE, including deriving PCE as a special case of SCE, was demonstrated.  Analytical formulae akin to those found in the PCE literature are proposed to calculate the mean and variance of an SCE approximation for a general output variable in terms of the expansion coefficients. Numerical results obtained for one-, two-, and four-dimensional UQ problems entailing oscillatory, nonsmooth, and nearly discontinuous functions indicate that a low-order SCE approximation with an adequate mesh is capable of producing a substantially more accurate estimates of the output variance and probability distribution than a PCE with an overly large order of approximation.

\section*{{\small Acknowledgments}}
The author thanks two anonymous reviewers and the associate editor for providing a number of helpful  comments on an earlier draft of the paper.

\bibliographystyle{siamplain}
\bibliography{sce_references}

\end{document}